
\documentclass[fullpage]{article}

\usepackage{amsmath,amsthm,amsfonts,amssymb,amscd,epsf,epsfig,psfrag,enumerate, tikz,bbold}
\usepackage[linesnumbered]{algorithm2e}

\def\intt{\operatorname{int}}
\def\cl{\operatorname{cl}}
\def\cone{\operatorname{cone}}

\def\vol{\operatorname{vol}}

\def\epi{\operatorname{epi}}
\DeclareMathOperator*{\argmin}{argmin}
\DeclareMathOperator*{\argmax}{argmax}

\def\w{\omega}
\def\R{\mathbb{R}}\def\Z{\mathbb{Z}}\def\N{\mathbb{N}}

\def\T{\mathsf{T}}
\def\C{\mathcal{C}}
\def\F{\mathcal{F}}

\def\d{\,\mathrm{d}}

\newtheorem{theorem}{Theorem}
\newtheorem{lemma}[theorem]{Lemma}

\newtheorem{definition}[theorem]{Definition}
\newtheorem{remark}[theorem]{Remark}
\newtheorem{corollary}[theorem]{Corollary}
\newtheorem{conjecture}[theorem]{Conjecture}

\newtheorem{claim}[theorem]{Claim}

\title{Centerpoints: A link between optimization and convex geometry}
\author{Amitabh Basu \and Timm Oertel}
 
\begin{document}

\maketitle

\begin{abstract}
  We introduce a concept that generalizes several different notions of a ``centerpoint'' in the literature. We develop an oracle-based algorithm for convex mixed-integer optimization based on centerpoints. Further, we show that algorithms based on centerpoints are ``best possible'' in a certain sense. Motivated by this, we establish several structural results about this concept and provide efficient algorithms for computing these points. Our main motivation is to understand the  complexity of oracle based convex mixed-integer optimization.
\end{abstract}

\section{Introduction}\label{sec:intro}
Consider the following unconstrained optimization problem
\begin{equation}\label{pro:ConvexMixedIntegerOpt}
\min\limits_{(x,y)\in\Z^{n}\times\R^d} g(x,y),
\end{equation}
where $g:\R^n\times\R^d\to\R$ is a convex function. To keep the problem as general as possible, we assume that $g$ can be accessed only by a first-order evaluation oracle. In other words, when queried at a point $(x,y)$, the oracle returns the corresponding function value $g(x,y)$ and an element from the subdifferential $\partial g(x,y)$. This allows us to model very general, possibly non-smooth, convex functions. The only additional assumption we make to keep \eqref{pro:ConvexMixedIntegerOpt} tractable, is that the minimization problem is bounded.

We present a general cutting plane algorithm based on the concept of {\it centerpoints}, which we define below.
We call it the {\it centerpoint algorithm}.
Our approach bears similarities to a number of continuous convex minimization algorithms and to Lenstra-type algorithms \cite{Lenstra83,GroetschelLovaszSchrijver-Book88} for convex integer optimization problems. Most variations of Lenstra-type algorithms rely on a combination of the ellipsoid method and enumeration on lower dimensional subproblems \cite{Kannan87,Heinz05,MicciancioVoulgaris2010,hildebrand2013new,OertelWagnerWeismantel14,Dadush12}. The key difference is that our algorithm avoids enumerating low dimensional subproblems.

The main feature of this approach is that, from the point of view of the number of function oracle calls, this algorithm is best possible, up to a lower order factor. For this purpose, we present our results for a somewhat general convex optimization problem (see Section~\ref{sec:center-opti} for details), and then specialize the results to continuous/integer/mixed-integer convex optimization. We now proceed to the central concept of this paper.

\subsection*{Centerpoints} Let $\mu$ be a Borel-measure\footnote{{  A reader  unfamiliar with measure theory may simply consider $\mu$ to be the volume measure or the mixed-integer measure on the mixed-integer lattice, i.e., $\mu(C)$ returns the volume of $C$ or the ``mixed-integer volume'' of the mixed-integer lattice points inside $C$. 
The ``mixed-integer volume'' reduces to the number of integer points when the lattice is the set of integer points. See~\eqref{eq:mixed} for a precise definition which generalizes both the standard volume and the ``counting measure'' for the integer lattice.}} on $\R^n$ such that $0<\mu(\R^n)<\infty$. Without any loss of generality, we normalize the measure to be a probability measure, i.e., $\mu(\R^n)=1$. For $S\subseteq \R^n$ nonempty and closed, we define the set of {\em centerpoints} $\C(S,\mu)\subseteq S$ as the set that attains the following maximum
\begin{equation}\label{eq:max-value}\F(S,\mu):=\max_{x\in S}\inf_{u\in \mathcal{S}^{n-1}}\mu(H^+(u,x)),\end{equation}
where $\mathcal{S}^{n-1}$ denotes the $(n-1)$-dimensional unit sphere with center in the origin and $H^+(u,x)$ denotes the half-space $\left\{y\in\R^n : u \cdot (y-x)\ge 0 \right\}$. In other words, $\F(S,\mu)$ is the largest real number $M > 0$, such that there is a point $x \in S$ 
with the property that any halfspace containing $x$ has measure at least $M$. 
This definition unifies several definitions from convex geometry, computer science and statistics. Two notable examples are:

\begin{enumerate}
\item {\em Winternitz measure of symmetry.} Let $\mu$ be the Lebesgue measure restricted to a  convex body $K$ {   (i.e., $K$ is compact and has a non-empty interior)}, or equivalently, the uniform probability measure on $K$, and let $S=\R^n$. $\F(S,\mu)$ in this setting is known in the literature as the {\em Winternitz measure of symmetry} of $K$, and the centerpoints $\mathcal{C}(S,\mu)$ are the ``points of symmetry'' of $K$. This notion was studied by Gr\"unbaum in~\cite{Gruenbaum1960} and surveyed by the same author in~\cite{grunbaum1963measures}. Caplin and Nalebuff~\cite{caplin198864} generalize Gr\"unbaum's results to measures $\mu$ with a concave density supported on a compact set $K$. \cite{toth2015measures} is a recent survey on measures of symmetry of convex bodies. Convex geometry literature also studies the closely related concepts of floating bodies and illumination bodies; see~\cite{werner2006floating} for a survey.

\item {\em Tukey depth and median.} In statistics and computational geometry, the function $f_\mu:\R^n \to \R$ defined as
\begin{equation}\label{eq:f}f_\mu(x) :=\inf_{u \in \mathcal{S}^{n-1}} \mu(H^+(u,x)) \end{equation} 
is known as the {\em halfspace depth function} or the {\em Tukey depth function} for the measure $\mu$, first introduced by Tukey~\cite{tukey1975mathematics}. Taking $S =\R^n$, the centerpoints $\mathcal{C}(\R^n,\mu)$ are known as the {\em Tukey medians} of the probability measure $\mu$, and $\F(\R^n,\mu)$ is known as the maximum {\em Tukey depth} of $\mu$. Tukey introduced the concept when $\mu$ is a finite sum of Dirac measures (i.e., a finite set of points with the counting measure), but the concept has been generalized to other probability measures and analyzed from structural,~\cite{donoho1992breakdown,rousseeuw1999depth,koshevoy2002tukey}, 
as well as computational perspectives~\cite{rousseeuw1999depth,chan2004optimal,bremner2008output,dyckerhoff-mozharovski}. 
See~\cite{mosler2013depth} for a survey of structural aspects and other notions of ``depth'' used in statistics, and~\cite{dyckerhoff-mozharovski} and the references therein for a survey and recent approaches to computational aspects of the Tukey depth. 
%
%
%
\end{enumerate}

\subsection*{Our Results\footnote{Earlier versions of this paper from {\tt arxiv.org} and {\em IPCO 2016} contained results about the uniqueness of the centerpoint. We were recently made aware by an anonymous referee that such uniqueness results already existed in the convex geometry literature~\cite[Proposition 1]{werner2006floating}. While our uniqueness result had been obtained independently and without knowledge of these prior results, the proof ideas used by us were quite similar to the existing proof. Consequently, we do not find any value in including these results in this version of the paper.}}
To the best of our knowledge, all related notions of centerpoints in the literature have only considered the case where 
the set $S$ is $\R^n$, i.e., the centerpoint can be any point from the Euclidean space. We consider more general $S$, as this allows us to analyze convex optimization problems where the solutions have to satisfy side constraints like mixed-integer constraints, sparsity constraints (e.g. compressed sensing), or complementarity constraints. Essentially, the closed subset $S$ is going to represent non-convex feasibility constraints of our optimization problem; at the most general level, all we require from $S$ is that it is closed and nonempty.

In Section~\ref{sec:center-opti}, we elaborate on this connection between centerpoints and algorithms for optimizing convex functions over general closed sets $S$. We first give an algorithm for solving such problems given access to first-order (separation) oracles, based on centerpoints. We then focus on convex {\em mixed-integer} optimization and show that the centerpoint algorithms is ``best possible'' in a certain sense, amongst a large class of first-order oracle based methods -- see Table~\ref{fig:M1}. This comprises our main motivation in studying centerpoints. 

In Section~\ref{sec:general-properties}, we provide lower bounds on the value of $\F(S,\mu)$. In Section~\ref{sec:general:thm:helly}, we obtain lower bounds in terms of the Helly number of $S$ with minimal assumptions on $S$ and $\mu$. In Section~\ref{sec:new-structure}, we obtain better lower bounds for the special case when $S = \Z^n\times\R^d$ and $\mu$ is the ``mixed-integer" uniform measure on $K \cap (\Z^n\times\R^d)$, where $K$ is some convex body. Such bounds immediately imply bounds on the complexity of oracle-based convex mixed-integer optimization algorithms.

%

In Section~\ref{sec:algorithms}, we present a number of exact and approximation algorithms for computing centerpoints. To the best of our knowledge, the computational study of centerpoints has only been done for measures $\mu$ that are a finite sum of Dirac measures, i.e., for finite point sets, or when $\mu$ is the uniform measure on two dimensional polygons (e.g. see \cite{brass2003computing} and the references therein). We initiate a study for other measures; in particular, the uniform measure on a  convex body, the counting measure on the integer points in a  convex body, and the mixed-integer volume of the mixed-integer points in a  convex body. All our algorithms are exponential in the dimension $n$ but polynomial in the remaining input data, so these are polynomial time algorithms if $n$ is assumed to be a constant. Algorithms that are polynomial in $n$ are likely to not exist because of the reduction to the so-called {\em closed hemisphere problem} -- see Chapter~7 by Bremner, Fukuda and Rosta in the collection of articles in~\cite{liu2006data}.

We mention that the algorithms for computing centerpoints from Section~\ref{sec:algorithms} are based on the standard Turing machine model of computation and, therefore, work with rational arithmetic. Consequently, since the coordinates of a centerpoint could be irrational, our algorithms return points whose coordinates are arbitrary close approximations of the centerpoint coordinates. We alert the reader that our analysis of the oracle complexity of cutting plane algorithms in Section~\ref{sec:center-opti} ignores such arithmetic issues and the results assume that exact centerpoints are used in the {\em optimization algorithm}. A general framework for handling such arithmetic issues is described in~\cite{GroetschelLovaszSchrijver-Book88}. For this reason, we do not discuss them further in this manuscript.

Throughout this paper we use the notation $H^-(u,x):=\{y\in\R^n : u\cdot(y-x)\le0\}$ and $H(u,x):=\{y\in\R^n : u\cdot(y-x)=0\}$ for $u\in \mathcal{S}^{n-1}$ and $x\in\R^n$. Recall that $H^+(u,x)$ denotes the half-space $\left\{y\in\R^n : u \cdot (y-x)\ge 0 \right\}$. With $\intt(X)$ we will denote the interior of $X \subseteq \R^n$.



\section{The connection to optimization}\label{sec:center-opti} 
Given a nonempty, closed set $S \subseteq \R^n$, consider the following optimization problem:
\begin{equation}\label{pro:general-Convex-Opt}
\min\limits_{x \in S} g(x),
\end{equation}
where $g:\R^n\to\R$ is a convex function given by a first-order evaluation oracle. We first define the class of algorithms against which we will compare the centerpoint-algorithm. We refer to this class as {\em cutting plane algorithms}.



\begin{definition}[CUTTING PLANE ALGORITHM]\label{def:cutting-plane}
Let $S\subseteq \R^n$ be a nonempty, closed set and let $\nu$ be a Borel measure on $\R^n$ such that $\nu(\R^n\setminus S) =0$, i.e., $\nu$ is supported on $S$. A {\em cutting plane algorithm} for~\eqref{pro:general-Convex-Opt} with {\em stopping criterion based on $\nu$} is an algorithm with the following structure:

\begin{quote}
{\bf INPUT:} An error guarantee $\delta>0$ and a convex 
set $E_0$ that contains the optimal solution in its interior and with $\nu(E_0) > 0$.

{\bf ITERATIONS:} At each iteration $i= 1, 2 \ldots$ the algorithm selects a point $x_i \in \intt (E_{i-1})\cap S$, and then makes a call to a first-order oracle for $g$ at $x_i$, which returns the function value $g(x_i)$ and a subgradient $h_i \in \partial g(x_i)$. We define $x^\star:=\argmin_{x \in \{x_1, \ldots, x_i\}}g(x)$ and define $E_i$ such that
\begin{equation}\label{Alg:choice:Supset}E_{i}\;\supseteq\;\{ x \in E_0 :   g(x^\star)-g(x_j)\ge h_j \cdot (x-x_j),\; \forall j=1,\dots,i \}.
\end{equation}

{\bf OUTPUT:} The cutting plane algorithm stops at the $N$-th iteration when $\nu(\intt(E_{N})) \leq \delta$ and returns $x^\star$.
\end{quote}
\end{definition}


When $S = \R^n$ and $\nu$ is the standard Lebesgue measure, we obtain the standard cutting plane algorithms for continuous convex optimization, such as the Ellipsoid method, Method of centers of gravity, Kelly method or the Level method \cite[Section 3]{Nesterov-Book04}.
A variant of the cutting plane method which utilizes random sampling was explored by Bertsimas and Vempala in~\cite{BertsimasVempala04} for continuous convex optimization. Their method also falls under the general framework of Definition~\ref{def:cutting-plane}.
When $S = \Z^n$ and $\nu$ is the counting measure for $\Z^n$, we obtain cutting plane algorithms for convex integer optimization problems. When $S = \Z^n \times \R^d$, we obtain algorithms for convex mixed-integer optimization. A natural choice of the measure is the mixed-integer measure $\bar\mu_{mixed}$, which we will discuss in the next section.

\begin{remark} We give a short justification of our stopping criterion in Definition~\ref{def:cutting-plane}. Given general convex functions and the absence of any known structure for $S$, one can only guarantee an approximate solution from an algorithmic point of view in general (for structured $S$ like $\Z^n$, the situation is different). Typically, the quality of such approximations is then quantified by an additive or multiplicative gap with respect to the optimal function value. These often require additional estimations based on further parameters, for example, a Lipschitz constant.
Instead of considering the gap in the function values, we approximate an optimal point and we quantify the approximation quality by an appropriate measure $\nu$, thus circumventing any additional assumptions or sources of error such as Lipschitz constants. 
At the end of this section, we elaborate on how to extend our results to derive bounds on the additive gap with respect to the optimal function value in the mixed-integer case.
\end{remark}
 
\begin{remark} We also briefly comment on the assumption that $\nu(\R^n\setminus S) = 0$, i.e., the measure is supported on $S$. One could consider more general measures that do not satisfy this condition and analyze the class of algorithms obtained thus. However, the mathematical analysis of this more general situation becomes more tedious with many corner cases that need to be handled, without adding any new insight. It is more elegant to restrict the analysis to measures that are supported on $S$. Consequently, we build this into the definition of our cutting plane algorithm.
\end{remark}

\begin{definition}[CENTERPOINT ALGORITHM]
Let $S\subseteq \R^n$ be a nonempty, closed set and let $\nu$ be a Borel measure on $\R^n$ such that $\nu(\R^n\setminus S)=0$. The {\em centerpoint algorithm} is a cutting plane algorithm for~\eqref{pro:general-Convex-Opt} that chooses $x_i \in \mathcal{C}(S,\nu_i)$ from the set of centerpoints, where $\nu_i$ is the measure $\nu$ restricted to $\intt(E_{i-1})$, and defines $E_i$ to be the right hand side of~\eqref{Alg:choice:Supset}.
\end{definition}

For our general bounds we need three parameters related to $S$ and $\nu$.

\begin{definition}\label{def:regularity} Let $S\subseteq \R^n$ be a nonempty, closed subset and $\nu$ be a measure on $\R^n$ that is finite for any bounded set. 
\begin{enumerate}[(i)]
\item 
For any bounded convex set $C \subseteq \R^n$ with $\nu(C)>0$, define $\nu_C$ as the normalized finite measure $\nu$ restricted to $C$. We define  $$c(S, \nu):= \inf_{\nu_C} \mathcal{F}(S, \nu_C).$$
\item We define the {\it degeneracy parameter} $$\chi(S,\nu):=\max_{y \in S} \min_{u\in\mathcal{S}^{n-1}}\nu(\{x\in \R^n : u\cdot x = u\cdot y\}).$$
\item We say that $\nu$ is lower semi-continuous if for every $\epsilon>0$ and every open set $A \subseteq \R^n$, there exists a closed set $A' \subseteq A$ such that $$\nu(\intt (A'))\ge\nu(A) - \epsilon.$$
\end{enumerate}
\end{definition}

In Theorem~\ref{thm:general:thm:helly}, we will show that $c(S, \nu) \ge h(S)^{-1}$, where $h(S)$ denotes the Helly number of $S$. However, for certain types of measures one can obtain stronger bounds, e.g., see Corollary~\ref{cor:continuous-bound}.
Note that the parameter $\chi$ is zero when $S$ corresponds to the whole space, a mixed-integer lattice or to sets obtained from sparsity constraints on the variables. However, if $S=\Z^n$ and $\nu$ corresponds to the counting measure, then $\chi(S,\nu)=1$.

Our first general result showing the asymptotic optimality of the centerpoint based algorithm amongst cutting plane algorithms for~\eqref{pro:general-Convex-Opt} is the following. 

\begin{theorem}\label{thm:general-optimality}[General optimality bounds]
Let $S$ be a nonempty, closed set. Let $\nu$ be a measure such that $\nu(\R^n\setminus S) = 0$, $\nu$ is finite for bounded sets, and $\nu$ is lower semi-continuous (as defined in part (iii) of Definition~\ref{def:regularity}). Further, assume that $c(S,\nu) > 0$. Let $\delta > 0$ and $E_0\subseteq \R^n$ with $ \nu(E_0) = V > 0$. The centerpoint algorithm makes at most $$\bigg\lceil{\log_{\frac{1}{1-c(S,\nu)}}\left(\frac{V}{\delta}\right)}\bigg\rceil$$ first-order oracle calls for any convex function $g$. Moreover, for any cutting plane based algorithm $\mathcal{A}$ with a  stopping criterion based on $\nu$, there exists a convex function $\hat g$ such that $\mathcal{A}$ will make at least $$\bigg\lceil\left(\log_{2}\left(\frac{V}{\delta + \chi(S,\nu)}\right)\right)\bigg\rceil - 1$$ first-order evaluation oracle calls to $\hat g$.
\end{theorem}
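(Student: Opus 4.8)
The plan is to handle the upper bound and the lower bound separately, as they require quite different ideas.

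For the upper bound, I would track the quantity $V_i := \nu(\intt(E_i))$ across iterations. At iteration $i$, the centerpoint algorithm picks $x_i \in \mathcal{C}(S,\nu_i)$, where $\nu_i$ is $\nu$ normalized on $\intt(E_{i-1})$. The subgradient inequality $g(x^\star) - g(x_i) \ge h_i \cdot (x - x_i)$ fails on an open halfspace through $x_i$ — more precisely, the set defined by~\eqref{Alg:choice:Supset} is contained in some $H^-(u,x_i)$ for a suitable $u$ (when $h_i \ne 0$; the case $h_i = 0$ means $x_i$ is already optimal and needs a separate trivial remark). Hence $\intt(E_i)$ loses at least the open halfspace $\{y : u\cdot(y - x_i) > 0\} \cap \intt(E_{i-1})$. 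By the definition of a centerpoint and of $c(S,\nu)$, the measure $\nu_{i}$ of any closed halfspace through $x_i$ is at least $\mathcal{F}(S,\nu_i) \ge c(S,\nu)$; I would need lower semi-continuity here to pass from the closed halfspace to its interior, i.e.\ to argue $\nu_i(\{y : u\cdot(y-x_i) > 0\}) \ge c(S,\nu) - \epsilon$ for arbitrary $\epsilon$, or a cleaner measure-theoretic argument that the open halfspace carries at least $c(S,\nu)$ of the mass after accounting for the hyperplane having mass zero (which is where $\chi$-type reasoning or lsc comes in). This yields $V_i \le (1 - c(S,\nu)) V_{i-1}$, so $V_N \le (1-c(S,\nu))^N V$, and the stopping criterion $V_N \le \delta$ is met once $N \ge \log_{1/(1-c(S,\nu))}(V/\delta)$, giving the ceiling bound.

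For the lower bound, I would use an adversary/resisting-oracle argument. Against any cutting plane algorithm $\mathcal{A}$, the adversary maintains a convex region that must still contain the minimizer and answers each query $x_i$ by returning a subgradient $h_i$ that cuts off as little measure as possible — specifically, it picks the direction $u_i$ achieving the min in the definition of $\chi(S,\nu)$ at $y = x_i$, or more simply a halfspace through $x_i$ that the remaining feasible set can be taken to lie in, so that one query removes at most "half plus the hyperplane mass" — i.e.\ at each step the $\nu$-measure of the maintained set drops by at most a factor of $2$, plus an additive $\chi(S,\nu)$ coming from the degenerate hyperplane mass. Concretely, I would fix a direction, bisect $E_0$ by successive parallel hyperplanes through the queried points, keeping the larger half (measure $\ge \frac12(\text{current} - \chi)$), and at the end build an explicit convex $\hat g$ (e.g.\ a linear function, or a max of linear functions consistent with all the returned subgradients) whose true minimizer lies in the surviving region, so the algorithm cannot have certified $\nu(\intt(E_N)) \le \delta$ until the surviving measure has dropped below $\delta$. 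Unrolling the recursion $V_i \ge \frac12 V_{i-1} - \frac12\chi$ (equivalently $V_i + \chi \ge \frac12(V_{i-1} + \chi)$) gives $V_N + \chi \ge 2^{-N}(V + \chi) \ge 2^{-N} V$, forcing $2^{-N} V \le \delta + \chi$, i.e.\ $N \ge \log_2(V/(\delta+\chi)) - 1$ after the last step is accounted for, matching the claimed bound.

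The main obstacle I anticipate is the lower bound: one must actually exhibit a single convex function $\hat g$ that is simultaneously consistent with all the oracle answers the adversary committed to — the answers $h_i \in \partial \hat g(x_i)$ must be realizable by one convex function, and its minimizer must provably lie in the final surviving set so that the stopping criterion was genuinely not satisfiable earlier. Getting the hyperplane-mass bookkeeping exactly right so that the additive loss per step is $\chi(S,\nu)$ (and not, say, $2\chi$) — and in particular handling the interplay between $\intt(E_i)$ in the stopping criterion and the closed sets the adversary manipulates — is the delicate part; the role of $\chi$ is precisely to absorb the worst case where the algorithm's chosen points sit on heavy hyperplanes (as in $S = \Z^n$ with the counting measure), and the lsc hypothesis on $\nu$ is what makes the upper-bound side of this bookkeeping go through cleanly.
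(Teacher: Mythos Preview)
Your overall approach matches the paper's: geometric decay of $\nu(\intt(E_i))$ for the upper bound, and an adversary argument for the lower bound that halves the surviving mass at each step modulo the hyperplane mass $\chi(S,\nu)$, with the main difficulty being the construction of a single convex $\hat g$ consistent with all the adversary's answers. You have correctly located that obstacle.

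Two corrections are worth flagging. First, lower semi-continuity is used in the paper for the \emph{lower} bound, not the upper bound: the adversary, after seeing $x_{i+1}$, shrinks the current region $\widehat E_i$ to a slightly smaller closed set $\tilde E_i$ (losing at most $\epsilon$ in measure, by lsc) and places the new affine piece strictly below the previous function values on $\tilde E_i$; this is what guarantees $g_{i+1}(x_j)=g_i(x_j)$ and $\partial g_{i+1}(x_j)=\{h_j\}$ for $j\le i$, so the algorithm's trajectory is unchanged. Without this perturbation device your adversary's piecewise-linear $\hat g$ may have the new facet touching earlier query points, altering their subgradients and invalidating the replay. Second, the ``fix a direction, bisect by parallel hyperplanes'' simplification does not work: the algorithm chooses $x_{i+1}$ anywhere in $\intt(E_i)\cap S$, and the adversary must return a subgradient at that point, so the separating hyperplane must pass through $x_{i+1}$; the paper instead picks, at each step, a direction through $x_{i+1}$ achieving the $\chi$-minimum and then scales it down to satisfy the consistency condition (c). With these two fixes your recursion $V_i+\chi\ge\tfrac12(V_{i-1}+\chi)$ (plus the $\epsilon$ loss) recovers the paper's bound.
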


\begin{proof}
The upper bound follows from the fact that by choosing the centerpoint at every iteration, one can guarantee that $\nu(\intt(E_i)) \leq (1 - c(S,\nu))\nu(\intt(E_{i-1}))$ for every $i=1, \ldots, N$, where $N$ is the number of iterations such that $\nu(\intt(E_{N})) \leq \delta$.

For the lower bound, it suffices to establish the following claim whose proof appears below. \begin{claim}\label{claim:lower-bound}For any cutting plane algorithm $\mathcal{A}$ and a real number $\epsilon > 0$, there exists a sequence of convex functions $\{g_i\}_{i=1}^\infty$ such that for every $i \geq 1$, if $\mathcal{A}$ runs for $i$ iterations on $g_i$, then $\nu(\intt(E_{i})) \geq (\frac12)^{i} \nu(\intt(E_0)) - 2(\epsilon - \chi(S,\mu))$.\end{claim} To obtain the lower bound stated in the theorem from this, we do the following: Given $\delta>0, V>0$, set $N^* := \bigg\lceil\left(\log_{2}\left(\frac{V}{\delta + \chi(S,\nu)}\right)\right)\bigg\rceil - 1$ and we run $\mathcal{A}$ on $g_{N^*}$. Using Claim~\ref{claim:lower-bound} with $\epsilon = \frac{\delta}{2}$, we obtain that $\nu(\intt(E_{N^*})) \geq (\frac12)^{N^*} \nu(\intt(E_0)) - \delta - 2\chi(S,\mu))$. Since the stopping criterion for the algorithm is $\nu(\intt(E_{N^*})) \leq \delta$, the inequality implies that $\mathcal{A}$ requires at least $N^*$ iterations to stop.
%
%
%
%
%
%
%
\begin{proof}[Proof of Claim~\ref{claim:lower-bound}]\renewcommand{\qedsymbol}{$\diamond$} We construct the sequence $\{g_i\}_{i=1}^\infty$ in an adversarial manner; we will actually construct $\epi(g_i)$, the epigraphs of $g_i$\footnote{For any function $f:\R^n \to \R$, the epigraph of $f$ is $\epi(f) := \{(x,t)\in \R^n \times \R: t \geq f(x)\}.$}, and use the fact that an epigraph defines a convex function uniquely and vice versa. 

%

In fact, we will inductively construct three sequences: convex functions $\{g_i\}_{i=0}^\infty$, vectors $\{h_i\}_{i=1}^\infty \subseteq \R^n$, and real numbers $\{\xi_i\}_{i=1}^\infty \subseteq \R$ such that the following conditions hold for every $i\geq 1$.
\begin{itemize}
\item[(i)] If algorithm $\mathcal{A}$ runs on the function $g_i$ for $i$ iterations, $$\nu(\intt(E_{i})) \geq (\frac12)^{i}\nu(\intt(E_0)) - (\sum_{j=0}^{i-1}\frac{1}{2^j})\cdot(\epsilon + \chi(S,\nu)),$$
\item[(ii)] For any $0 \leq j \leq i$, when the algorithm is executed for $j$ iterations on $g_j$ and $g_i$, it queries the same points $x_1, \ldots, x_j$.
\item[(iii)] For any $0 \leq j \leq i$, $\{h_j\} = \partial g_i(x_j)$,
\item[(iv)] Let $x_1, \ldots x_i$ be the points queried by $\mathcal{A}$ when executed on $g_i$ for $i$ iterations. Then,
$$\epi(g_i) = \bigcap_{j=1}^{i}\{(x,t): E_0 \times \R: t \geq h_j\cdot(x-x_j) - \sum_{k=1}^j\xi_k \}$$
\end{itemize}

First observe that (i) of the claim implies $\nu(\intt(E_{i})) \geq (\frac12)^{i}\nu(\intt(E_0)) - (\sum_{j=0}^{i-1}\frac{1}{2^j})\cdot(\epsilon + \chi(S,\nu))\geq (\frac12)^{i} \nu(\intt(E_0)) - 2(\epsilon + \chi(S,\mu))$. This would complete the proof of Claim~\ref{claim:lower-bound}.

We prove the claim inductively. Let $x_1 \in \intt (E_0)$ be the first point queried by $\mathcal{A}$ on any convex function. Choose $h_1\in\R^{n}\setminus \{0\}$ 
such that $\nu(\{x\in \R^n : h_1\cdot x = h_1\cdot x_1\})\le\chi(S,\nu)$, and that $\nu(\intt(E_{0})\cap \{x : h_1\cdot(x - x_1) \leq 0\})\ge\frac{1}{2}\nu(\intt(E_{0}))$. Such a choice of $h_1$ always exists. Set $\xi_1 = 0$.
 Finally, define $\epi(g_1):=\{(x,t): E_0 \times \R: t \geq h_1\cdot(x-x_1) \}.$ One can now check that (i)-(iv) in the above claim hold for $i=1$.

Now, suppose we have defined $g_1, \ldots, g_{i}$, $h_1, \ldots, h_{i}$ and $\xi_1, \ldots, \xi_{i}$ for some $i\geq 1$ such that (i)-(iv) hold. We now construct $g_{i+1}, h_{i+1}$ and $\xi_{i+1}$. Note that by (iii), the algorithm chooses $x_{i+1} \in \intt(E_{i})$ where $E_i \supseteq \widehat E_{i}:= \{ x \in E_0 :  h_j \cdot (x-x_j) \leq -\sum_{k=j+1}^i \xi_k,\; j=1,\dots,i \}$. 
If $x_{i+1} \not\in \intt(\widehat E_{i})$, then set $g_{i+1} = g_{i}$, $h_{i+1} \in \partial g_i(x_{i+1})$ (which is well-defined) and $\xi_{i+1} = 0$. Otherwise, by our assumption that $\nu$ is lower semi-continuous, one can choose $\xi_{i+1}>0$
, such that following conditions hold for $\tilde E_i:=\{ x \in E_0 :  h_j \cdot (x-x_j) \leq -\sum_{k=j+1}^{i} \xi_k - \xi_{i+1},\; j=1,\dots,i \}$:
\begin{itemize}
\item[(A)] $\nu(\intt(\widehat{E}_{i}))-\epsilon\le\nu(\intt(\tilde E_{i}))$ and
\item[(B)] $x_i\in\tilde E_{i}$.
\end{itemize}
Let $h_{i+1}\in\R^n\setminus\{0\}$ such that the following all hold:
\begin{itemize}
\item[(a)] $\nu(\{x\in \R^n : h_{i+1}\cdot x = h_{i+1}\cdot x_{i+1}\})\le\chi(S,\nu)$,
\item[(b)] $\nu(\intt(\widehat E_{i})\cap \{x\in\R^n : h_{i+1}\cdot(x - x_{i+1}) \leq 0\})\ge\frac{1}{2}\nu(\intt(\widehat E_{i}))$ and
\item[(c)] $\intt(\{(x,t): t \geq h_{i+1}\cdot(x - x_{i+1}) - \sum_{k=1}^{i+1}\xi_k)\}) \supseteq \widehat E_{i}\times \{\sum_{k=1}^i\xi_k\}.$
\end{itemize} 
Condition (a) can be ensured by the definition of $\chi(S,\nu)$, (b) can be ensured by choosing one of the closed halfspaces corresponding to the normal minimizing the degeneracy parameter $\chi(S,\nu)$, and (c) can finally be ensured by scaling down $h_{i+1}$ as required. Lastly, define \begin{equation}\label{eq:recursive-epi}\epi(g_{i+1}) = \epi(g_i) \cap \{(x,t): E_0 \times \R: t \geq h_{i+1}\cdot(x-x_{i+1}) - \sum_{k=1}^{i+1}\xi_k \}.\end{equation}
To confirm condition (i), we observe that $E_{i+1} \supseteq \widehat E_{i+1}$, and therefore, 
$$\begin{array}{rcl}\nu(\intt(E_{i+1})) & \geq & \nu(\intt(\widehat E_{i+1})) \\
& = &  \nu(\intt(\tilde E_{i} \cap H_{i+1}))\\
& \ge & \nu(\intt(\widehat E_{i} \cap H_{i+1})) - \epsilon \\
& \ge & \nu(\intt(\widehat E_{i})\cap H_{i+1}) - \chi(S,\nu) - \epsilon \\
& \ge & \frac{1}{2}\nu(\intt(\widehat E_{i}))  - \chi(S,\nu) - \epsilon\\
\end{array}$$
To verify condition (ii), by induction we simply need to verify that if $\mathcal{A}$ queries $x_1, \ldots, x_i$ on the first $i$ iterations while executing on $g_i$, then $g_i(x_k) = g_{i+1}(x_k)$ for all $k = 1, \ldots, i$. This follows from condition (c) above that was maintained during the choice of $h_{i+1}$.
Condition (iii) also follows from condition (c) above that was maintained during the choice of $h_{i+1}$.
Condition (iv) follows from~\eqref{eq:recursive-epi} and the fact that inductively condition (ii) ensures that $\mathcal{A}$ queries the same points $x_1, \ldots, x_i$ on $g_i$ and $g_{i+1}$.
\end{proof}

This concludes the proof of Theorem~\ref{thm:general-optimality}.
\end{proof}

%
%

We obtain, as a special case, a known result of Yudin and Nemirovski~\cite{Nemirovski_Yudin_book} on the optimality of the centerpoint algorithm for continuous convex optimization:

\begin{corollary}\label{cor:continuous-bound}[Continuous convex optimization bounds]
The centerpoint algorithm is optimal amongst cutting plane algorithms for continuous convex optimization in terms of number of function oracle calls, upto the constant factor $\log_2(\frac{e}{e-1})$.
\end{corollary}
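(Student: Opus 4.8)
The plan is to specialize Theorem~\ref{thm:general-optimality} to the continuous setting $S = \R^n$ with $\nu$ the Lebesgue measure, and then to identify the base of the logarithm in the upper bound with $2$ (modulo the claimed constant factor) by pinning down the value of $c(\R^n, \nu)$. First I would observe that for $S = \R^n$ the degeneracy parameter $\chi(S,\nu)$ vanishes, since any hyperplane has Lebesgue measure zero, and $\nu$ is trivially lower semi-continuous; so Theorem~\ref{thm:general-optimality} applies cleanly and gives an upper bound of $\lceil \log_{1/(1-c(\R^n,\nu))}(V/\delta)\rceil$ oracle calls and a matching lower bound of $\lceil \log_2(V/\delta)\rceil - 1$ for every cutting plane algorithm.

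The crux is then to evaluate $c(\R^n, \nu) = \inf_{\nu_C} \mathcal{F}(\R^n, \nu_C)$, the infimum of the maximum Tukey depth over all normalized restrictions of Lebesgue measure to bounded convex bodies $C$. The key classical fact I would invoke here is Grünbaum's theorem on the Winternitz measure of symmetry: for any convex body $C \subseteq \R^n$, the centroid (center of gravity) of $C$ has the property that every halfspace through it captures at least an $\left(\frac{n}{n+1}\right)^n$-fraction of the volume of $C$, and this bound is tight (attained in the limit by simplices). Hence $\mathcal{F}(\R^n, \nu_C) \ge \left(\frac{n}{n+1}\right)^n \ge \frac{1}{e}$ for every $C$, so $c(\R^n,\nu) \ge 1 - \frac{1}{e} = \frac{e-1}{e}$, and therefore $\frac{1}{1-c(\R^n,\nu)} \le \frac{e}{e-1}$... wait, we need a lower bound on $1-c$, i.e. $1 - c(\R^n,\nu) \le \frac1e$, giving $\log_{1/(1-c)}(V/\delta) \le \log_{e}(V/\delta) = \frac{1}{\ln 2}\log_2(V/\delta) = \log_2(e)\,\log_2(V/\delta)$. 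Comparing this to the lower bound $\log_2(V/\delta) - 1$, the centerpoint algorithm uses at most a factor of roughly $\log_2 e = \log_2\!\left(\frac{e}{e-1}\right)^{-1}$... I would need to be careful with which constant comes out; the paper claims the factor $\log_2\!\bigl(\tfrac{e}{e-1}\bigr)$, so I would recompute using the sharper statement that $c(\R^n,\nu)\ge \tfrac{e-1}{e}$ only in the limit and argue the ratio of iteration counts is $\frac{\log_{1/(1-c)}(V/\delta)}{\log_2(V/\delta)} \to \log_2\!\bigl(\tfrac{1}{1-c}\bigr)^{-1}$, and plug in $1-c = \tfrac1e$ to get the stated constant after correctly orienting the fraction.

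The main obstacle I anticipate is bookkeeping rather than conceptual: matching the two bases and the additive $-1$ term so that the final ratio is cleanly $\log_2\!\bigl(\tfrac{e}{e-1}\bigr)$, and making sure the infimum in $c(\R^n,\nu)$ is exactly $1 - \tfrac1e$ (so that the Grünbaum bound is not only an upper bound on $1-c$ but also asymptotically the correct value, which is what is needed for the constant to be tight and not merely an upper estimate). I would also need to note explicitly that the centerpoint of a bounded convex $C$ under Lebesgue measure need not be the centroid but has at least the depth guaranteed by the centroid, which is all that is required. With these pieces in place the corollary follows by taking the ratio of the upper iteration bound of the centerpoint algorithm to the universal lower bound from Theorem~\ref{thm:general-optimality}, and recognizing the limiting constant as $\log_2\!\bigl(\tfrac{e}{e-1}\bigr)$.
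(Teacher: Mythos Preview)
Your approach is exactly the paper's: apply Theorem~\ref{thm:general-optimality} with $S=\R^n$, $\nu$ Lebesgue (so $\chi=0$), and invoke Gr\"unbaum's theorem to pin down $c(\R^n,\nu)$. The paper's proof is a two-line application of precisely these ingredients.

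However, you have an arithmetic slip that is worth flagging since it propagates through the rest of your writeup. From Gr\"unbaum you get $\mathcal{F}(\R^n,\nu_C)\ge\bigl(\tfrac{n}{n+1}\bigr)^n\ge\tfrac1e$, and hence $c(\R^n,\nu)\ge\tfrac1e$, \emph{not} $c\ge 1-\tfrac1e$ as you wrote. With the correct value, $1-c\le\tfrac{e-1}{e}$, so $\tfrac{1}{1-c}\ge\tfrac{e}{e-1}$, and the centerpoint upper bound becomes at most $\log_{e/(e-1)}(V/\delta)$. Comparing with the lower bound $\log_2(V/\delta)$ gives the ratio
\[
\frac{\log_{e/(e-1)}(V/\delta)}{\log_2(V/\delta)}=\frac{1}{\log_2\!\bigl(\tfrac{e}{e-1}\bigr)},
\]
which is the constant in the statement (the paper's phrasing ``up to the constant factor $\log_2(\tfrac{e}{e-1})$'' refers to this same comparison). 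Your subsequent attempt, where you write $1-c\le\tfrac1e$ and arrive at $\log_2 e$, stems from the same slip. Once you replace $c\ge 1-\tfrac1e$ by $c\ge\tfrac1e$ throughout, the bookkeeping you were worried about resolves cleanly.
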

\begin{proof} Grunba\"um showed that when $S = \R^n$ and $\nu$ is the Lebesgue measure, $c(S,\nu) = (\frac{n}{n+1})^n \geq \frac{1}{e}$~\cite{Gruenbaum1960}. Theorem~\ref{thm:general-optimality} then gives the result. \end{proof}

\begin{remark} The assumption $c(S,\nu) > 0$ in Theorem~\ref{thm:general-optimality} excludes pathological situations where the algorithm does not terminate. As an example, if $S = \mathcal{S}^{n-1}$ is the unit sphere, $\nu$ is the uniform measure supported on $S$ and $g = \|x\|_2$, then at every iteration of the algorithm, only a single point is excluded and no progress is made in terms of the measure even after countably many iterations.
\end{remark}

\subsection*{A more refined analysis for lattices and mixed-integer lattices}\label{sec:refined-mixed-integer}

In this section we consider the two cases $S=\Z^n$ and $S=\Z^n\times\R^d$ and where $\mu$ is the ``uniform measure'' on a convex set intersected with $S$.
 More precisely, let $K \subseteq \R^n\times \R^d$ be a convex set. Let $\vol_d$ be the $d$-dimensional volume (Lebesgue measure). We define the {\em mixed-integer volume with respect to $K$} as \begin{equation}\label{eq:mixed}\mu_{mixed,K}(C) := \frac{\sum_{z \in \Z^n} \vol_d(C \cap K  \cap (\{z\}\times \R^d))}{\sum_{z \in \Z^n} \vol_d(K  \cap (\{z\}\times \R^d))}\end{equation} for any Lebesgue measureable subset $C\subseteq \R^n \times \R^d$. For later use we want to introduce the notation $\bar\mu_{mixed}(C) = \sum_{z \in \Z^n} \vol_d(C  \cap (\{z\}\times \R^d))$. The dimensions $n$ and $d$ will be clear from the context. 

\begin{remark}\label{rem:minimum}
Let $K\subseteq\R^{n+d}$ be a convex body and let $\mu_{mixed,K}$ denote the mixed-integer volume with respect to $K$, as defined in~\eqref{eq:mixed}. Observe that, if $n=0$, then $\mu_{mixed,K}(H^+(u,x))$ is continuous in $u$. Thus, the infimum over the compact unit sphere is achieved.
When $n\neq0$ the function $\mu_{mixed,K}(H^+(u,x))$ remains continuous nearly everywhere on $\mathcal{S}^{n+d-1}$. Only on $\mathcal{S}^{n-1}\times\{0\}^d$ the function is piece-wise constant
In particular, this implies that the infimum in \eqref{eq:max-value} and \eqref{eq:f} is actually achieved.
\end{remark}

We will show below that when $S = \Z^n$, and $\nu$ is the counting measure on $\Z^n$, $c(S,\nu) = \frac{1}{2^n}$, see Corollary~\ref{cor:helly}. Note that when $S=\Z^n$, one can choose $\delta < 1$ in which case a cutting plane algorithm will return a true optimal solution because if $\nu(\intt(E_{N})) \leq \delta$, this means there is no integer point left in $E_{N}$ and thus $x^\star$ must be an optimal solution. 
To this point we have made no assumption on our initial $E_0$, except that $\nu(E_0)=V > 0$.
It is possible  to design  $E_0$ such that either the lower or the upper bound provided by Theorem~\ref{thm:general-optimality} are best possible. Examples would be $E_0=[0,B]\times\{0\}^{n-1}$ or $E_0=[0,B]\times[0,1]^{n-1}$ respectively, where $B\ge1$.
However, these are rather artificial constructions.
A more common assumption is that an optimal solution $\hat x$ has a bounded representation, say $\|\hat x\|_\infty\le B$ for some natural number $B\geq 1$.
This would imply that we initiate the algorithm with $E_0$ being a box. For ease of presentation we will assume that $E_0=[0,B)^n$, i.e., $E_0$ is not centrally symmetric to the origin.
It follows, that $\nu(E_0)$ equals $B^n$ and $B^{n+d}$ for the integer and the mixed-integer case respectively.
(Of course also other definitions of initial $E_0$'s are plausible, for example balls. In these cases one could also do a more refined analyses as described below. In case of the ball the bound would differ only by a linear factor in terms of the root  of the dimension. This is a consequence of the John's Ellipsoid Theorem.)
Then, the bound in Theorem~\ref{thm:general-optimality} says that the centerpoint based algorithm takes at most $$\left\lceil\log_{\frac{1}{1-c(S,\nu)}}\left(\frac{V}{\delta}\right)\right\rceil  = O \left( n2^n\log_{2}(B)\right)$$
function oracle calls,
where one uses the inequality $-\ln(1-x) \geq x$ for $0 < x < 1$ to deduce that $-\ln(1 - \frac{1}{2^n}) \geq \frac{1}{2^n}$. On the other hand, the lower bound in Theorem~\ref{thm:general-optimality} gives $$\bigg\lceil\log_{2}\left(\frac{V}{\delta}\right)\bigg\rceil -1 = \left\lceil n\log_2(B)\right\rceil -1. $$ This exponential gap between the upper and the lower bounds can be improved using the lattice structure of $S$.

\begin{theorem}\label{thm:pure-integer-bound}[Pure integer convex optimization bound] Let $S = \Z^n$, $\nu$ is the counting measure on $\Z^n$, $E_0=[0,B)^n$ where $B\ge2$ is an integer, and $\delta <1$. Then for any cutting plane algorithm $\mathcal{A}$ 
there exists an instance such that $\mathcal{A}$ makes at least $2^{n-1}\left(\lfloor \log_2(B)\rfloor+1\right)$ first-order evaluation oracle calls on $\hat g$.
 \end{theorem}

\begin{proof}
The proof follows the same idea as in the proof of Theorem~\ref{thm:general-optimality}, except that this time we exploit the discrete structure of $S$. The important thing to illustrate is the choice of the subgradents $\{h_i\}_{i=1}^\infty$ from that proof.

For each $v \in \{0,1\}^{n-1}$ we define the {\em fiber} $F_v := \{0,1,\ldots,B-1\}\times\{v\}$ and let $S_0 := \cup_{v \in \{0,1\}^{n-1}}F_v$. We now construct the adversarial $\hat g$ in an analogous manner as in the proof of Theorem~\ref{thm:general-optimality} by defining the adversarial sub-gradient halfspaces, or cuts. Whenever an algorithm queries the function oracle on a point $x\notin S_0$, then we can always choose the sub-gradient at $x$ such that the halfspace contains $S_0$. 
Otherwise, if $x\in S_0$, we know by definition that $x \in F_{\bar v}$ for some $\bar v \in \{0,1\}$. We now choose the sub-gradient halfspace that removes at most half of the remaining points in $F_{\bar v}$ and keeps the remaining fibers $F_v$, $v \neq \bar v$ intact. It then follows that on each of the $2^{n-1}$ fibers,  
the algorithm has to perform at least 
$\lfloor \log_2( B ) \rfloor + 1$
function oracle calls. Therefore, in all, the algorithm must perform at least $2^{n-1} \left( \lfloor \log_2( B ) \rfloor + 1 \right)$ function oracle calls. 
%
%
\end{proof}

For the mixed-integer case $S = \Z^n\times \R^d$ with the measure $\nu = \bar\mu_{mixed}$, we will show below that $c(S,\nu) \geq \frac{1}{2^n(d+1)}$, see Corollary~\ref{cor:helly}. Similar to the pure integer case above, assuming we start with $E_0$ as the box $[0,B)^n$ for some natural number $B\geq 2$, the bound in Theorem~\ref{thm:general-optimality} says that the centerpoint based algorithm takes at most $$\left\lceil\log_{\frac{1}{1-c(S,\nu)}}\left(\frac{V}{\delta}\right)\right\rceil = O \left( 2^n(d+1)\log_{2}\left(\frac{B^{n+d}}{\delta}\right)\right)$$
function oracle calls.
On the other hand, the lower bound in Theorem~\ref{thm:general-optimality} gives $$\bigg\lceil\log_{2}\left(\frac{V}{\delta}\right)\bigg\rceil = (n+d)\log_2\left(\frac{B}{\delta}\right).$$

However, similar to Theorem~\ref{thm:pure-integer-bound}, one can improve the lower bound in the mixed-integer case too:

\begin{theorem}\label{thm:mixed-integer-bound}[Mixed-integer convex optimization bound] 
Let $S = \Z^n\times\R^n$, and $\nu$ is the mixed integer measure on $S$. Then for any cutting plane algorithm $\mathcal{A}$ 
there exists an instance such that $\mathcal{A}$ makes at least $2^n\left(\log_{2}\left(\frac{B^d}{\delta }\right) +n-1 \right)$ 
first-order evaluation oracle calls on $\hat g$.
 \end{theorem}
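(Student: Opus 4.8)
The plan is to graft the fiber construction from the proof of Theorem~\ref{thm:pure-integer-bound} onto the adversarial epigraph machinery from the proof of Claim~\ref{claim:lower-bound}, exploiting that here the degeneracy parameter vanishes, $\chi(\Z^n\times\R^d,\bar\mu_{mixed})=0$. I would have the adversary pick the instance with $E_0$ a box whose interior meets exactly $2^n$ lattice fibers, each of full $d$-dimensional measure --- concretely $E_0:=[-\tfrac12,\tfrac32]^n\times[0,B)^d$ --- and, for $v\in\{0,1\}^n$, put
\[ G_v:=\{v\}\times[0,B)^d,\qquad S_0:=\bigcup_{v}G_v=E_0\cap(\Z^n\times\R^d), \]
so that $\bar\mu_{mixed}(G_v)=\vol_d([0,B)^d)=B^d$ for every $v\in\{0,1\}^n$, and every point the algorithm may query lies in $S_0$. (The box $[0,B)^{n+d}$ from the preceding discussion can be used instead when $B\ge 3$, via the fibers indexed by $\{1,2\}^n$, with the surplus lattice points handled as in Theorem~\ref{thm:pure-integer-bound}: they are either removed by additional oracle calls or they prevent termination, and in both cases the bound only grows.)

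The only ingredient beyond Claim~\ref{claim:lower-bound} is the choice of adversarial subgradient at a query $x_j\in G_{\bar v}$. I would take $h_j$ with two blocks: in the $\R^d$-block, a nonzero vector whose hyperplane through $x_j$ bisects the current slice $\{y:(\bar v,y)\in\intt(E_{j-1})\}$ into two pieces of equal $\vol_d$ --- such a direction exists because, for a convex body in $\R^d$, a hyperplane rotating about a fixed interior point realises every volume split by the intermediate value theorem; and in the $\Z^n$-block, a sufficiently large multiple of $\sum_{k}(2\bar v_k-1)e_k$, large enough that every other fiber $G_v$, $v\neq\bar v$, lies strictly inside the retained halfspace $H^-(h_j,x_j)$ --- possible precisely because the fibers are bounded in the $\R^d$-coordinates, so a steep enough tilt in the integer directions dominates. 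I would then check that $h_j$ meets conditions (a)--(c) in the proof of Claim~\ref{claim:lower-bound}: (a) holds since $H(h_j,x_j)$ has a nonzero $\R^d$-component, hence $\bar\mu_{mixed}(H(h_j,x_j))=0=\chi$; (b) is the bisection above, the untouched fibers merely over-satisfying the one-half condition; and (c) is secured by scaling $h_j$ down, which alters neither the halfspace $H^-(h_j,x_j)$ nor the bisection, exactly as before. The lower-semicontinuity shifts $\xi_k$ then contribute only an error $\epsilon'>0$ that can be made as small as desired, as in the proof of Theorem~\ref{thm:general-optimality}.

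With these choices, a query in $G_{\bar v}$ removes at most half the $\vol_d$-measure of the active slice of $G_{\bar v}$ and leaves every other fiber untouched; hence, writing $k_v$ for the number of queries that land in $G_v$ and $K:=\sum_{v}k_v$, after any number of iterations
\[ \bar\mu_{mixed}(\intt(E_i))\ \ge\ \sum_{v}B^d2^{-k_v}\ -\ \epsilon'\ \ge\ 2^nB^d\,2^{-K/2^n}\ -\ \epsilon', \]
the last step by convexity of $t\mapsto 2^{-t}$. Therefore the stopping condition $\bar\mu_{mixed}(\intt(E_i))\le\delta$ cannot be reached until $K\ge 2^n\log_2\!\big(2^nB^d/(\delta+\epsilon')\big)$, and since $K$ is at most the number of oracle calls and $\epsilon'$ may be taken arbitrarily small (in terms of the target call count, as in Theorem~\ref{thm:general-optimality}), the algorithm is forced to make at least $2^n\big(\log_2(B^d/\delta)+n\big)$, in particular at least $2^n\big(\log_2(B^d/\delta)+n-1\big)$, first-order oracle calls on the adversarial function $\hat g$. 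I expect the main obstacle to be purely in the subgradient step: verifying that the $\R^d$-bisecting direction and the $\Z^n$-separating tilt genuinely combine into a single subgradient compatible with the running epigraph --- keeping $x_j$ on the cut, not disturbing the already-committed function values, and respecting (a)--(c) together with the choice of $\xi_k$ --- after which the fiberwise halving and the convexity estimate are routine.
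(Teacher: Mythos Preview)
Your proposal is correct and follows essentially the same approach as the paper: both decompose $E_0$ into $2^n$ fibers $\{v\}\times[0,B)^d$, have the adversary halve the queried fiber while protecting the others via a tilt in the integer directions, and then aggregate via a convexity argument (your Jensen inequality $\sum_v 2^{-k_v}\ge 2^n2^{-K/2^n}$ is exactly the paper's minimization of $-\log_2\prod_v\delta_v$ under $\sum_v\delta_v\le\delta$, after the change of variables $\delta_v=B^d2^{-k_v}$). The only presentational difference is that the paper black-boxes the per-fiber adversary by invoking Theorem~\ref{thm:general-optimality} on each fiber, whereas you re-derive it explicitly via the epigraph machinery of Claim~\ref{claim:lower-bound}; your version is more self-contained, the paper's more modular.
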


\begin{proof}
The proof is similar to the proof of Theorem~\ref{thm:pure-integer-bound}.
We can construct an adversarial function, that treats each fibre $F_v:=\{v\}\times[0,B)$, where $v\in\{0,1\}^{n}$, as separate $d$-dimensional continuous problem.

For all fibers $F_v$, let $\delta_v$ denote the measure of $E_N$ intersected with $F_v$.
By the stopping criteria, it must hold that $\sum_v \delta_v\le\delta$.
By Theorem~\ref{thm:general-optimality} we know that at least $\left(\log_{2}\left(\frac{B^d}{\delta_i }\right)\right) - 1$ function oracle calls must be performed on each $F_v$, and by choosing our sub-gradient halfspaces in the same way as in the proof of Theorem~\ref{thm:pure-integer-bound}, we obtain that the algorithm must make at least
$$N \ge \sum_{v \in \{0,1\}^n} \left(\left(\log_{2}\left(\frac{B^d}{\delta_v }\right)\right) - 1\right)=2^n(\log_2(B^d)-1)-\log_2\left(\prod_{v \in \{0,1\}^n} \delta_v\right)$$ function orcale calls.
Note that the last summand is minimized when $\delta_i=\frac{\delta}{2^n}$ for all $i=1,\ldots,2^n$.
Hence,
$$N \ge 2^n(\log_2(B^d)-1)-\log_2\left(\left(\frac{\delta}{2^n}\right)^{2^n}\right)=2^n\left(\log_{2}\left(\frac{B^d}{\delta }\right) +n-1 \right).$$

%
This completes the proof.
\end{proof}

We finish this section with a few remarks.
As it was already proven by Yudin and Nemirovsky \cite{Nemirovski_Yudin_book}, the centerpoint algorithm is optimal for the continuous case up to a constant factor.
For the pure integer case we could prove that our algorithm is optimal up to a linear factor in $n$.
For the mixed-integer case, if Conjecture~\ref{chap:centerPoints::conj:gruenbaum} would be true, we would have an upper bound of 
$${\bigg\lceil{\log_{{\frac{e}{e-\frac{1}{2^n}}}}\left(\frac{B^{n+d}}{\delta}\right)}\bigg\rceil}.$$
In particular, this would imply that the cutting plane algorithm, using centerpoints, is optimal for mixed-integer optimization, up to a linear factor only in $n$, which would nicely unify the continuous and discrete optimization theory. See Table~\ref{fig:M1}.

\begin{table}
\begin{center}
\small
\begin{tikzpicture}[scale=0.5]

\node at (0,1) {$\;$};

\node at (-1,0) {${S}$};
\node at (4.5,0) {\bf Upper Bound};
\node at (12,0) {\bf Lower Bound}; 

\node at (-1,-2) {${ \R^d}$};
\node at (4.5,-2) { $O\left(\log_{\frac{e}{e-1}}\left(\frac{B^d}{\delta}\right)\right)$ };
\node at (12,-2) {$\Omega\left(\log_{2}\left(\frac{B^d}{\delta}\right)\right)$};
\node at (18.8,-2) {\small{(Yudin \& Nemirovsky)}};

\node at (-1,-4) {${ \Z^n}$};
\node at (4.5,-4) { $O\left( n2^n \log_2(B) \right)$ };
\node at (12,-4) { $\Omega\left(2^{n}\log_2(B)\right)$ };

\node at (-1,-6) {${ \Z^n\times\R^d}$};
\node at (4.5,-6) { $O\left(2^n(d+1)\log_{2}\left(\frac{B^{n+d}}{\delta}\right)\right)$ };
\node at (12,-6) { $\Omega\left(2^n\left(\log_{2}\frac{B^d}{\delta }\right) \right)$ };

\end{tikzpicture}
\end{center}
\caption{Best bounds for the convex optimization problem~\eqref{pro:general-Convex-Opt} with box constraints.} \label{fig:M1}
\end{table}

Next, we want to point out that it is not difficult to generalize the cutting plane algorithm to the constrained optimization case:
\begin{equation*}\label{pro:ConvexMixedIntegerOpt-const}
\min\limits_{x\in\Z^{n}\times\R^d,\atop h(x)\le0} g(x).
\end{equation*}
where $g,h:\R^n\times\R^d\to\R$ are convex functions given by first-order oracles. However, it is crucial that the feasible domain has a reasonable sized measure, as otherwise it might be impossible to find any feasible point, let alone an approximate optimal point. Further, the algorithm can be extended to handle quasi-convex functions, if one has access to separation oracles for their sublevel sets.

Finally, note that for the purely discrete case, when $S=\Z^n$, we can guarantee to find the optimal point of~\eqref{pro:general-Convex-Opt}, provided we choose $\delta<1$.
Only when there are continuous variables, we need to talk about approximations.
Thus, let $S=\Z^n\times\R^d$ with $d\neq 0$.
We assume that for every fixed $x \in \Z^n$, $g(x,y)$ is Lipschitz continuous in the $y$ variables with Lipschitz constant $L$. 
Let $(\hat x,\hat y)\in\Z^n\times\R^{d}$ attain the optimal value $\hat g$ of Problem~\eqref{pro:general-Convex-Opt} and let $(x^\star,y^\star)$ be the best point that the cutting plane algorithm has returned with objective value $g^\star$.
By standard arguments, we can bound  $\bar\mu_{mixed}(E_k)$ from below as follows
\begin{align*}
\bar\mu_{mixed}(E_k) & \ge \bar\mu_{mixed}(\{(x,y)\in\Z^n\times\R^d : g((x,y)) - \hat g \le g^\star - \hat g\})\\
& \ge \bar\mu_{mixed}\left(\left\{ (\hat x,y)\in\{\hat x\}\times\R^d : \|(\hat x,\hat y) -(\hat x,y)\|_2 \le \frac{g^\star -\hat g}{L}\right\}\right) \\
& =\left( \frac{\hat g - g^\star}{L} \right)^d \kappa_d,
\end{align*}
where $\kappa_d$ denotes the volume of the $d$-dimensional unit-ball.
On the other hand, after $N$ iterations it holds that
$$\mu_{mixed}(E_N) \le \delta.$$
Thus, we can guarantee that the algorithm returns a point satisfying $g(x^\star,y^\star)-g(\hat x,\hat y)\le L \left(\frac{\delta}{\kappa_d}\right)^{\frac{1}{d}}$.

\section{Bounds on $\F(S,\mu)$}\label{sec:general-properties}

We first establish some analytic properties of $f_\mu$. This will justify the use of ``maximum'' in~\eqref{eq:max-value}, instead of a supremum. The goal of this section is to establish a bound on the quality of the centerpoints based on Helly numbers of $S$, which will be followed by a better lower bound when $S$ is the mixed-integer lattice.
We will denote the complement of a set $X$ by $X^c$.
We begin with a useful lemma.

\begin{lemma}\label{quasi-concave}
For any probability measure $\mu$, $f_\mu(x)$ defined in~\eqref{eq:f} is quasi-concave on $\R^n$ and upper semicontinuous. 
\end{lemma}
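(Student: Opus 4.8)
The plan is to prove the two assertions — quasi-concavity and upper semicontinuity of $f_\mu$ — separately, since they rely on different features of the halfspace depth function. Recall that $f_\mu(x) = \inf_{u \in \mathcal{S}^{n-1}} \mu(H^+(u,x))$, so $f_\mu$ is, by construction, an infimum of a family of functions indexed by directions $u$. The key observation is that, for each fixed $u$, the map $x \mapsto \mu(H^+(u,x))$ has good properties as a function of $x$: it is quasi-concave (indeed, something stronger), and it has a one-sided semicontinuity. Then I would use the standard facts that an infimum of quasi-concave functions is quasi-concave, and an infimum of upper semicontinuous functions is upper semicontinuous, to conclude.

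For quasi-concavity, I would fix a level $\lambda \in \R$ and show the superlevel set $L_\lambda := \{x \in \R^n : f_\mu(x) \ge \lambda\}$ is convex. Since $f_\mu(x) \ge \lambda$ iff $\mu(H^+(u,x)) \ge \lambda$ for every $u \in \mathcal{S}^{n-1}$, we have $L_\lambda = \bigcap_{u \in \mathcal{S}^{n-1}} \{x : \mu(H^+(u,x)) \ge \lambda\}$, so it suffices to show each set $\{x : \mu(H^+(u,x)) \ge \lambda\}$ is convex for fixed $u$. Here the crucial monotonicity fact is: if $u \cdot (x' - x) \le 0$, then $H^+(u,x') \supseteq H^+(u,x)$, hence $\mu(H^+(u,x')) \ge \mu(H^+(u,x))$. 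In other words, for fixed $u$, the function $t \mapsto \mu(H^+(u, x_0 + t u))$ is nonincreasing in $t$, and $\mu(H^+(u,x))$ depends on $x$ only through $u \cdot x$ and is nonincreasing in that quantity. Therefore $\{x : \mu(H^+(u,x)) \ge \lambda\}$ is of the form $\{x : u \cdot x \le c\}$ for some $c \in [-\infty, +\infty]$ (a halfspace, the whole space, or empty), which is convex. Intersecting over all $u$ gives convexity of $L_\lambda$, proving quasi-concavity.

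For upper semicontinuity, I would show each superlevel set $L_\lambda$ is closed, equivalently that each $\{x : \mu(H^+(u,x)) \ge \lambda\}$ is closed for fixed $u$, and that the intersection of closed sets is closed. By the monotonicity above, $\{x : \mu(H^+(u,x)) \ge \lambda\} = \{x : u \cdot x \le c_u\}$ for a threshold $c_u = \sup\{u\cdot x : \mu(H^+(u,x))\ge\lambda\}$; the point is that this set actually contains its supremal boundary, i.e., the sup is attained. This follows because $H^+(u, x)$ for $x$ on the boundary hyperplane $u \cdot x = c_u$ is the intersection (a decreasing limit, as $t \uparrow c_u$ along $x_0 + tu$ with $u\cdot x_0$ small) $\bigcap_{t < c_u} H^+(u, x_0 + tu)$, and by continuity of the measure from above on a decreasing sequence of sets (all of finite measure, since $\mu$ is a probability measure) we get $\mu(H^+(u,x)) = \lim_{t \uparrow c_u}\mu(H^+(u,x_0+tu)) \ge \lambda$. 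Hence $\{x : u\cdot x \le c_u\}$ is a genuine closed halfspace (or all of $\R^n$, or empty), so $L_\lambda$ is an intersection of closed sets, hence closed; thus $f_\mu$ is upper semicontinuous.

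The main obstacle I anticipate is the measure-continuity argument in the upper semicontinuity step: one must be careful that $\mu(H^+(u,x))$, as a function of the scalar $u\cdot x$, is left-continuous (continuous from the side where the halfspaces are decreasing), which is exactly what the continuity-from-above property of a finite measure delivers — but one needs the right monotone family of halfspaces and to note they all have finite $\mu$-measure. The potential subtlety is that $\mu(H^+(u,x))$ need not be continuous in $x$ (it can jump when a hyperplane carries positive mass, which is precisely the degeneracy phenomenon quantified by $\chi(S,\mu)$ elsewhere in the paper), so only the one-sided statement is true; fortunately that is all that upper semicontinuity requires, and the direction of the jump is consistent with closedness of the superlevel sets. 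Everything else is routine: quasi-concavity and upper semicontinuity are each preserved under arbitrary infima, which is the only other ingredient.
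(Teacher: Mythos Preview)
Your argument is correct. The paper itself does not give a proof: it simply cites Proposition~1 (quasi-concavity) and Proposition~4 (upper semicontinuity) of Rousseeuw and Ruts (1999). Your write-up supplies exactly the standard self-contained argument behind those propositions --- expressing each superlevel set of $f_\mu$ as $\bigcap_{u} \{x:\mu(H^+(u,x))\ge\lambda\}$, using the monotonicity of $\mu(H^+(u,x))$ in $u\cdot x$ to see that each factor is a (closed) halfspace, and invoking continuity from above of the finite measure $\mu$ for the closedness --- so there is no substantive difference in approach, only in whether the details are spelled out.
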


\begin{proof} For quasi-concavity, see Proposition 1 in~\cite{rousseeuw1999depth}, and for upper semicontinuity see Proposition 4 in~\cite{rousseeuw1999depth}. 
\end{proof}

%
%

\begin{remark}\label{rem:maximum}
Lemma~\ref{quasi-concave} 
shows that $\sup_{x\in S} f_\mu(x)$ is always attained. See Proposition 7 in~\cite{rousseeuw1999depth} where this is discussed for $S=\R^n$. The generalization to any nonempty, closed subset $S$ is easy; see also Proposition 5 in~\cite{rousseeuw1999depth} which states the for every $\alpha > 0$, the set
$\{x\in\R^n : f_\mu(x) \ge \alpha \}$ is compact.
\end{remark}

\subsection{A general lower bound based on Helly numbers}\label{sec:general:thm:helly}
We generalize a theorem well-known in the literature on half-space (Tukey) depth~\cite[Proposition 9]{rousseeuw1999depth}; this was earlier stated by Gr\"unbaum \cite[Theorem 1]{Gruenbaum1960} for uniform probability measures on convex bodies. In all of these works, the authors consider $S=\R^n$, as discussed in the introduction. 
{ We consider more general sets $S$.}
For this we define the Helly number of a set $S\subseteq \R^n$.
Let $\mathcal{K}:=\{S \cap K \;|\; K\subseteq\R^n \; \text{convex}\}$.
Then the Helly-number $h=h(S)\in\N$ of $S$ is defined as the smallest number such that the following property is satisfied for all finite subsets $\{C_1,\dots,C_m\}\subseteq\mathcal{K}$:
If
$$C_{i_1}\cap\dots\cap C_{i_h}\neq\emptyset \text{ for all } \{i_1,\dots,i_h\}\subseteq\{1,\dots,m\}$$
then
$$C_1\cap\dots\cap C_m\neq\emptyset.$$
If no such number exists, then $h(S)=\infty$. This extension of Helly's number was first considered by Hoffman~\cite{hoffman1979binding}, and has recently been studied in~\cite{AverkovWeismantel12,averkov2013maximal,de2015helly}. 

\begin{theorem}\label{thm:general:thm:helly} Let $S\subseteq \R^n$ be a nonempty, closed subset and let $\mu$ be such that $\mu(\R^n\setminus S) = 0$.
If $h(S)<\infty$, then $\mathcal{F}(S,\mu)\ge h(S)^{-1}$.
\end{theorem}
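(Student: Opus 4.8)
The plan is to mimic the classical Grünbaum–Rousseeuw argument for $S=\R^n$ but with the Euclidean Helly number $n+1$ replaced by $h(S)$. Fix $S$ and $\mu$ with $h:=h(S)<\infty$, and suppose for contradiction that $\F(S,\mu)<h^{-1}$, i.e. for every $x\in S$ there is a direction $u=u(x)\in\mathcal{S}^{n-1}$ with $\mu(H^+(u,x))<h^{-1}$. Equivalently, writing $\alpha:=\F(S,\mu)<h^{-1}$, for every $x\in S$ there is an open halfspace $G_x\ni x$ (a slight enlargement of $H^+(u(x),x)$, pushed outward so that it is open and still has $\mu(G_x)<h^{-1}$; here one uses that $\mu$ is a probability measure and that $H^+$ is a closed halfspace, so the measure of a sufficiently small open superset stays below $h^{-1}$) with $x\in G_x$. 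This gives an open cover of $S$. The idea is to pass to the complements: the closed sets $C_x := S\setminus G_x = S\cap H$, where $H$ is the closed halfspace complementary to $G_x$, each have $\mu(C_x)>1-h^{-1}$, and each $C_x$ lies in the family $\mathcal{K}=\{S\cap K : K \text{ convex}\}$ appearing in the definition of $h(S)$.

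The key step is a counting/Helly argument. First observe that any $h$ of the sets $C_x$ must have nonempty intersection: by inclusion–exclusion (or just subadditivity of $\mu$ applied to the complements), $\mu\big(\bigcup_{j=1}^h C_{x_j}^c\big)\le \sum_{j=1}^h \mu(C_{x_j}^c) < h\cdot h^{-1} = 1$, hence $\mu\big(\bigcap_{j=1}^h C_{x_j}\big)=1-\mu\big(\bigcup_{j=1}^h C_{x_j}^c\big)>0$, so in particular $\bigcap_{j=1}^h C_{x_j}\neq\emptyset$. To apply the definition of the Helly number we need a \emph{finite} subfamily, so the plan is: since $S$ is closed and the $G_x$ form an open cover of $S$, restrict attention to a situation where finitely many suffice. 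One clean way is to first reduce to a bounded piece of $S$: note that $\mu$ is a probability measure, so there is a compact set $Q$ with $\mu(Q)>1-h^{-1}$; then $Q\cap S$ is compact and is covered by $\{G_x\}_{x\in Q\cap S}$, so finitely many $G_{x_1},\dots,G_{x_m}$ cover $Q\cap S$. The corresponding $C_{x_1},\dots,C_{x_m}\in\mathcal{K}$ satisfy the $h$-wise intersection property by the previous paragraph, so by definition of $h(S)$ we get $\bigcap_{i=1}^m C_{x_i}\neq\emptyset$; pick a point $p$ in this intersection. But $p\in S$ (each $C_{x_i}\subseteq S$), and $p\notin G_{x_i}$ for every $i$, so $p\notin \bigcup_i G_{x_i}\supseteq Q\cap S$; combined with the fact that we can also arrange the cover to include $Q\cap S$ entirely, we need $p\notin Q$. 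This is where care is needed — see below.

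The main obstacle, and the point requiring the most care, is making the "finitely many sets" step actually produce a contradiction rather than just a non-covered point outside $Q$. The fix is to also throw into the family a halfspace cut that controls the "outside": more precisely, after picking $p\in\bigcap_i C_{x_i}$ we still have $p\in S$, so by hypothesis there is an open halfspace $G_p\ni p$ with $\mu(G_p)<h^{-1}$, giving $C_p=S\setminus G_p\in\mathcal{K}$ with $\mu(C_p)>1-h^{-1}$; adding $C_p$ to the finite family preserves the $h$-wise intersection property (the inclusion–exclusion bound above works for \emph{any} $h$ of the $C$'s), so Helly gives a new common point $p'\in\bigcap_i C_{x_i}\cap C_p$, and one iterates — but this does not obviously terminate. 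The genuinely clean route, which I would adopt, is instead the standard one: \emph{first} extract from the open cover $\{G_x\}_{x\in S}$ of the (closed but possibly unbounded) set $S$ a countable subcover $\{G_{x_k}\}_{k\in\N}$ (Lindelöf), note $\mu\big(\bigcup_k G_{x_k}\big)=\mu(S)=1$, choose $m$ with $\mu\big(\bigcup_{k=1}^m G_{x_k}\big)>1-h^{-1}$, whence $\mu\big(\bigcap_{k=1}^m C_{x_k}\big)=1-\mu\big(\bigcup_{k=1}^m G_{x_k}\big)<h^{-1}$; on the other hand the $h$-wise nonempty-intersection property holds for $\{C_{x_1},\dots,C_{x_m}\}\subseteq\mathcal{K}$, so by definition of $h(S)$ the full intersection $\bigcap_{k=1}^m C_{x_k}$ is nonempty — but that alone is not yet the contradiction.

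\noindent\textbf{Getting the contradiction.} The resolution is that we should not merely cover $S$ by the $G_x$; we should use a \emph{positive-measure} version of Helly. The correct classical statement (Grünbaum) is: if $\mathcal{D}$ is a finite family of sets in $\mathcal{K}$ each of $\mu$-measure $>1-h^{-1}$, then $\bigcap\mathcal{D}\neq\emptyset$ — which is exactly what the inclusion–exclusion bound plus the definition of $h(S)$ gives us, since any $h$ of them intersect. Now take \emph{all} $C_x$, $x\in S$: every finite subfamily has nonempty intersection, and every member lies in $\mathcal{K}$; I would like to conclude $\bigcap_{x\in S} C_x\neq\emptyset$, i.e. there is a point $p^*\in S$ with $p^*\notin G_x$ for all $x\in S$ — but $p^*\in S$ forces $p^*\in G_{p^*}$, the desired contradiction. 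Passing from "every finite subfamily intersects" to "the whole family intersects" needs a compactness input: it holds if the $C_x$ are closed and \emph{at least one} of them is compact (finite-intersection-property characterization of compactness). So the plan's final form is: among the directions, note that since each $\mu(G_x)<h^{-1}<1$ we have $\mu(C_x)>0$, and since $\mu$ is a probability measure we may pick one $x_0\in S$ and intersect with a large closed ball $B$ of $\mu$-measure close to $1$; replace $C_{x_0}$ by $C_{x_0}\cap B$, still of measure $>1-h^{-1}$ provided $B$ was chosen large enough, hence still in $\mathcal{K}$, and now compact. Then $\{C_x : x\in S\}$ with this one member compact and the finite-intersection property yields a common point $p^*$, and $p^*\in S\cap C_{p^*}=S\setminus G_{p^*}$ while $p^*\in G_{p^*}$ — contradiction. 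Hence $\F(S,\mu)\ge h(S)^{-1}$. The one technical wrinkle to verify carefully is the passage from the closed halfspace $H^+(u(x),x)$ of measure $<h^{-1}$ (which exists by negation of $\F(S,\mu)\ge h^{-1}$, using that the infimum over $\mathcal{S}^{n-1}$ need not be attained, so one takes a near-minimizing $u$) to an \emph{open} halfspace $G_x$ containing $x$ with measure still $<h^{-1}$; this uses continuity of $\mu$ from above along a shrinking nested sequence of open halfspaces and is routine.
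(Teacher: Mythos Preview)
Your final argument is correct and is essentially the paper's proof: both show that any $h(S)$ of the sets $S\cap H$ (with $H$ a closed halfspace of $\mu$-measure $>1-h(S)^{-1}$) have positive-measure, hence nonempty, intersection by subadditivity, invoke the definition of $h(S)$ to get the finite-intersection property, and then intersect with a large ball $B$ to produce a compact member so that FIP yields a common point. The paper streamlines your open-versus-closed halfspace maneuver by introducing an $\epsilon$-slack and showing directly, for every $\epsilon>0$, that the super-level set $\{x\in S: f_\mu(x)\ge h(S)^{-1}-\epsilon\}=\bigcap\{B\cap H\cap S:\mu(H)\ge 1-h(S)^{-1}+\epsilon\}$ is nonempty; apart from this cosmetic difference (and your discarded Lindel\"of and iterative detours), the skeleton is identical.
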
 


\begin{proof}
The proof follows along similar lines as~\cite[Proposition 9]{rousseeuw1999depth}. It suffices to show that for every $\epsilon > 0$, the set 
$\{x \in \R^n:f_\mu(x)\le h(S)^{-1} -\epsilon\}$ is nonempty. By standard measure-theoretic arguments, there exists a ball $B$ centered at the origin such that $\mu(B) \geq 1 - \frac{\epsilon}{2}$ and 
$\{x \in \R^n:f_\mu(x)\le h(S)^{-1} -\epsilon\} \subseteq B$ (by Remark~\ref{rem:maximum}, 
$\{x \in \R^n:f_\mu(x)\le h(S)^{-1} -\epsilon\}$ is compact). By Proposition 6 in~\cite{rousseeuw1999depth}, 
$$\begin{aligned}&\{x \in S:f_\mu(x)\le h(S)^{-1} -\epsilon\} \\ = & \bigcap\{H \cap S :  H \textrm{ is a closed half space with } \mu(H) \geq 1 - (h(S)^{-1}  -\epsilon)\}.\end{aligned}$$
Define $\mathcal{C} = \{B \cap H \cap S :  H \textrm{ is a closed half space with } \mu(H) \geq 1 - (h(S)^{-1} - \epsilon)\}$. Thus, $\mathcal C$ is a family of compact sets such that 
$\{x \in S:f_\mu(x)\le h(S)^{-1} -\epsilon\} = \bigcap \{C  :  C\in \mathcal{C}\}$. For any subset $\{C_1, \ldots, C_{h(S)}\} \subseteq \mathcal{C}$ of size $h(S)$, we claim $$\mu(C_1^c \cup \ldots C_{h(S)}^c) \leq 1 - h(S)\frac{\epsilon}{2}.$$ This is because each $C_i^c = B^c \cup H_i^c\cup S^c$ for some half space $H_i$ satisfying $\mu(H_i^c) \leq h(S)^{-1} - \epsilon$. Since $\mu(B^c) \leq \frac{\epsilon}{2}$ and $\mu(S^c) = 0$, we obtain that $\mu(C_i^c) \leq h(S)^{-1} -\frac{\epsilon}{2}$. Therefore, $$\mu(C_1 \cap \ldots \cap C_{h(S)}) = 1 - (\mu(C_1^c \cup \ldots C_{h(S)}^c)) \geq 1 - (1 - h(S)\frac{\epsilon}{2})  = h(S)\frac{\epsilon}{2} > 0.$$
This implies that $C_1 \cap \ldots \cap C_{h(S)} \neq \emptyset$. Therefore, by definition of $h(S)$, for every finite subset $\{C_1, \ldots, C_m\} \subseteq \mathcal{C}$, $C_1 \cap \ldots \cap C_m \neq \emptyset$. By the finite intersection property of compact sets, we obtain that 
$\{x \in S:f_\mu(x)\le h(S)^{-1} -\epsilon\} = \bigcap \{C  :  C\in \mathcal{C}\}$ is nonempty.
\end{proof}

By applying the well known bound for the mixed-integer Helly-number \cite{hoffman1979binding,AverkovWeismantel12,de2015helly} we get the following Corollary.
\begin{corollary}\label{cor:helly}
$\mathcal{F}(\Z^n\times\R^d,\mu)\ge\frac{1}{2^n(d+1)}$ for any finite measure $\mu$ on $\R^{n+d}$ such that $\mu(\R^{n+d}\setminus(\Z^n\times\R^d))=0$. In particular, this holds for $\mu_{mixed,K}$ for any convex body $K\subseteq \R^n\times \R^d$.
\end{corollary}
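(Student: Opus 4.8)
The plan is to derive Corollary~\ref{cor:helly} directly from Theorem~\ref{thm:general:thm:helly} by identifying the Helly number of the mixed-integer lattice $S = \Z^n \times \R^d$. Theorem~\ref{thm:general:thm:helly} tells us that $\mathcal{F}(S,\mu) \ge h(S)^{-1}$ whenever $\mu$ is supported on $S$ and $h(S) < \infty$, so the entire content of the corollary reduces to the single fact $h(\Z^n \times \R^d) \le 2^n(d+1)$. This is precisely the mixed-integer analogue of Helly's theorem, first proved by Hoffman~\cite{hoffman1979binding} and revisited in~\cite{AverkovWeismantel12,de2015helly}: a finite family of convex sets in $\R^{n+d}$ whose every subfamily of size $2^n(d+1)$ has a common mixed-integer point in $\Z^n \times \R^d$ must have a common mixed-integer point. (Note the extreme cases are consistent: $d=0$ gives the classical $S\!$-Helly number $2^n$ of the integer lattice, and $n=0$ gives the continuous Helly number $d+1$.)

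The first step is to invoke this bound to conclude $h(\Z^n\times\R^d) = 2^n(d+1) < \infty$. The second step is to check that the hypotheses of Theorem~\ref{thm:general:thm:helly} are met: $S = \Z^n\times\R^d$ is nonempty and closed, and we are given that $\mu(\R^{n+d}\setminus(\Z^n\times\R^d)) = 0$. Applying the theorem then yields $\mathcal{F}(\Z^n\times\R^d,\mu) \ge h(S)^{-1} = \frac{1}{2^n(d+1)}$, which is the stated inequality. The third step, the ``in particular'' clause, only requires observing that $\mu_{mixed,K}$ as defined in~\eqref{eq:mixed} is a finite (indeed probability) measure on $\R^{n+d}$ supported on $\Z^n\times\R^d$, since each summand $\vol_d(C\cap K\cap(\{z\}\times\R^d))$ vanishes unless the slice lies in $\{z\}\times\R^d$; hence the general statement applies verbatim.

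I do not anticipate a genuine obstacle here, as the corollary is essentially a plug-in of a known combinatorial-geometric constant into an already-proved theorem. The only point requiring a modicum of care is making sure the definition of the $S$-Helly number used in Section~\ref{sec:general:thm:helly} (phrased via the family $\mathcal{K} = \{S\cap K : K \text{ convex}\}$) matches the formulation in~\cite{hoffman1979binding,AverkovWeismantel12,de2015helly}, so that the cited bound $2^n(d+1)$ is the right quantity to substitute; this is a routine check since both formulations ask for a common point of $S$ inside intersections of convex sets. One could also remark that the bound $2^n(d+1)$ is tight, so the resulting lower bound on $\mathcal{F}$ is the best obtainable from the Helly-number approach alone — motivating the sharper analysis of the subsequent section for the specific measure $\mu_{mixed,K}$.
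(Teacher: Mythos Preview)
Your proposal is correct and follows exactly the paper's approach: the paper simply cites the mixed-integer Helly number bound $h(\Z^n\times\R^d)=2^n(d+1)$ from~\cite{hoffman1979binding,AverkovWeismantel12,de2015helly} and plugs it into Theorem~\ref{thm:general:thm:helly}. Your additional checks (closedness of $S$, that $\mu_{mixed,K}$ is supported on $S$, and the consistency of the Helly-number definitions) are all routine and in line with what the paper leaves implicit.
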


\subsection{Better bounds for the mixed-integer lattice}\label{sec:new-structure}
We would like to improve the bound on $\mathcal{F}(\Z^n\times \R^d,\nu)$ coming from Helly numbers (Theorem~\ref{thm:general:thm:helly} and Corollary~\ref{cor:helly}) when $\nu$ is a mixed-integer measure. Better bounds have been obtained by Gr\"unbraum for the purely continuous case ($n=0$), by exploiting properties of the {\em centroid} of a convex body $K$, which is defined as
$
c_K:=\int_K x \d \mu(x),
$
where the integral is taken with respect to the uniform measure $\mu$ on $K$.
Gr\"unbaum proved in \cite{Gruenbaum1960} that $\mu(H^+(u,c_K))\ge\left(\frac{d}{d+1}\right)^d\ge e^{-1}$ for any $u\in \mathcal{S}^{d-1}$, which immediately implies that $\mathcal{F}(\R^d,\mu)\ge e^{-1}$. This, of course, drastically improves the Helly bound of $\frac{1}{d+1}$.
Note that, even though the centerpoint and centroid are equal for several extreme cases, this is in general not true.
In the following we want to extend these improved bounds to the mixed-integer setting.
Ideally, we would want to prove the following conjecture.

\begin{conjecture}\label{chap:centerPoints::conj:gruenbaum}
Let $S=\Z^n\times\R^d$ and let $\nu:=\mu_{mixed,K}$ for some convex body $K\in\R^{n+d}$.
Then $\mathcal{F}(\Z^n\times\R^d,\nu)\ge \frac{1}{2^n} \left(\frac{d}{d+1}\right)^d \geq \frac{1}{2^n}\frac{1}{e}.$
\end{conjecture}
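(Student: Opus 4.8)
\textbf{Proof proposal for Conjecture~\ref{chap:centerPoints::conj:gruenbaum}.}

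The plan is to reduce the mixed-integer statement to the purely continuous Gr\"unbaum bound by projecting out the integer coordinates, and then to lose only a factor $\frac{1}{2^n}$ by a ``Tukey-depth in the lattice'' argument for the remaining $n$ integer directions. Concretely, write a point of $\R^{n+d}$ as $(x,y)$ with $x\in\R^n$, $y\in\R^d$. For each $z\in\Z^n$ let $K_z := \{y\in\R^d : (z,y)\in K\}$, a convex body in $\R^d$ (possibly empty), carrying mass $w_z := \vol_d(K_z)$; then $\nu = \mu_{mixed,K}$ is the ``mixture'' $\sum_z w_z \cdot (\text{uniform measure on }\{z\}\times K_z)$, normalized by $W := \sum_z w_z$. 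First I would handle the continuous fiber directions: fix any candidate centerpoint $(\bar x,\bar y)$ and any unit vector $u=(u_x,u_y)\in\mathcal S^{n+d-1}$ with $u_y\neq 0$. The hyperplane $H(u,(\bar x,\bar y))$ meets each fiber $\{z\}\times\R^d$ in an affine hyperplane, so on the fiber $z$ with the uniform measure on $K_z$ we are in the situation of Gr\"unbaum's theorem: if we choose $\bar y$ to be (a suitable common point related to) the centroids of the $K_z$, each such fiber contributes at least $\left(\frac{d}{d+1}\right)^d$ of its own mass $w_z$ to $H^+(u,(\bar x,\bar y))$, hence $\nu(H^+(u,(\bar x,\bar y)))\ge \left(\frac{d}{d+1}\right)^d$, which already beats the claimed bound. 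The genuinely delicate vectors are the ``integer'' directions $u\in\mathcal S^{n-1}\times\{0\}^d$, treated next.

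For such a $u=(u_x,0)$, the halfspace $H^+(u,(\bar x,\bar y))$ is a union of entire fibers: it equals $\bigl(\{z : u_x\cdot(z-\bar x)\ge 0\}\times\R^d\bigr)\cap K$ (up to the boundary fiber, which by Remark~\ref{rem:minimum} we may include or exclude at will since the infimum is attained and we can perturb). So $\nu(H^+(u,(\bar x,\bar y))) = \frac{1}{W}\sum_{z : u_x\cdot z\ge u_x\cdot\bar x} w_z$. This is exactly the Tukey depth of the point $\bar x$ with respect to the discrete weighted measure $\lambda$ on $\Z^n$ given by $\lambda(\{z\})=w_z/W$. Thus the whole problem for the integer directions is: show there exists $\bar x$ (which we will want to pick as an \emph{integer} point, since the centerpoint must lie in $S=\Z^n\times\R^d$, and also compatibly with the centroid requirement from the first part) whose Tukey depth with respect to $\lambda$ is at least $\frac{1}{2^n}$. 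The key tool is the mixed-integer Helly number $h(\Z^n)=2^n$ together with the argument of Theorem~\ref{thm:general:thm:helly}; indeed Corollary~\ref{cor:helly} already gives $\mathcal F(\Z^n,\lambda)\ge \frac{1}{2^n}$ for \emph{any} measure $\lambda$ supported on $\Z^n$, and the maximizing $\bar x$ can be taken in $\Z^n$ because the depth function restricted to $\Z^n$ attains its max there. Combining: choosing $\bar x$ to be such a $2^{-n}$-deep integer point and $\bar y$ the corresponding centroid data, every unit vector gives measure at least $\min\left\{\left(\tfrac{d}{d+1}\right)^d,\ \tfrac{1}{2^n}\left(\tfrac{d}{d+1}\right)^d\right\} = \frac{1}{2^n}\left(\frac{d}{d+1}\right)^d$, as required. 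Wait—the two bounds must be made to \emph{multiply}, not just be compared; see the next paragraph.

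The main obstacle, and where the argument needs real care, is coupling the two coordinate blocks: for the integer directions I want a factor $\frac{1}{2^n}$, for the continuous directions a factor $\left(\frac{d}{d+1}\right)^d$, and the conjecture asks for their \emph{product}. The clean way is to pick $\bar x\in\Z^n$ of Tukey depth $\ge 2^{-n}$ w.r.t.\ $\lambda$, and then on the sub-lattice of fibers that survive the worst integer cut to apply Gr\"unbaum fiberwise — but a single $\bar y$ cannot be the centroid of every $K_z$ simultaneously, so one must instead choose $\bar y$ as the centroid of the \emph{$\bar x$-fiber's convex body $K_{\bar x}$} and argue that Gr\"unbaum's inequality survives taking a weighted average over the fibers on the ``$+$'' side of a mixed direction; this needs the observation that for $u$ with $u_y\neq0$, $\nu(H^+(u,(\bar x,\bar y)))\ge \left(\frac{d}{d+1}\right)^d\cdot\nu\bigl(\{z: \text{fiber }z\text{ is not entirely on the }-\text{side}\}\bigr)$, and then a Fubini/marginalization argument combining this with the $2^{-n}$ depth of $\bar x$ among the fibers. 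Fleshing out this marginalization — in particular showing the ``partial'' Gr\"unbaum factor $\left(\frac{d}{d+1}\right)^d$ genuinely multiplies the lattice depth $2^{-n}$ rather than getting min'd with it — is the technical heart of the proof and the reason the statement remains a conjecture in this paper; I would expect one needs either a joint log-concavity/Prékopa-Leindler input on the function $z\mapsto w_z$ (which is discrete and hence not literally log-concave, forcing an approximation of $K$ by its fiber-integral profile), or a clever choice of $\bar x$ that is simultaneously deep \emph{and} centroid-compatible, and making one of these routes rigorous is precisely the gap.
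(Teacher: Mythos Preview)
This statement is a \emph{conjecture} in the paper, not a theorem: the authors explicitly write ``While we have not been able to resolve the above conjecture, we show that it holds in the regime of convex sets $K$ with `large' lattice-width'' and then prove Theorem~\ref{thm:boundForLargeLatticeWidth-reform} as partial progress. So there is no paper proof to compare against, and your final paragraph is exactly right that ``making one of these routes rigorous is precisely the gap.''

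Your diagnosis of \emph{where} the difficulty lies is accurate. The pure integer directions $u=(u_x,0)$ are indeed handled by Corollary~\ref{cor:helly}: the marginal $\lambda(\{z\})=w_z/W$ is a measure on $\Z^n$, so some $\bar x\in\Z^n$ has Tukey depth $\ge 2^{-n}$, and for those $u$ you get $\nu(H^+)\ge 2^{-n}\ge 2^{-n}(d/(d+1))^d$. The obstruction is entirely in directions with $u_y\neq 0$. Two of your intermediate claims there do not hold as stated. First, the sentence in your opening paragraph that ``each such fiber contributes at least $\left(\frac{d}{d+1}\right)^d$ of its own mass'' requires the hyperplane to pass through the centroid of \emph{that} fiber; for $z\neq\bar x$ the hyperplane $H(u,(\bar x,\bar y))$ meets $\{z\}\times\R^d$ at height $u_y\cdot y = u_y\cdot\bar y + u_x\cdot(\bar x-z)$, which has no reason to be near the centroid of $K_z$ --- you acknowledge this later, but the first paragraph reads as if it were already settled. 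Second, the displayed inequality you propose in the last paragraph,
\[
\nu(H^+(u,(\bar x,\bar y)))\ \ge\ \left(\tfrac{d}{d+1}\right)^d\cdot \nu\bigl(\{z:\text{fiber }z\text{ not entirely on the }-\text{ side}\}\bigr),
\]
is false in general: a fiber that is only grazed by $H^+$ contributes an arbitrarily small fraction of its mass, so no uniform $(d/(d+1))^d$ factor survives averaging. This is exactly why the two factors refuse to multiply.

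For context, the paper's partial result (Theorem~\ref{thm:boundForLargeLatticeWidth-reform}) takes a completely different route: rather than decomposing into fibers, it shows that when $\omega(K|_{\R^n})$ is large the mixed-integer measure $\bar\mu_{mixed}$ and the Lebesgue measure on $K$ are within a multiplicative $e^{\pm 1/c}$ of each other (Lemma~\ref{lem:mixedVolume}), so one can import the \emph{continuous} centerpoint of $K$, round it to $\Z^n\times\R^d$, and control the rounding error. Your log-concavity suggestion for $z\mapsto w_z$ points in a similar ``compare to continuous'' direction, but as you note, discreteness of the support blocks a direct Pr\'ekopa--Leindler application; the paper circumvents this via lattice-width rather than resolving it.
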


While we have not been able to resolve the above conjecture, we show that it holds in the regime of convex sets $K$ with ``large'' {\it lattice-width}, where the lattice-width is defined as $$\w(K):=\min_{z\in\Z^n\setminus\{0\}}[\max_{x\in K}u \cdot x-\min_{x\in K}u \cdot x].$$ In fact, we prove something stronger in this regime:

\begin{theorem}\label{thm:boundForLargeLatticeWidth-reform}
There exists a universal constant $\alpha$ such that for all $n, d \in \N$ and any convex body $K \subseteq \R^{n+d}$ with $\omega(K) > 2 c n (n+d)^{5/2} \alpha n^{n+1}$ for some $c \in \R_+$, the following holds:
$$\mathcal{F}(\Z^n\times\R^d,\nu)\ge e^{-\frac{1}{c}-1} + e^{-\frac{2}{c}} -1.$$ 
In particular, when $c \in \R_+$ is such that $e^{-\frac{1}{c}-1} + e^{-\frac{2}{c}} -1 \geq 2^{-n-1}$, we have $\mathcal{F}(\Z^n\times\R^d,\nu) \geq \frac{1}{2^n}\frac12 \geq \frac{1}{2^n} \left(\frac{d}{d+1}\right)^d \geq \frac{1}{2^n}\frac{1}{e}.$
\end{theorem}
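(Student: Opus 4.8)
The plan is to reduce the mixed-integer statement to the purely continuous Grünbaum bound by slicing along the integer fibers and exploiting the large lattice-width hypothesis to control how the measure is distributed among these fibers. Fix a candidate center $x^* = (z^*, y^*) \in \Z^n \times \R^d$, to be chosen carefully, and a direction $u \in \mathcal{S}^{n+d-1}$; we must lower bound $\nu(H^+(u,x^*))$. Write $u = (u_1, u_2)$ with $u_1 \in \R^n$, $u_2 \in \R^d$. First I would dispose of the ``continuous'' directions: if $u_1 = 0$, then $H^+(u, x^*)$ cuts each fiber $\{z\} \times \R^d$ by a parallel halfspace, so it suffices to pick $y^*$ to be (close to) the centroid of the $d$-dimensional slice $K \cap (\{z\} \times \R^d)$ simultaneously for all $z$ — which one cannot do exactly, but for $K$ with huge lattice-width the slices are ``almost translates'' of each other in a quantitative sense, so a single $y^*$ works up to a small error. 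This is where the $\alpha n^{n+1}$-type factor and the $e^{-1/c-1}, e^{-2/c}$ corrections enter: the parameter $c$ governs, via $\omega(K) > 2cn(n+d)^{5/2}\alpha n^{n+1}$, the ratio between the ``width'' of $K$ and the slab thickness needed to house the relevant fibers, and the stated constant $e^{-1/c-1} + e^{-2/c} - 1$ should arise from combining Grünbaum's $(\frac{d}{d+1})^d \ge e^{-1}$ with a Markov/volume-comparison loss of the form $1 - e^{-1/c}$ on each of two sides and an inclusion-exclusion over the two half-spaces.

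Next I would handle a general direction $u$ with $u_1 \neq 0$. Project $K$ onto the $\R^n$ coordinates to get a convex body $\bar K \subseteq \R^n$; since $\omega(K)$ is large, $\omega(\bar K)$ is large (lattice-width does not increase under this projection restricted to the integer directions), so by Khinchine's flatness theorem / a lattice-width lower bound, $\bar K$ contains many integer points, and in fact contains a long integer segment in essentially any rational direction. The key point: for the direction $u_1$, the integer fibers hit by $K$ project to a one-dimensional ``integer interval'' along $u_1$ that is long (length $\gtrsim \omega(K)/\text{poly}(n)$, hence $\gg n^{n+1}$). I would then pick $z^*$ to be the ``lattice centerpoint'' of this set of fibers — i.e., apply the one-dimensional/low-dimensional integer centerpoint bound (Corollary~\ref{cor:helly} gives $\frac{1}{2^n}$, but here using the long-interval structure we get essentially $\frac{1}{2^n}$ with negligible boundary loss because the number of fibers is enormous compared to $2^n$). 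Any halfspace $H^+(u,x^*)$ with $u_1 \ne 0$ then contains at least a $\frac{1}{2^n}(1 - o(1))$ fraction of the fiber-mass on the ``correct'' side, and again combining with the continuous Grünbaum estimate inside each retained fiber yields the bound $e^{-1/c-1} + e^{-2/c} - 1$.

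The main obstacle — and the step I expect to require the most care — is the simultaneous choice of $y^*$: a single point in $\R^d$ must serve as an approximate centroid for all the $d$-dimensional slices $K \cap (\{z\} \times \R^d)$ over all relevant $z$, since $x^*$ has to be one point. For this I would use that along any integer segment contained in the projection $\bar K$, consecutive slices of a convex body vary in a controlled, ``almost affine'' way, and over a window of $N$ consecutive fibers with $N \gg 1$ the centroids $c_z$ trace out a near-linear curve; choosing $z^*$ near the middle of the window and $y^* = c_{z^*}$ makes the centroid of every retained slice within distance $O(\text{diam}(K)/N)$ of $y^*$, and $N \gtrsim \omega(K)/\text{poly}(n) \gg \alpha n^{n+1}$ makes this displacement small relative to the slice geometry. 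Quantifying ``small enough'' is exactly what forces the $\alpha n^{n+1}$ factor in the lattice-width hypothesis and produces the precise constant in the conclusion; the final inequality $e^{-1/c-1} + e^{-2/c} - 1 \ge 2^{-n-1}$ in the ``in particular'' clause is then just a numerical check that for a suitable absolute $c$ the displayed expression clears $\frac{1}{2}$ (so the $2^{-n}$ factor is not the binding constraint), followed by the trivial chain $\frac12 \ge (\frac{d}{d+1})^d \ge \frac1e$.
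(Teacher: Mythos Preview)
Your approach is genuinely different from the paper's, and it has a real gap. The paper does \emph{not} work fiber-by-fiber; its key lemma (Lemma~\ref{lem:mixedVolume}) shows that when $\omega(K|_{\R^n})$ is large, the unnormalized mixed-integer measure $\bar\nu$ and the Lebesgue measure $\bar\mu$ on $K$ agree up to a multiplicative factor in $[e^{-1/c}, e^{1/c}]$. From this one deduces directly (Claim~\ref{claim:half-space-approx}) that $\nu(H^+) \geq \mu(H^+) + e^{-2/c} - 1$ for \emph{every} halfspace $H^+$. The candidate point is then simply the \emph{continuous} centerpoint $x^\star$ of $K$ with respect to $\mu$, rounded to a nearby mixed-integer point $[x^\star]_K$ using a lattice basis whose fundamental box sits inside $K$ (Lemma~\ref{sec:app:lem:inscribedBox}); the rounding costs at most a factor $e^{-1/c}$ in $\mu$-measure of any halfspace. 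Combining with Gr\"unbaum's bound $\mathcal{F}(\R^{n+d},\mu) \geq e^{-1}$ yields $f_\nu([x^\star]_K) \geq e^{-1/c}\cdot e^{-1} + e^{-2/c} - 1$. No factor of $2^{-n}$ ever enters.

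Your fiber-based strategy, by contrast, cannot produce the stated bound. You explicitly invoke a ``lattice centerpoint'' in the integer directions that gives only a $\tfrac{1}{2^n}$ fraction of fiber-mass on the correct side of $H^+$, and then combine with a $d$-dimensional Gr\"unbaum bound inside each retained fiber. But for large $c$ the target $e^{-1/c-1} + e^{-2/c} - 1$ tends to $e^{-1}$, \emph{independent of $n$}; already for $n=2$, $d=0$, $K=[0,N]^2$ with $N$ huge, the theorem promises roughly $e^{-1}\approx 0.37$, whereas your argument gives at best $\tfrac{1}{4}$. The whole point of the large-lattice-width hypothesis is that the integer structure becomes invisible and the problem reduces to the continuous Gr\"unbaum bound in dimension $n+d$, not to a fiber-wise $d$-dimensional Gr\"unbaum bound multiplied by a $2^{-n}$ integer factor. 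A secondary issue: your choice of $z^*$ (``the lattice centerpoint along $u_1$'') is described as depending on the direction $u$, whereas $x^*$ must be fixed before $u$ is revealed; even read charitably, you have not indicated why a single $z^*$ should work for all $u_1$ simultaneously without the $2^{-n}$ loss.
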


We denote the projection of a set $X\subseteq\R^{n+d}$ onto the first $n$ coordinates by $X|_{\R^n}$.

\begin{remark} Theorem~\ref{thm:boundForLargeLatticeWidth-reform} provides some evidence towards our belief in Conjecture~\ref{chap:centerPoints::conj:gruenbaum}. In particular, we see that it holds in two distinct regimes. 

Suppose the convex set $K$ is such that $K|_{\R^n}$ is ``thin'' in every direction; more precisely, suppose there exists a constant $C$ such that for every unit vector $e^i$, $i=1, \ldots, n$ we have $\max_{x\in K} e^i\cdot x - \min_{x\in K}e^i\cdot x \leq C$. Then $\mathcal{F}(\Z^n\times\R^d,\nu)\ge \frac{1
}{C^n}\left(\frac{d}{d+1}\right)^d$. This is witnessed by choosing the centroid of the fiber with at least $\frac{1}{C^n}$ fraction of the total mass of $K \cap (\Z^n \times \R^d)$ -- such a fiber exists because there are at most $C^n$ fibers intersecting $K$.

On the other hand, suppose $K$ is such that $K|_{\R^n}$ is ``fat'' in every direction; more precisely, the hypothesis of Theorem~\ref{thm:boundForLargeLatticeWidth-reform} holds. Then we get an even stronger bound than $\frac{1}{2^n}\left(\frac{d}{d+1}\right)^d$ from Theorem~\ref{thm:boundForLargeLatticeWidth-reform}.
\end{remark}

The rest of this section is devoted to the proof of Theorem~\ref{thm:boundForLargeLatticeWidth-reform}. The main ingredient in the proof of Theorem~\ref{thm:boundForLargeLatticeWidth-reform} is Lemma~\ref{lem:mixedVolume}, where we show that for convex sets with ``large'' lattice width, the $d$-dimensional Lebesgue measure $\bar\nu:= \bar\mu_{mixed}$ of $K\cap(\Z^n\times\R^d)$ can be approximated by the $(d+n)$-dimensional Lebesgue measure $\bar\mu$ of $K$ and vice versa. 
(Note that in this case we do not normalize the measures.)
In the pure integer setting, i.e., $d=0$, this connection is well known.
However, to the best of our knowledge, this kind of result has never been proven for the mixed-integer setting nor explicitly with respect to the lattice width. 

\begin{lemma}\label{lem:mixedVolume}
There exists a universal constant $\alpha$ such that for all $n, d \in \N$ and any convex body $K \subseteq \R^{n+d}$ and $\omega(K|_{\R^n}) \ge c n (n+d)^{5/2} \alpha n^{n+1}$ for some $c \in \R_+$, then the following holds:
\begin{equation*}
e^{-\frac{1}{c}}\le \frac{\bar\nu(K \cap (\Z^n\times\R^d))}{\bar\mu(K)} \le e^{\frac{1}{c}}.
\end{equation*}
\end{lemma}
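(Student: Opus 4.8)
The plan is to compare the $(n+d)$-dimensional volume of $K$ with the mixed-integer volume $\bar\nu(K\cap(\Z^n\times\R^d)) = \sum_{z\in\Z^n}\vol_d(K\cap(\{z\}\times\R^d))$ by viewing both as ``Riemann-sum versus integral'' comparisons over the projection $P := K|_{\R^n} \subseteq \R^n$. For each $z\in\Z^n\cap P$, let $\phi(z) := \vol_d(K\cap(\{z\}\times\R^d))$ be the $d$-volume of the fiber over $z$, and more generally for $x\in P$ let $\phi(x)$ be the $d$-volume of the fiber of $K$ over $x$. By Fubini, $\bar\mu(K) = \int_P \phi(x)\,\d x$, while $\bar\nu(K\cap(\Z^n\times\R^d)) = \sum_{z\in\Z^n\cap P}\phi(z)$. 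So the lemma reduces to showing that the lattice sum of $\phi$ approximates its integral to within a multiplicative factor $e^{\pm 1/c}$, under the lattice-width hypothesis on $P$. The key structural fact to exploit is that $\phi^{1/d}$ is concave on $P$ (this is the Brunn–Minkowski theorem applied to the slices of the convex body $K$), hence $\phi$ is log-concave on $P$; in particular $\phi$ is unimodal and ``spread out'' when $P$ is fat.

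The main step is to reduce the $n$-dimensional comparison to a product of one-dimensional comparisons, peeling off one integer coordinate at a time. Fix the last $n-1$ integer coordinates and consider a single line $\ell$ parallel to the first coordinate axis meeting $P$; along $\ell$, the function $\phi$ restricted to $\ell$ (integrated over the continuous fiber directions) is log-concave in one variable, and its support $\ell\cap P$ has length at least $\w(P) \geq cn(n+d)^{5/2}\alpha n^{n+1}$ divided by an appropriate factor coming from the direction not being axis-aligned — this is exactly where the polynomial-in-$(n,d)$ and the $n^{n+1}$ loss is absorbed. For a one-dimensional log-concave function $\psi$ with support of length $L$, a standard estimate gives that $\big|\sum_{k\in\Z}\psi(k) - \int\psi\big| \le (\text{const})\cdot \frac{\|\psi\|_\infty}{1}$, and since log-concavity forces $\|\psi\|_\infty \le \frac{C}{L}\int\psi$ for a universal $C$ when $L$ is large, one gets $\sum_k\psi(k) \in (1\pm C/L)\int\psi$. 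Iterating this over the $n$ integer directions multiplies $n$ such factors, each of the form $(1 \pm C'/\w(P))$; choosing $\alpha$ (and the polynomial factor $n(n+d)^{5/2}n^{n+1}$) large enough that $C'/\w(P) \le \frac{1}{cn}$, the product of $n$ factors lies between $(1-\tfrac{1}{cn})^n \ge e^{-1/c}\cdot(1-o(1))$ — one must be slightly careful and instead bound $(1+\tfrac{1}{cn})^n \le e^{1/c}$ and $(1-\tfrac{1}{cn})^n \ge e^{-1/c}$ directly by choosing the constants so the per-step error is at most $\tfrac{1}{cn}$ and using $1-t \ge e^{-t/(1-t)}$ type inequalities, or simply absorbing the lower-order slack into the universal constant $\alpha$.

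I expect the main obstacle to be the one-dimensional estimate done uniformly and the bookkeeping of how the lattice width of $P$ controls the length of a generic axis-parallel chord through $P$ after the successive projections/slicings: when we fix integer values of coordinates $2,\dots,n$ and look at the slice, that slice is a lower-dimensional convex set whose width in the first coordinate direction is not literally $\w(P)$ but can degrade. Controlling this degradation is what forces the factor $n^{n+1}$ (roughly, each of the $n$ slicing steps can cost a factor of $n$ in the relevant width, compounding), and the factor $(n+d)^{5/2}$ presumably comes from relating lattice width of $P\subseteq\R^n$ — which is a minimum over $\Z^n\setminus\{0\}$ — to Euclidean widths and to the $d$-dimensional fiber geometry via a flatness-type or John-position argument. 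Making these constants explicit and checking the log-concave tail bound $\|\psi\|_\infty \le \frac{C}{L}\int\psi$ carefully (it requires $L$ large, which is guaranteed by hypothesis) is the technical heart; everything else is Fubini and elementary inequalities on $e^{\pm 1/c}$.
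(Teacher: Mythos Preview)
Your Riemann-sum-versus-integral strategy is genuinely different from the paper's argument, but as written it has a real gap. The claim that an axis-parallel chord $\ell\cap P$ through a lattice slice has length at least $\w(P)$ divided by some factor depending only on $n,d$ is false: for a chord passing near $\partial P$ the length can be arbitrarily small (think of a simplex and a slice one unit from a vertex). You later acknowledge this degradation, but the suggested fix---``Controlling this degradation is what forces the factor $n^{n+1}$''---cannot work, because no quantity depending only on $n,d$ bounds the ratio $\w(P)/\mathrm{length}(\ell\cap P)$ for \emph{every} slice. Consequently your per-slice multiplicative bound $(1\pm\tfrac{1}{cn})$ is unavailable exactly where you need it, and the product-over-$n$-directions argument collapses. (A separate minor issue: in the one-dimensional estimate $\|\psi\|_\infty \le \tfrac{C}{L}\int\psi$ for $\psi = \phi|_\ell$ with $\phi^{1/d}$ concave, the constant is $C=d+1$, not universal; this is absorbable but worth noting.)

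The paper sidesteps the short-slice problem entirely. It first proves (Lemma~\ref{sec:app:lem:inscribedBox}) that, after passing to a suitable lattice basis, $K$ contains a translate of the box $cn\,[-\tfrac12,\tfrac12]^n\times\{0\}^d$; this is where the factors $(n+d)^{5/2}$ (ellipsoidal rounding about the centerpoint, Lemma~\ref{chap:centerPoints::lem:centroidHaussdorfdistance}) and $\alpha n^{n+1}$ (Korkine--Zolotarev basis reduction) actually originate---not from slice degradation. With that box in hand, the proof is a clean sandwich: letting $K_{\lambda,\gamma}$ denote the scaling of $K$ by $\lambda$ in the integer directions and $\gamma$ in the continuous ones, one shows
\[
K_{1-\frac{1}{cn},\,1-\frac{1}{\epsilon k}} \;\subseteq\; \big(K\cap(\Z^n\times\tfrac1k\Z^d)\big)+Q \;\subseteq\; K_{1+\frac{1}{cn},\,1+\frac{1}{\epsilon k}},
\]
where $Q$ is the fundamental cell; taking volumes, letting $k\to\infty$, and using $(1\pm\tfrac{1}{cn})^n \in [e^{-1/c},e^{1/c}]$ gives the result. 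The inscribed box is precisely what makes these inclusions hold globally and obviates any slice-by-slice analysis.
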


For the proof of Lemma~\ref{lem:mixedVolume} we need two technical auxiliary lemmata.
The first lemma, Lemma~\ref{chap:centerPoints::lem:centroidHaussdorfdistance}, gives an ellipsoidal approximation of a convex body using the centerpoint as the center of the two ellipsoids used for the approximation. This is a variation on the classical Fritz-John ellipsoidal approximation result.
In the second lemma, Lemma~\ref{sec:app:lem:inscribedBox}, we show that for a convex body $K$ with ``large'' lattice-width, there exists a basis for the mixed integer lattice such that $K$ contains a scaled copy of the fundamental parallelopiped of the lattice with respect to this basis, centered at the centerpoint of $K$.

\begin{lemma}\label{chap:centerPoints::lem:centroidHaussdorfdistance}
Let $K\subseteq\R^n$ be a compact convex set with nonempty interior and let $\mu$ be the uniform measure with respect to $K$. Further let $x^\star\in\mathcal{C}(\R^n,\mu)$.
Then, there exists an ellipsoid $E$ centered at the origin such that
$$x^\star+E\subseteq K\subseteq x^\star+n^{5/2}E.$$
\end{lemma}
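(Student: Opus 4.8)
The plan is to combine the classical John ellipsoid with the fact that a centerpoint of the uniform measure on $K$ must be geometrically ``deep'' inside $K$. After a translation, assume $x^\star = 0$. Since $f_\mu(0) = \mathcal{F}(\R^n,\mu) > 0$, the origin lies in $\intt(K)$, and by Gr\"unbaum's theorem (the centroid estimate used in the proof of Corollary~\ref{cor:continuous-bound} and restated in Section~\ref{sec:new-structure}) every closed halfspace with $0$ on its boundary satisfies $\mu(H^+(u,0)) \ge \beta := \left(\frac{n}{n+1}\right)^n \ge e^{-1}$. The argument then has two ingredients: (a) $0$ has bounded Minkowski asymmetry with respect to $K$, and (b) John's theorem applied to a symmetrization of $K$.

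For ingredient (a), fix a unit vector $u$ and let $a_u := \max\{t \ge 0 : tu \in K\}$ be the distance from $0$ to $\bd(K)$ in direction $u$. The set $a_u u + \lambda(K - a_u u)$ with $\lambda = \frac{a_u}{a_u + a_{-u}}$ is contained in $K$ (convexity, since $a_u u \in K$) and lies in the halfspace $H^+(u,0)$, so $\mu(H^+(u,0)) \ge \lambda^n = \left(\frac{a_u}{a_u+a_{-u}}\right)^n$. As the bounding hyperplane is $\mu$-null, $\mu(H^-(u,0)) = 1 - \mu(H^+(u,0))$, and combining this with $\mu(H^-(u,0)) \ge \beta$ gives $\frac{a_{-u}}{a_u} \ge (1-\beta)^{-1/n} - 1 =: \rho > 0$. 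Using this estimate with both $u$ and $-u$ yields $\rho \le \frac{a_{-u}}{a_u} \le \rho^{-1}$ for every $u$, which, checked along each ray through the origin, is precisely the chain of inclusions $\tfrac1M(-K) \subseteq K \subseteq M(-K)$ with $M := \rho^{-1}$. A direct computation with $\beta = \left(\frac{n}{n+1}\right)^n$ shows $\rho \ge n^{-2}$, i.e.\ $M \le n^2$: this amounts to $\beta \ge 1 - (1+n^{-2})^{-n}$, which holds with equality for $n=1$ (where $\beta = \frac12$, $M=1$) and is routine for $n \ge 2$.

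For ingredient (b), set $L := K \cap (-K)$, an origin-symmetric convex body with $0 \in \intt(L)$. From $\tfrac1M(-K) \subseteq K$ together with $\tfrac1M(-K) \subseteq -K$ (valid since $M \ge 1$ and $0 \in -K$) we get $\tfrac1M(-K) \subseteq L$, hence $-K \subseteq ML$, and since $L = -L$ this gives $K \subseteq ML$; trivially $L \subseteq K$. Let $E$ be the John ellipsoid of $L$, which is centered at the origin by symmetry and uniqueness, so $E \subseteq L \subseteq \sqrt n\, E$. Then $E \subseteq L \subseteq K$ and $K \subseteq ML \subseteq M\sqrt n\, E \subseteq n^{5/2} E$. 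Translating back, $x^\star + E \subseteq K \subseteq x^\star + n^{5/2} E$, as claimed.

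The main obstacle is ingredient (a): converting the measure-theoretic lower bound on the Tukey depth of the centerpoint into the scale-invariant geometric statement $\tfrac1M(-K) \subseteq K \subseteq M(-K)$, and controlling the constant $M$ tightly enough that $M\sqrt n$ fits inside $n^{5/2}$. The homothety estimate and John's theorem are otherwise standard. In fact the argument gives $M = O(n)$, hence $M\sqrt n = O(n^{3/2})$, so the exponent $5/2$ is not tight; it is convenient mainly because it absorbs the small-dimensional cases uniformly.
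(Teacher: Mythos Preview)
There is a genuine gap in ingredient~(a). You center the homothety at $a_u u$, where $a_u = \max\{t \ge 0: tu \in K\}$ is the \emph{radial} distance to $\partial K$, and take $\lambda = a_u/(a_u + a_{-u})$; you then assert that the scaled copy lies in $H^+(u,0)$. But the minimum of $u\cdot x$ over $K$ is $-h_K(-u)$, and one only has $h_K(-u) \ge a_{-u}$, with strict inequality whenever the boundary point $-a_{-u}u$ does not have outer normal $-u$. In that case the $u$-level of the bottom of your scaled copy is $a_u\bigl(a_{-u} - h_K(-u)\bigr)/(a_u+a_{-u}) < 0$, so the claimed containment in $H^+(u,0)$ fails. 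The remedy is to work with support values instead of radial ones: center the homothety at any $p\in K$ with $u\cdot p = h_K(u)$ and take $\lambda = h_K(u)/\bigl(h_K(u)+h_K(-u)\bigr)$. Then the scaled copy really does sit in $K\cap H^+(u,0)$, and your chain yields $h_K(-u)/h_K(u) \ge (1-\beta)^{-1/n}-1 =: \rho$ for every $u$, which is exactly $sym(K)\ge\rho$ and is equivalent (via $sym(K)=\min_u a_{-u}/a_u=\min_u h_K(u)/h_K(-u)$) to the radial statement you wanted. The verification $\rho \ge n^{-2}$ and all of ingredient~(b) then go through unchanged.

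For comparison, the paper follows the same two-step plan but with different tools in each step. For~(a) it bounds $h_K(u)/h_K(-u)$ by a cone comparison: it traps the two halves of $K$ between slabs of the cone from an extreme point of $K$ through the slice $K\cap H(u,0)$, applies Gr\"unbaum's bound on both sides, and derives a contradiction from $h_K(u)/h_K(-u) < n^{-2}$. For~(b) it does not form $K\cap(-K)$ and invoke John directly; instead it quotes a theorem of Belloni and Freund asserting that $sym(K)\ge n^{-2}$ already guarantees an $n^{5/2}$-rounding ellipsoid centered at the origin. Your route is therefore more self-contained, and as you observe it in fact gives $M = O(n)$ and hence an $O(n^{3/2})$ rounding, sharper than what the paper records (though the paper's cone argument also implicitly delivers $M = O(n)$).
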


\begin{proof}
Without loss of generality we assume that $x^\star=0$.
We prove that for any $u\in S^{n-1}$ 
\begin{equation}\label{chap:centerPoints::equ:ratioCenterPoints}
\frac{1}{n^2}\le\left|\frac{\max_{x\in K}u^\T x}{\min_{x\in K}u^\T x}\right|\le n^2.
\end{equation}

It suffices to show the lower bound, since the upper bound follows from replacing $u$ with $-u$.


Let $\left|\frac{\max_{x\in K}u^\T x}{\min_{x\in K}u^\T x}\right|$ be minimized at $u = \bar u$. Since the arguments below are invariant to scaling, we will assume that 
$\min_{x\in K}\bar u^\T x=-1$ and $\max_{x\in K}\bar u^\T x=:\alpha$, and assume to the contrary that
that $\alpha<\frac{1}{n^2}$.
Let $z:=\argmin_{x\in K}\bar u^\T x$. 
We define for every $t\in\R$ the set $K_t:=K\cap\{x\in\R^n : u^\T x=t\}$.
Further, we define $C:=z+\cone(K_0-z)$, $X_1:=\{x\in\R^n : -1\le u^\T x \le0 \}$ and $X_2=\{x\in\R^n : 0\le u^\T x \le \alpha \}$.
Then $K\cap X_1\supset C\cap X_1\;\text{ and }\;K\cap X_2\subseteq C\cap X_2.$
By Gr\"unbaum's theorem~\cite[Theorem 2]{Gruenbaum1960} we have that $1-\left(\frac{n}{n+1}\right)^n\ge\mu(K\cap X_1)\ge\mu(C\cap X_1)=\frac{1}{n}\frac{V}{\vol(K)}$, where $V$ represents the $(n-1)$-dimensional measure of $K_0$.
On the other hand we have $\left(\frac{n}{n+1}\right)^n\le\mu(K\cap X_2)\le\mu(C\cap X_2)=\big(\frac{(1+\alpha)^n}{n}-\frac{1}{n}\big)\frac{V}{\vol(K)}$.
Combining these two inequalities, we arrive at the inequality
$$\frac{1}{e-1} \leq \frac{(\frac{n}{n+1})^n}{1 -(\frac{n}{n+1})^n} \leq (1+\alpha)^n - 1 \leq \bigg(1 + \frac{1}{n^2}\bigg)^n - 1$$
However, $\bigg(1 + \frac{1}{n^2}\bigg)^n - 1 < \frac{1}{e-1}$ for all $n \geq 2$, leading to a contradiction. 

We now define $$sym(K) = \max\{\alpha \geq 0 : \alpha(-y) \in K \textrm{ for every } y \in K\},$$ a notion that was introduced by Minkowski, and has been extensively studied in convex geometry literature~\cite{belloni2008symmetry,toth2015measures}. Combined with Proposition 1 in~\cite{belloni2008symmetry}, equation~\eqref{chap:centerPoints::equ:ratioCenterPoints} implies that $sym(K) \geq \frac{1}{n^2}$. Then Theorem 7 in~\cite{belloni2008symmetry} shows that there exists an ellipsoid $E$ centered at the origin satisfying $E\subseteq K\subseteq n^{5/2}E.$
%
\end{proof}

\begin{lemma}\label{sec:app:lem:inscribedBox}
There exists a universal constant $\alpha$ such that the following holds for all $n, d \in \N$. Let $K\subseteq\R^{n+d}$ be a convex body and let $x^\star\in\mathcal{C}(\R^n,\mu)$, where $\mu$ is the uniform measure with respect to $K$. If $\w(K|_{\R^n})\ge c n (n+d)^{5/2} \alpha n^{n+1}$ for some $c\in\R_+$,
then there exists a matrix $B=[b_1,\dots,b_{n}]\in\R^{(n+d)\times n}$ such that
$$x^\star + cn B [-1/2,1/2]^{n} \subseteq K.$$
and
$b_1|_{\R^n},\dots,b_{n}|_{\R^n}$ forms a lattice basis of $\Z^n$.
\end{lemma}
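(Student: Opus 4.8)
The plan is to combine the ellipsoidal approximation of Lemma~\ref{chap:centerPoints::lem:centroidHaussdorfdistance} with a lattice-basis-reduction argument applied to the projection onto the integer coordinates. First I would invoke Lemma~\ref{chap:centerPoints::lem:centroidHaussdorfdistance} to obtain an ellipsoid $E$ centered at the origin (after translating so that $x^\star = 0$) with $E \subseteq K \subseteq (n+d)^{5/2} E$. Since $K \subseteq (n+d)^{5/2}E$, the lattice width of $E|_{\R^n}$ is at least $(n+d)^{-5/2}\w(K|_{\R^n}) \ge c n \alpha n^{n+1}$ by hypothesis. Thus the inner ellipsoid $E$, projected to $\R^n$, is an ellipsoid (in $\R^n$) that is "fat" with respect to the integer lattice: its lattice width is large.

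The second step is to extract from $E$ a well-behaved basis of $\Z^n$. Consider the $n$-dimensional ellipsoid $E|_{\R^n}$; equivalently, pull back the standard inner product through the linear map defining $E$ to get a norm on $\R^n$ under which $E|_{\R^n}$ is (contained in) the unit ball and has inradius comparable to its lattice width. Applying a reduced-basis procedure (e.g. a Minkowski-reduced basis, or an LLL-type basis) to $\Z^n$ with respect to this norm yields a lattice basis $v_1, \dots, v_n$ of $\Z^n$ whose norms are, up to a factor depending only on $n$ (something like $\alpha n^{n+1}$, which is exactly why this factor appears in the hypothesis), bounded by the successive minima of $\Z^n$ in this norm, hence by $O(1)$ times the reciprocal of the lattice width being "at least $1$". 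The point is that because the lattice width of $E|_{\R^n}$ exceeds $cn \cdot \alpha n^{n+1}$, a reduced basis of $\Z^n$ fits inside $\tfrac{1}{cn}\cdot\tfrac12 \cdot (E|_{\R^n})$, i.e. the parallelepiped $cn \cdot \operatorname{conv}\{\pm \tfrac12 v_i\}$ projected down still sits inside $E|_{\R^n}$.

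The third step is to lift this basis back into $\R^{n+d}$. For each $v_i \in \Z^n$, pick the preimage $b_i \in \R^{n+d}$ of $v_i$ under the projection $\cdot|_{\R^n}$ that lies in $E$ — more carefully, choose $b_i$ to be the point of the ellipsoid $E$ in the fiber over $v_i$ that is "centered" (so that $\{b_i, -b_i\}$ are balanced), so that $b_1, \dots, b_n$ together with the $d$ continuous coordinate directions span $\R^{n+d}$ and $b_i|_{\R^n} = v_i$ forms a basis of $\Z^n$. Then the box $cn \cdot B[-1/2,1/2]^n$ is a zonotope generated by vectors each of which, scaled by $cn$, still lies in $E$ (by the width bound), and since $E$ is convex and symmetric about $0$, the whole box lies in $E \subseteq K$. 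Translating back gives $x^\star + cnB[-1/2,1/2]^n \subseteq K$.

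The main obstacle I expect is making the constant bookkeeping in the second step precise: one needs the reduced-basis bound to exactly consume the polynomial-times-$n^{n+1}$ slack in the hypothesis on $\w(K|_{\R^n})$, and one must be careful that "reduced with respect to the ellipsoid norm" genuinely guarantees the basis vectors lie in a small multiple of the ellipsoid rather than merely that their lengths are small (these differ because the ellipsoid is not a ball). A clean way to handle this is to apply John's theorem / a linear change of coordinates to make $E|_{\R^n}$ a Euclidean ball, run Minkowski reduction there where length and containment coincide, bound the longest basis vector by $c_n \lambda_n \le c_n / \lambda_1^{\mathrm{dual}}$-type estimates with $c_n = \alpha n^{n+1}$, and then transform back; the hypothesis on $\w$ is precisely what is needed so that $cn$ times this longest vector still lies inside the ball. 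A secondary technical point is checking that the chosen lifts $b_i$ can be taken inside $E$ simultaneously — this follows because $E$ projects onto $E|_{\R^n}$ and each fiber $E \cap (\{v_i\} \times \R^d)$ is nonempty once $v_i \in E|_{\R^n}$, which holds by the width bound.
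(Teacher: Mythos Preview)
Your plan matches the paper's proof almost exactly: invoke Lemma~\ref{chap:centerPoints::lem:centroidHaussdorfdistance} for the sandwich $E\subseteq K\subseteq (n+d)^{5/2}E$, transform $E|_{\R^n}$ to a Euclidean ball, run a lattice reduction on $\Z^n$ in that norm, and lift. The paper uses a Korkine--Zolotarev basis and bounds the last Gram--Schmidt vector directly from the lattice width rather than going through successive minima and transference as you suggest, but the two routes are interchangeable up to the constant $\alpha$.

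There is one genuine slip in your lifting step. You write that the zonotope $cn\,B[-1/2,1/2]^n$ lies in $E$ because ``each generator scaled by $cn$ lies in $E$, and $E$ is convex and symmetric.'' Convexity and central symmetry do \emph{not} imply that a Minkowski sum of segments contained in $E$ stays in $E$; the sum can overshoot by a factor of $n/2$. Two correct ways to close this:
\begin{itemize}
\item Observe that your ``centered'' lifts $b_i$ (the centers of the fibers of $E$ over $v_i$) all lie on a common $n$-dimensional \emph{linear} section $H$ of $E$ which projects bijectively onto $E|_{\R^n}$. This is exactly what the paper does, invoking a standard fact about ellipsoid projections. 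Then the containment $cn\,[v_1,\dots,v_n][-1/2,1/2]^n\subseteq E|_{\R^n}$ (which you do argue) lifts verbatim to $cn\,B[-1/2,1/2]^n\subseteq E\cap H\subseteq E$.
\item Alternatively, keep track of the actual bound coming out of the reduction: the hypothesis $\w(K|_{\R^n})\ge cn(n+d)^{5/2}\alpha n^{n+1}$ yields $\|b_i\|_E\le \frac{2}{cn^2}$, not merely $\le\frac{1}{cn}$. Then the triangle inequality gives $\|cn\,By\|_E\le cn\cdot\frac{n}{2}\cdot\frac{2}{cn^2}=1$ for $y\in[-1/2,1/2]^n$, and the zonotope is in $E$. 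The extra factor of $n$ in the hypothesis (the $n^{n+1}$ rather than $n^n$) is there precisely to pay for this.
\end{itemize}
Either fix is short; the paper takes the first one.
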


\begin{proof}
By Lemma~\ref{chap:centerPoints::lem:centroidHaussdorfdistance}, there exists an ellipsoid $E$ such that $x^\star + E\subseteq K\subseteq x^\star + (n+d)^{5/2}E$.
We define $\phi:\R^n\to\R^n$ as the linear map such that $\phi(E|_{\R^n})=\{x\in\R^n : \|x\|_2\le1\}.$
Let 
$\Lambda:=\phi(\Z^n)$.

Let $\widehat B$ be a matrix whose columns $\widehat B_{\star,1}, \ldots, \widehat B_{\star,n}$ form a Korkine-Zolotarev basis of $\Lambda$ \cite{KorkineZolotareff}.
Then, a well known property is that
\begin{equation}\label{chap:centerPoints::eq:KorkinZolotarev}
\|\widehat B_{\star,1}\|_2\cdots\|\widehat B_{\star,n}\|_2\le \alpha n^n\det(\Lambda)
\end{equation}
(see \cite[Theorem 2.3]{LagariasLenstraSchnorr}),
where $\alpha $ is a universal constant. 
Further, the Gram-Schmidt orthogonalization $\tilde{B}_{\star,1},\dots,\tilde{B}_{\star,n}$ of $\widehat B$ satisfies \begin{equation}\label{eq:gram-schmidt}\|\tilde{B}_{\star,i}\|_2\le\|\widehat B_{\star,i}\|_2 \textrm{ for all }i=1,\dots,n\end{equation} and it holds that \begin{equation}\label{eq:det}\det(\Lambda)=\det(\widehat B)=\det(\tilde{B})=\prod_{i=1}^n\|\tilde{B}_{\star,i}\|_2\end{equation} (see, for example, \cite[Chapter 28]{Gruber-Book07}).
Since $K\subseteq x^\star + (n+d)^{5/2} E$, the definition of the lattice-width $\w(K|_{\R^n})$ implies that \begin{equation}\label{eq:width-bound}\|\tilde{B}_{\star,n}\|_2\le \frac{2(n+d)^{5/2}}{\w(K|_{\R^n})}
\end{equation} Putting all these relations together, for all $j \in \{1, \ldots, n\}$, we have the following:
$$\begin{array}{rcll}
(\prod_{i \neq n} \|\tilde B_{\star,i}\|) \cdot\| \widehat B_{\star,n}\| &\leq &(\prod_{i \neq n} \|\widehat B_{\star,i}\|)\cdot \| \widehat B_{\star,n}\| &\textrm{using~\eqref{eq:gram-schmidt}} \\
& \leq & \alpha n^n\det(\Lambda)& \textrm{using~\eqref{chap:centerPoints::eq:KorkinZolotarev}} \\
& = & \alpha n^n(\prod_{i=1}^n \|\tilde B_{\star,i}\|)& \textrm{using~\eqref{eq:det}} \\
& \leq & (\prod_{i \neq n} \|\tilde B_{\star,i}\|)\alpha n^n \frac{2(n+d)^{5/2}}{\w(K|_{\R^n})} & \textrm{using~\eqref{eq:width-bound}} \\
\end{array}
$$ and thus we obtain
$$\|\widehat{B}_{\star,n}\|_2\le \frac{2\alpha n^n(n+d)^{5/2}}{\w(K|_{\R^n})}.
$$
If we change the order of the columns in $\widehat B$, the equations~\eqref{chap:centerPoints::eq:KorkinZolotarev}-\eqref{eq:width-bound} still hold (with a different $\widehat B_{\star, n}$), and thus 
we obtain a bound on the Euclidean length of all Korkine-Zolotarev vectors, i.e., for all $i=1,\dots,n$
$$\|\widehat B_{\star,i}\|_2\le\frac{2\alpha n^n(n+d)^{5/2}}{\w(K|_{\R^n})}.$$
This implies that 
\begin{equation*}\label{lem:mixedVolume::equ:2}
\frac{1}{n}\frac{1}{\alpha n^n} \frac{\w(K|_{\R^n})}{(n+d)^{5/2}} \widehat B [-1/2,1/2]^n\subseteq \{x\in\R^n : \|x\|_2\le1\} \\
\end{equation*}
Since $\w(K|_{\R^n}) \geq c n (n+d)^{5/2}\alpha n^{n+1}$, we obtain that $$cn\widehat B [-1/2,1/2]^n \subseteq \{x\in\R^n : \|x\|_2\le1\}.$$

We now use the fact that there exists an affine subspace $H \subseteq \R^{n+d}$ of dimension $n$ such that $E|_{\R^n} = (E \cap H)|_{\R^n}$. This follows from a repeated application of Lemma 3 from~\cite{del2016ellipsoidal}. Let $B$ be the matrix whose columns span the linear space parallel to $H$ and these columns project to $\phi^{-1}(\widehat B_{\star,1}), \ldots, \phi^{-1}(\widehat B_{\star,n})$. $B$ now satisfies the condition desired.
\end{proof}

We are now ready to prove Lemma~\ref{lem:mixedVolume}. 

\begin{proof}[Proof of Lemma~\ref{lem:mixedVolume}]
By Lemma~\ref{sec:app:lem:inscribedBox} there exists a matrix $B\in\R^{(n+d)\times n}$ such that $x^\star + cn\, B [-1/2,1/2]^{n} \subseteq K$.
Since $B|_{\R^n}$ is unimodular, we may assume after an unimodular transformation that $B|_{\R^n}$ equals the identity matrix.
After a further volume preserving linear transformation we may even assume that $B$ equals the first $n$ unit vectors in $\R^{n+d}$.
Since $K$ is full dimensional, there exists an $\epsilon>0$ such that $c\,n[-1/2,1/2]^n\times \epsilon[-1/2,1/2]^d\subseteq K - x^\star$. To make the calculations below easy, we translate everything by $-x^\star$, so that we assume below that $x^\star = 0$.

We next exploit that $\lim_{k \to \infty} \frac{1}{k^d} |K \cap (\Z^{n}\times\frac{1}{k}\Z^d)|=\bar\nu(K \cap (\Z^n\times\R^d))$, where $|\cdot|$ denotes the cardinality. 
We will use this fact to prove the following 
\begin{claim}\label{claim:discretize}
\begin{equation*}
\left( 1- \frac{1}{cn} \right)^n \left( 1- \frac{1}{\epsilon k} \right)^d \le \frac{\frac{1}{k^d}|K \cap (\Z^{n}\times\frac{1}{k}\Z^d)|}{ \bar\mu (K)} \le \left( 1+ \frac{1}{cn} \right)^n \left( 1+ \frac{1}{\epsilon k} \right)^d.
\end{equation*}
\end{claim}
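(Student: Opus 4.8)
The plan is to compare the scaled lattice point count $\frac{1}{k^d}|K\cap(\Z^n\times\tfrac1k\Z^d)|$ against $\bar\mu(K)$ by a sandwich argument using translates of the small box $P := cn[-1/2,1/2]^n\times\epsilon[-1/2,1/2]^d$, which we know is contained in $K-x^\star = K$ (after the translation making $x^\star=0$). The right-hand (upper) bound comes from a covering estimate: assign to each lattice point $z\in K\cap(\Z^n\times\tfrac1k\Z^d)$ the half-open ``cell'' $z + [0,1)^n\times[0,\tfrac1k)^d$ of volume $\tfrac1{k^d}$; these cells tile $\R^{n+d}$ and their union covers $K\cap(\Z^n\times\tfrac1k\Z^d)$. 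The union is not contained in $K$, but it is contained in a slightly enlarged convex body. Concretely, since $K$ contains a translate of $P$ at \emph{every} point of $K$ in the sense that $x^\star=0$ is a centerpoint and $P\subseteq K$, one uses convexity: for any $z\in K$ and any point $w$ in the cell $z+[0,1)^n\times[0,\tfrac1k)^d$, one writes $w$ as a convex combination witnessing $w\in (1+\tfrac1{cn})\cdot\bigl(\text{scaled version of }K\bigr)$ by absorbing the displacement (at most $1$ in each of the first $n$ coordinates, at most $\tfrac1k$ in each of the last $d$) into the inscribed box $P$. This yields $\frac{1}{k^d}|K\cap(\Z^n\times\tfrac1k\Z^d)| \le \bar\mu\bigl((1+\tfrac1{cn})^{?}\cdots K\bigr)$; the anisotropic scaling (factor $1+\tfrac1{cn}$ in the $n$ integer directions coming from $cn\cdot[-1/2,1/2]^n\subseteq K$, and factor $1+\tfrac1{\epsilon k}$ in the $d$ continuous directions coming from $\epsilon\cdot[-1/2,1/2]^d\subseteq K$) gives exactly the factor $(1+\tfrac1{cn})^n(1+\tfrac1{\epsilon k})^d$ multiplying $\bar\mu(K)$.

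The left-hand (lower) bound is the dual covering statement: the cells $z+[0,1)^n\times[0,\tfrac1k)^d$ over $z\in K\cap(\Z^n\times\tfrac1k\Z^d)$ must cover a suitably \emph{shrunk} copy of $K$. Precisely, one shows that any point $w$ lying in the body $(1-\tfrac1{cn})$-scaled in the first $n$ coordinates and $(1-\tfrac1{\epsilon k})$-scaled in the last $d$ coordinates has its ``floor'' $z := (\lfloor w_1\rfloor,\dots,\lfloor w_n\rfloor, \tfrac1k\lfloor kw_{n+1}\rfloor,\dots)$ still inside $K$ — again because the rounding displacement is absorbed by the inscribed box $P\subseteq K$ and convexity. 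Hence $\bar\mu\bigl((\text{shrunk }K)\bigr)\le \frac1{k^d}|K\cap(\Z^n\times\tfrac1k\Z^d)|$, which rearranges to the stated lower bound. Dividing through by $\bar\mu(K)$ and using that anisotropic scaling by $(1\pm t)$ in coordinates changes the $(n+d)$-volume by the product of the scaling factors gives the two inequalities.

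The main obstacle — and the step requiring genuine care rather than bookkeeping — is making the ``absorb the rounding error into the inscribed box'' argument precise while the scaling is \emph{anisotropic}: we have $cn[-1/2,1/2]^n\times\epsilon[-1/2,1/2]^d \subseteq K$ and want to conclude that $K$ enlarged by $(1+\tfrac1{cn})$ in the integer directions and $(1+\tfrac1{\epsilon k})$ in the continuous directions contains all the cells. The clean way is: for $w$ in a cell at lattice point $z\in K$, write $w = z + r$ with $r\in[0,1)^n\times[0,\tfrac1k)^d$; then $2r - (\text{center of }P\text{'s relevant face})$ is a small vector that, scaled appropriately, lies in $P\subseteq K$, so $w$ is a convex combination of $z\in K$ and a point of $K$, landing in the stated dilate. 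One must track that the dilation is about the centerpoint $0$ and that the half-open vs. closed box discrepancies (strict inequalities $<1$, $<\tfrac1k$) only help. Once this geometric lemma is stated cleanly the rest is the elementary volume computation and taking $k\to\infty$ afterward (done outside this claim) to pass from the counting measure to $\bar\nu$.
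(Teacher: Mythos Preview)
Your plan is essentially the paper's proof: the paper likewise reinterprets $\tfrac{1}{k^d}|K\cap(\Z^n\times\tfrac1k\Z^d)|$ as the volume of the Minkowski sum of the lattice points with a small cell, and then sandwiches this set between anisotropic dilates $K_{\lambda,\gamma}:=\bigl(\begin{smallmatrix}\lambda I_n & 0\\ 0 & \gamma I_d\end{smallmatrix}\bigr)K$ using that the cell is contained in $K_{1/(cn),\,1/(\epsilon k)}$ (because $cn[-\tfrac12,\tfrac12]^n\times\epsilon[-\tfrac12,\tfrac12]^d\subseteq K$) together with $K_{\lambda_1,\gamma_1}+K_{\lambda_2,\gamma_2}=K_{\lambda_1+\lambda_2,\gamma_1+\gamma_2}$. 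The one cosmetic difference is that the paper uses the \emph{centered} cell $Q=[-\tfrac12,\tfrac12]^n\times[-\tfrac{1}{2k},\tfrac{1}{2k}]^d$ rather than your half-open corner cell, which keeps the displacements in $Q\subseteq K_{1/(cn),1/(\epsilon k)}$ exactly and avoids the stray factor of~$2$ your corner cells would introduce.
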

\begin{proof}
Let $Q:=[-1/2,1/2]^{n}\times[-1/2k,1/2k]^d$.
Further we define 
$\bar{K} := (K \cap (\Z^{n}\times\frac{1}{k}\Z^d)) + Q$ and
$K_{\lambda,\gamma} :=  \left(\begin{smallmatrix} \lambda I_n & 0  \\ 0 & \gamma I_d \end{smallmatrix}\right) K$ for $\lambda,\gamma \in \R_+$. Notice that $K_{\lambda_1,\gamma_1} + K_{\lambda_2,\gamma_2} = K_{\lambda_1 + \lambda_2, \gamma_1 + \gamma_2}$ and $Q \subseteq K_{\frac{1}{cn},\frac{1}{\epsilon k}}$.

Since $\frac{1}{k^d}|K \cap (\Z^{n}\times\frac{1}{k}\Z^d)| = \bar\mu (\bar{K})$, it suffices to show that 
$$ K_{1-\frac{1}{cn},1-\frac{1}{\epsilon k}} \subseteq \bar K \subseteq K_{1+\frac{1}{cn},1+\frac{1}{\epsilon k}}$$
One of the containments follows from the observations that $\bar K \subseteq K + Q$ and $K + Q \subseteq K_{1+\frac{1}{c}n,1+1/\epsilon k}$.
In order to prove $K_{1-\frac{1}{cn},1-\frac{1}{\epsilon k}} \subseteq \bar K$, suppose to the contrary that there exists an $x \in K_{1-\frac{1}{cn},1-\frac{1}{\epsilon k}} \setminus \bar{K}$.
We define $z\in\Z^n\times\frac{1}{k}\Z^d$, such that $z_i=\lfloor x_i \rceil$ for $1 \le i \le n$ and $z_i=\frac{1}{k} \lfloor k\, x_i \rceil$ for $n < i \leq n+d$.\footnote{For $x\in\R$, $\lfloor x \rceil$ denotes the integer $z\in\Z$ such that for each component $|z-x|\le\frac{1}{2}$.}
Then, since $z - x \in Q \subseteq K_{\frac{1}{cn},\frac{1}{\epsilon k}}$ and $x \in K_{1-\frac{1}{cn},1-\frac{1}{\epsilon k}}$, we obtain that $z$ must be in $K \cap (\Z^n\times\frac{1}{k}\Z^d)$. Since $Q$ is symmetric, $x - z$ is also in $Q$, and therefore $x \in z + Q \subseteq K \cap (\Z^n\times\frac{1}{k}\Z^d) + Q = \bar K$, which contradicts the choice of $x$.
\end{proof}

We get the desired inequalities from Claim~\ref{claim:discretize} by letting $k$ go to infinity and using the fact that $(1 + \frac{1}{cn}) ^n \leq e^{\frac{1}{c}}$ and $(1 - \frac{1}{cn}) ^n \geq e^{-\frac{1}{c}}$ for all $c \in \R_+$ and $n \in \N$.
\end{proof}

To complete the proof of Theorem~\ref{thm:boundForLargeLatticeWidth-reform} we introduce the following technical rounding procedure.
Let $K$ be a convex body with a sufficiently large lattice width, i.e., $\w(K)>c n (n+d)^{5/2} \alpha n^n \sqrt{n}$ for some positive integer $c$, where $\alpha$ is the constant from Lemma~\ref{lem:mixedVolume}. Let $\mu$ be the uniform measure on $K$ and let $x^\star\in\mathcal{C}(\R^{n+d},\mu)$. By Lemma~\ref{sec:app:lem:inscribedBox}, there exist $b_i\in (-x^\star+K)\cap(\Z^n\times\R^d)$ for $i=1,\dots,n$ such that $b_1|_{\R^n},\dots,b_{n}|_{\R^n}$ is a lattice basis of $\Z^n$ and $x^\star + cn B [-1/2,1/2]^{n} \subseteq K$.
In addition we define $b_i\in\R^{n+d}$ as the $i$-th unit vector for $i=n+1,\dots,n+d$.
Hence, $b_1,\dots,b_{n+d}$ define a basis of $\R^{n+d}$.

Given $x=\sum_{i=1}^{n+d}\lambda_i b_i\in\R^{n+d}$ with $\lambda_i\in\R$ for all $i$,
we define $[ x ]_K\in\Z^n\times\R^d$ as $\sum_{i=1}^n \lfloor \lambda_i \rceil b_i +\sum_{i=n+1}^{n+d}  \lambda_i b_i$, i.e., we round $x$ to a close mixed-integer point with respect to $K$ (the dependence on $K$ is through Lemma~\ref{sec:app:lem:inscribedBox} which defines the matrix $B$).

\begin{theorem}\label{thm:boundForLargeLatticeWidth} Let $\alpha$ be the constant from Lemma~\ref{lem:mixedVolume}. Let $\nu:=\mu_{mixed,K}$, where  $K \subseteq \R^{n+d}$ is a  convex body and $\nu(K)\neq 0$, and let $x^\star$ be the centerpoint with respect to $\mu$, the uniform measure on $K$.
If $\omega(K|_{\R^n}) > 2 c n (n+d)^{5/2} \alpha n^{n+1}$ for some $c \in \R_+$, then
$$f_{\nu}([ x^\star ]_K)\ge e^{-\frac{1}{c}}\mathcal{F}(\R^{d+n},\mu) + e^{-\frac{2}{c}} -1.$$ 
\end{theorem}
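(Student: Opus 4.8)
The plan is to transfer the Grünbaum-type bound for the continuous centroid to the rounded mixed-integer point $[x^\star]_K$ by controlling the discrepancy between $\nu$ and the normalized $(n+d)$-dimensional uniform measure $\mu$ on $K$ via Lemma~\ref{lem:mixedVolume}. Fix a unit direction $u \in \mathcal{S}^{n+d-1}$; I want a lower bound on $\nu(H^+(u, [x^\star]_K))$ that is independent of $u$. The first step is to relate the halfspace $H^+(u, [x^\star]_K)$ to the halfspace $H^+(u, x^\star)$ through $x^\star$: since $x^\star + cnB[-1/2,1/2]^n \subseteq K$ and $[x^\star]_K$ differs from $x^\star$ by a vector of the form $\sum_{i=1}^n (\lfloor \lambda_i \rceil - \lambda_i) b_i$ with each coefficient in $[-1/2,1/2]$, the point $[x^\star]_K$ lies in the shrunken box $x^\star + \tfrac{1}{2cn}\cdot cnB[-1/2,1/2]^n$, hence deep inside $K$. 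Consequently $H^+(u,[x^\star]_K)$ contains $H^+(u,x^\star)$ minus a "slab'' of $K$ of width at most the box half-width in direction $u$, so $\mu(H^+(u,[x^\star]_K)) \ge \mu(H^+(u,x^\star)) - (\text{slab fraction})$. A cleaner route, and the one I would actually take, is: by Grünbaum applied to $x^\star \in \mathcal{C}(\R^{n+d},\mu)$ we have $\mu(H^+(u,x^\star)) \ge \mathcal{F}(\R^{d+n},\mu)$; and the box inclusion shows the portion of $K$ between the two parallel hyperplanes through $x^\star$ and $[x^\star]_K$ has $\mu$-measure at most $1 - \mathcal{F}(\R^{d+n},\mu)$ times... — actually simpler: $\mu(H^+(u,[x^\star]_K)) \ge \mu(H^+(u,x^\star)) - \mu(S)$ where $S$ is that slab, and one bounds $\mu(S)$ directly. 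I expect the crispest bookkeeping is to just say $\mu(H^-(u,[x^\star]_K)) \le \mu(H^-(u,x^\star)) + \mu(\text{slab})$ and feed this in at the end; I will choose whichever sign makes the arithmetic in the final inequality line up with $e^{-1/c}\mathcal{F} + e^{-2/c} - 1$.

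The second and main step is the measure-comparison. Write $\bar\nu = \bar\mu_{mixed}$ and $\bar\mu = \vol_{n+d}$, both unnormalized. For any halfspace $H$, Lemma~\ref{lem:mixedVolume} applies to both $K \cap H$ and $K$ (the lattice width of $(K\cap H)|_{\R^n}$ need not be large, so I must be careful — the lemma as stated needs $\omega((K\cap H)|_{\R^n})$ large, which fails for thin slabs). To get around this I would instead apply the lemma only to $K$ itself and to $K \cap H^+(u,x^\star)$: because $x^\star$ is the centroid-like centerpoint, Grünbaum guarantees $\mu(H^+(u,x^\star)) \ge \mathcal{F}(\R^{n+d},\mu) \ge e^{-1}$, so $K \cap H^+(u,x^\star)$ carries a constant fraction of the volume of $K$, and since the inscribed box $x^\star + cnB[-1/2,1/2]^n$ is centrally symmetric about $x^\star$, at least half of it (a translate of $\tfrac12 cnB[-1/2,1/2]^n$, still a box of the required size after absorbing the factor $2$ in the hypothesis $\omega(K) > 2cn(n+d)^{5/2}\alpha n^{n+1}$) lies in $K \cap H^+(u,x^\star)$; hence $\omega((K\cap H^+(u,x^\star))|_{\R^n})$ is still at least $cn(n+d)^{5/2}\alpha n^{n+1}$ and Lemma~\ref{lem:mixedVolume} applies. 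This is exactly why the hypothesis here has the extra factor of $2$ compared to Lemma~\ref{lem:mixedVolume}.

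The third step assembles the estimate. Using Lemma~\ref{lem:mixedVolume} on $K$ gives $\bar\mu(K) \le e^{1/c}\bar\nu(K\cap(\Z^n\times\R^d))$, and on $K\cap H^+(u,x^\star)$ gives $\bar\nu\big((K\cap H^+(u,x^\star))\cap(\Z^n\times\R^d)\big) \ge e^{-1/c}\bar\mu(K\cap H^+(u,x^\star))$. Dividing and using $\nu(A) = \bar\nu(A\cap(\Z^n\times\R^d))/\bar\nu(K\cap(\Z^n\times\R^d))$ and $\mu(A) = \bar\mu(A)/\bar\mu(K)$:
\[
\nu(H^+(u,x^\star)) \;\ge\; e^{-1/c}\,e^{-1/c}\,\mu(H^+(u,x^\star)) \;=\; e^{-2/c}\,\mu(H^+(u,x^\star)).
\]
That by itself gives $e^{-2/c}\mathcal{F}$, not the claimed $e^{-1/c}\mathcal{F} + e^{-2/c} - 1$, so I must have the right split: write $\mu(H^+(u,x^\star)) = 1 - \mu(H^-(u,x^\star))$, apply Lemma~\ref{lem:mixedVolume} only on $K$ (factor $e^{-1/c}$) in the form $\bar\nu \ge e^{-1/c}\bar\mu$ and on the complementary half $K\cap H^-$ in the form $\bar\nu \le e^{1/c}\bar\mu$ — the latter again requiring its lattice width to be large, which holds by the same symmetry argument applied with $-u$. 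Then
\[
\nu(H^+(u,x^\star)) \;=\; 1 - \nu(H^-(u,x^\star)) \;\ge\; 1 - e^{-1/c}\cdot e^{1/c}\cdot\frac{\bar\mu(K\cap H^-(u,x^\star))}{\bar\mu(K)}
\]
is too lossy; the correct accounting that produces $e^{-1/c}\mathcal{F}+e^{-2/c}-1$ is to bound $\nu(H^+(u,[x^\star]_K))$ below by $\nu(H^+(u,x^\star)) - \nu(\text{slab})$, estimate $\nu(H^+(u,x^\star)) \ge e^{-2/c}\mathcal{F}$ as above using the $H^+$-half being fat, and estimate $\nu(\text{slab}) \le$ something like $1 - e^{-1/c}\mathcal{F} - e^{-2/c}$ — wait, cleaner: bound $\nu(H^+(u,[x^\star]_K)) \ge \nu(H^+(u,x^\star)) + \nu(H^-(u,x^\star)) - \nu(K\setminus H^+(u,[x^\star]_K) \cup \dots)$. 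I will reconcile the constants by writing $\mathcal{F}(\R^{d+n},\mu) = \mu(H^+(u,x^\star))$ lower bound, transferring $H^+$ with one factor $e^{-1/c}$ for the numerator-vs-$K$ comparison and noting the rounding slab has $\mu$-measure at most $1-\mathcal{F}$, transferring that with a factor $e^{1/c}$ cancelled against an $e^{-1/c}$, leaving the stated bound; the bookkeeping of exactly which halfspace absorbs which exponential factor is where I expect to spend the most care.

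The main obstacle, then, is not any single hard inequality but the careful routing of the two one-sided estimates from Lemma~\ref{lem:mixedVolume} so that (a) every convex body to which the lemma is applied genuinely has large lattice width — handled by the central symmetry of the inscribed box and the factor-$2$ slack in the hypothesis — and (b) the exponential factors combine to exactly $e^{-1/c}\mathcal{F}+e^{-2/c}-1$ rather than a weaker constant. Once the routing is fixed, taking the infimum over $u$ gives $f_\nu([x^\star]_K) \ge e^{-1/c}\mathcal{F}(\R^{d+n},\mu) + e^{-2/c} - 1$, which is the claim.
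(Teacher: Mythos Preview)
Your proposal has the right ingredients—Lemma~\ref{lem:mixedVolume} for the $\nu$-versus-$\mu$ comparison, Lemma~\ref{sec:app:lem:inscribedBox} for the rounding—but, as you yourself concede, you cannot route the estimates so that the constants combine to $e^{-1/c}\mathcal{F}+e^{-2/c}-1$. The paper's proof resolves this with a decomposition that differs from yours in two essential ways.

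First, the measure comparison is established \emph{uniformly over all halfspaces}, in the additive form
\[
\nu(H^+)\;\ge\;\mu(H^+)+e^{-2/c}-1\qquad\text{for every halfspace }H^+,
\]
not only for halfspaces through $x^\star$. The argument does not appeal to the central symmetry of the inscribed box; instead one simply observes that for any hyperplane, at least one of $(K\cap H^+)|_{\R^n}$, $(K\cap H^-)|_{\R^n}$ has lattice width at least $\omega(K|_{\R^n})/2$—this is where the factor $2$ in the hypothesis is spent. Lemma~\ref{lem:mixedVolume} is then applied to $K$ and to whichever half is fat. If $K\cap H^-$ is the fat one, write $\bar\nu(K\cap H^+)=\bar\nu(K)-\bar\nu(K\cap H^-)\ge e^{-1/c}\bar\mu(K)-e^{1/c}\bar\mu(K\cap H^-)$ and divide by $\bar\nu(K)\le e^{1/c}\bar\mu(K)$; if $K\cap H^+$ is fat, you get $\nu(H^+)\ge e^{-2/c}\mu(H^+)$ directly and then use $\mu(H^+)\le 1$ to convert this into the same additive inequality. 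Either branch lands on the displayed bound, and now it can be applied with $H^+=H^+(u,[x^\star]_K)$, the halfspace through the \emph{rounded} point.

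Second, the rounding is controlled by a \emph{scaling} inclusion rather than a slab estimate. After the coordinate changes coming from Lemma~\ref{sec:app:lem:inscribedBox} (so that $B$ consists of the first $n$ unit vectors and $x^\star=0$), one has $[x^\star]_K\in K_{\frac{1}{cn},0}$ and hence $[x^\star]_K+K_{1-\frac{1}{cn},1}\subseteq K$. This immediately yields, for every $u$,
\[
\mu\big(H^+(u,[x^\star]_K)\big)\;\ge\;\Big(1-\tfrac{1}{cn}\Big)^{n}\,\mu\big(H^+(u,x^\star)\big)\;\ge\;e^{-1/c}\,\mu\big(H^+(u,x^\star)\big),
\]
a clean multiplicative loss in the $\mu$-world.

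Chaining the two displays and using $\mu(H^+(u,x^\star))\ge\mathcal{F}(\R^{n+d},\mu)$ gives the theorem. The reason your bookkeeping stalls is that you apply the measure comparison at $x^\star$ and then try to absorb the rounding afterwards in $\nu$; the paper does the opposite—measure comparison at $[x^\star]_K$ (legitimate because the comparison is universal) and rounding purely in $\mu$, where the scaling trick is available. Once you separate the two steps this way, there is no routing ambiguity left.
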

Gr\"unbaum's Theorem implies then, that $\mathcal{F}(\Z^n\times\R^d,\nu)\ge e^{-\frac{1}{c}-1} + e^{-\frac{2}{c}} -1$, giving us Theorem~\ref{thm:boundForLargeLatticeWidth-reform}.
\begin{proof}[Proof of Theorem~\ref{thm:boundForLargeLatticeWidth}] As before, let $\bar \mu$ denote the $(d+n)$-dimensional Lebesgue measure with respect to $K$ and let $\bar \nu$ denote the $d$-dimensional Lebesgue measure with respect to $K\cap(\Z^n\times\R^d)$, i.e. they are not normalized.

In a first step we prove the following claim:
\begin{claim}\label{claim:half-space-approx} For any half-space $H^+$, $$\nu(H^+) \geq \mu(H^+) + e^{-\frac{2}{c}} - 1.$$\end{claim}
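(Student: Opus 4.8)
The plan is to relate the un-normalized measures $\bar\nu$ and $\bar\mu$ on half-spaces using Lemma~\ref{lem:mixedVolume}, and then re-normalize. First I would fix a closed half-space $H^+ = H^+(u,x)$ and write $H^- = H^-(u,x)$ for its complementary half-space (up to the measure-zero hyperplane $H(u,x)$, which contributes nothing to either measure — this uses Remark~\ref{rem:minimum} or a direct argument that a hyperplane has $\bar\mu$-measure zero and, since $\w(K|_{\R^n})$ is large, $\bar\nu$-measure zero as well). Both $K\cap H^+$ and $K\cap H^-$ are convex bodies (assuming both have nonempty interior; the degenerate case where one of them is lower-dimensional is easy and handled separately). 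The key point is that each of these two pieces still has large lattice width: indeed $\w((K\cap H^+)|_{\R^n})$ could in principle be small, so here is where I expect the \emph{main obstacle} — a half-space cut through $K$ can destroy the lattice-width hypothesis on one of the two pieces. I would resolve this by \emph{not} applying Lemma~\ref{lem:mixedVolume} directly to $K \cap H^+$, but instead arguing as follows.

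Apply Lemma~\ref{lem:mixedVolume} to $K$ itself, giving
$$
e^{-\frac1c}\,\bar\mu(K)\;\le\;\bar\nu(K\cap(\Z^n\times\R^d))\;\le\;e^{\frac1c}\,\bar\mu(K).
$$
For the half-space piece, I want an inequality of the form $\bar\nu((K\cap H^-)\cap(\Z^n\times\R^d)) \le e^{\frac1c}\,\bar\mu(K\cap H^-)$. Rather than invoking the lemma on $K\cap H^-$, I would use the box inscribed in $K$ from Lemma~\ref{sec:app:lem:inscribedBox}: there is a lattice basis $b_1,\dots,b_n$ of $\Z^n$ (extended by unit vectors $b_{n+1},\dots,b_{n+d}$) such that $x^\star + cn\,B[-1/2,1/2]^n \subseteq K$, and after the corresponding unimodular/volume-preserving change of coordinates the fibers are axis-parallel and spaced $1$ apart. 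In these coordinates, the standard sandwiching argument from the proof of Claim~\ref{claim:discretize} — comparing $(K\cap H^-) \cap (\Z^n\times\R^d)$ with its ``thickening'' by the fundamental cell and using $Q \subseteq K_{1/(cn),\,0}$-type scalings relative to the \emph{large} body $K$ (whose shape controls the error, not the possibly-thin slab) — yields
$$
\bar\nu\big((K\cap H^-)\cap(\Z^n\times\R^d)\big)\;\le\;e^{\frac1c}\,\bar\mu(K\cap H^-),
$$
because the multiplicative distortion introduced by rounding to the lattice is governed by $(1\pm\frac1{cn})^n \le e^{\pm 1/c}$, exactly as in Claim~\ref{claim:discretize}, and this bound does not require $K\cap H^-$ to be fat — only that $K$ is, so that the rounding map stays inside a mildly dilated copy of $K$.

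Now I would assemble the estimate. Using $\bar\mu(K) = \bar\mu(K\cap H^+) + \bar\mu(K\cap H^-)$ and the same for $\bar\nu$,
$$
\nu(H^+) \;=\; \frac{\bar\nu((K\cap H^+)\cap(\Z^n\times\R^d))}{\bar\nu(K\cap(\Z^n\times\R^d))}
\;\ge\; \frac{\bar\nu(K\cap(\Z^n\times\R^d)) - e^{\frac1c}\bar\mu(K\cap H^-)}{\bar\nu(K\cap(\Z^n\times\R^d))}
\;=\; 1 - \frac{e^{\frac1c}\,\bar\mu(K\cap H^-)}{\bar\nu(K\cap(\Z^n\times\R^d))}.
$$
Then bound the denominator below by $e^{-\frac1c}\bar\mu(K)$ from Lemma~\ref{lem:mixedVolume}, so the subtracted term is at most $e^{\frac2c}\,\frac{\bar\mu(K\cap H^-)}{\bar\mu(K)} = e^{\frac2c}\,\mu(H^-) = e^{\frac2c}(1-\mu(H^+))$. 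Therefore
$$
\nu(H^+)\;\ge\; 1 - e^{\frac2c}\big(1-\mu(H^+)\big)\;=\; \mu(H^+) + \big(1 - e^{\frac2c}\big) + \big(e^{\frac2c}-1\big)\mu(H^+).
$$
Hmm — this has the wrong sign on the $e^{2/c}$; I would instead keep the symmetric two-sided bounds throughout (upper bound $\bar\nu \le e^{1/c}\bar\mu$ on the \emph{small} piece $K\cap H^+$ and lower bound $\bar\nu \ge e^{-1/c}\bar\mu$ on $K\cap H^-$, together with the two-sided bound on $K$), and optimize which direction to use on which piece so that the final constant comes out as $e^{-2/c}-1$. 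Concretely: $\nu(H^+) \ge \frac{e^{-1/c}\bar\mu(K\cap H^+)}{e^{1/c}\bar\mu(K)} = e^{-2/c}\mu(H^+) \ge \mu(H^+) - (1-e^{-2/c})\mu(H^+) \ge \mu(H^+) + e^{-2/c} - 1$, where the last step uses $\mu(H^+)\le 1$. This is exactly the claimed bound, and the only non-routine ingredient is the lattice-width-free sandwiching for the half-space piece described above; everything else is bookkeeping with the exponential estimates $(1\pm\frac1{cn})^n$.
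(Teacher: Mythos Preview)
Your final displayed computation
\[
\nu(H^+)\;\ge\;\frac{e^{-1/c}\,\bar\mu(K\cap H^+)}{e^{1/c}\,\bar\mu(K)}\;=\;e^{-2/c}\mu(H^+)\;\ge\;\mu(H^+)+e^{-2/c}-1
\]
is precisely one of the two cases in the paper's proof, and is correct \emph{once you know} that $\bar\nu(K\cap H^+)\ge e^{-1/c}\bar\mu(K\cap H^+)$. The gap is that this inequality genuinely requires $(K\cap H^+)|_{\R^n}$ to have large lattice width; your ``lattice-width-free sandwiching'' does not hold. A slab of width $<1$ lying strictly between two consecutive lattice hyperplanes has $\bar\nu=0$ and $\bar\mu>0$, killing the lower bound; a slab of small width containing a lattice hyperplane has $\bar\nu$ bounded away from $0$ but $\bar\mu$ arbitrarily small, killing the upper bound. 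The mechanism in Claim~\ref{claim:discretize} is that $Q\subseteq K_{1/(cn),1/(\epsilon k)}$, and this inclusion is a statement about the body being sandwiched, not about some ambient body --- so $(K\cap H^-)+Q$ lands in $(K\cap H^-)+K_{1/(cn),\cdot}$, which is not a dilation of $K\cap H^-$.

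The paper resolves this with a case split you did not find: since $(K\cap H^+)|_{\R^n}\cup(K\cap H^-)|_{\R^n}=K|_{\R^n}$, at least one of the two projected pieces has lattice width at least $\omega(K|_{\R^n})/2$; this is exactly why the hypothesis of Theorem~\ref{thm:boundForLargeLatticeWidth} carries the factor $2$. If $K\cap H^+$ is the fat piece, run your final computation. If $K\cap H^-$ is the fat piece, apply Lemma~\ref{lem:mixedVolume} to $K$ and to $K\cap H^-$ and compute
\[
\nu(H^+)=\frac{\bar\nu(K)-\bar\nu(K\cap H^-)}{\bar\nu(K)}\;\ge\;\frac{e^{-1/c}\bar\mu(K)-e^{1/c}\bar\mu(K\cap H^-)}{e^{1/c}\bar\mu(K)}\;=\;\mu(H^+)+e^{-2/c}-1.
\]
Note that this is \emph{not} the same as your first attempt $1-e^{2/c}\mu(H^-)$: the point is to bound the numerator $\bar\nu(K)-\bar\nu(K\cap H^-)$ as a whole and then divide by the \emph{upper} bound $\bar\nu(K)\le e^{1/c}\bar\mu(K)$, rather than writing $1-\bar\nu(K\cap H^-)/\bar\nu(K)$ and bounding the fraction. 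That algebraic rearrangement is what fixes the sign problem you ran into.
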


\begin{proof}[Proof of Claim~\ref{claim:half-space-approx}]
Let $H^+$ be any half-space 
and let $H^-$ denote its closed complement.
The lattice-width of either $(K\cap H^+)|_{\R^n}$ or $(K\cap H^-)|_{\R^n}$ is larger or equal than $\omega(K|_{\R^n})/2$.
If $\w(K\cap H^-)\ge  c n (n+d)^{5/2} \alpha n^n \sqrt{n}$, then, by Lemma~\ref{lem:mixedVolume},
\begin{align*}
\nu(H^+)=\frac{\bar \nu(K \cap H^+)}{\bar \nu(K)}&\ge \frac{e^{-\frac{1}{c}}\bar\mu(K) - e^{\frac{1}{c}}\bar\mu(K\cap H^-)}{e^{\frac{1}{c}}\bar\mu(K)}\\
&= \frac{\bar \mu(K\cap H^+)}{\bar\mu(K)}+\frac{(e^{-\frac{1}{c}}-e^{\frac{1}{c}})\bar\mu(K)}{e^{\frac{1}{c}}\bar\mu(K)}\\
&= \mu(H^+) + e^{-\frac{2}{c}} - 1.
\end{align*}
If $\w(K\cap H^+)\ge  c n (n+d) \alpha n^n \sqrt{n}$, then, by Lemma~\ref{lem:mixedVolume},
\begin{align*}
\nu(H^+)=\frac{\bar \nu(K \cap H^+)}{\bar \nu(K)}&\ge \frac{e^{-\frac{1}{c}}\bar\mu(K\cap H^+)}{e^{\frac{1}{c}}\bar\mu(K)}\\
&= e^{-\frac{2}{c}}\mu(H^+) \\
& = \mu(H^+) - (1 - e^{-\frac2c})\mu(H^+) \\
& \ge \mu(H^+) + e^{-\frac2c} + 1. 
\end{align*}
The last inequality holds since $\mu(H^+) \le 1$ and $1 - e^{-\frac2c} \geq 0$.
\end{proof}

In the second step we bound the error made by rounding the $x^\star$ to $[ x^\star ]_K$. Note that this is done with respect to a matrix $B$ that is returned from Lemma~\ref{sec:app:lem:inscribedBox}. We first make a unimodular transformation so that $B|_{\R^n}$ is the identity and then make an affine transformation so that $B$ consists of the first $n$ unit vectors in $\R^{n+d}$. To make a further simplification, we translate everything by $-x^\star$ so that $x^\star = 0$.

Define $K_{\lambda, \gamma} := \left(\begin{smallmatrix} \lambda I_n & 0  \\ 0 & \gamma I_d \end{smallmatrix}\right) K$ for any $\lambda, \gamma \geq 0$. Since we assume $x^\star = 0$, our rounding procedure implies that $[x^\star] \in B[-1/2,1/2]^n$. By Lemma~\ref{sec:app:lem:inscribedBox}, this implies that $[x^\star] \in K_{\frac{1}{cn},0}$. Therefore, $[x^\star] + K_{1 - \frac{1}{cn},1} \subseteq K$. This implies that for any $u\in \mathcal{S}^{n+d-1}$, $\mu(H^+(u,[ x^\star ]_K)) \ge (1 - \frac{1}{cn})^n \mu(H^+(u,x^\star))  \geq e^{-\frac{1}{c}} \mu(H^+(u,x^\star))$.

Together with the previous claim it follows that
$$\begin{array}{rcl} f_{\nu}([ x^\star ]_K) & = & \max_{u\in \mathcal{S}^{n+d-1}} \nu(H^+(u,[ x^\star ]_K)) \\
& \geq & \max_{u\in \mathcal{S}^{n+d-1}} \mu(H^+(u,[ x^\star ]_K)) + e^{-\frac{2}{c}} - 1 \\
& \geq & \max_{u\in \mathcal{S}^{n+d-1}} e^{-\frac{1}{c}}\mu(H^+(u,x^\star)) + e^{-\frac{2}{c}} - 1 \\
&\ge &e^{-\frac{1}{c}}\mathcal{F}(\R^{d+n},\mu) + e^{-\frac{2}{c}} -1.\end{array}$$ 
This completes the proof.
\end{proof}

\section{Computational Aspects}\label{sec:algorithms}

All our algorithms discussed in this section are under the standard Turing machine model of computation. We say that $x \in S$ is an $\epsilon$-centerpoint for $S, \mu$, if $f_\mu(x) \geq \F(S,\mu) - \epsilon$ where $\F(S,\mu)$ is defined in~\eqref{eq:max-value} and $f_\mu$ is defined in~\eqref{eq:f}. 

A central tool in the following algorithms for computing (approximate) centerpoints is solving convex mixed-integer optimization problems. The classical result here is due to Gr\"otschel et al.~\cite{GroetschelLovaszSchrijver-Book88}. This classical algorithm requires an access to a first-order oracle for the convex function at all points in $\R^n$. It can be modified to solve the problem with access to a first-order oracle that only queries mixed-integer points (as opposed to any point in $\R^n$). This modification will be useful for us in this section, in particular, for Theorem~\ref{chap:centerPoints::thm:2dAlg}. For completeness, we give a description of the result most amenable for our purposes; an appropriate reference for this version is~\cite{OertelWagnerWeismantel14}.

\begin{theorem}\label{thm:timm-conv-min}
Let $S = \Z^n\times \R^d$, $B \ge 0$ and $\epsilon>0$. Let $f:\R^n \times \R^d \to \R$ be a quasi-concave function equipped with an oracle such that there exists $\delta \leq (C\frac{\epsilon}{B})^{n+d}$ for some universal constant $C$ independent of $B,n,d,\epsilon$ with the following property. For any point $(\bar x, \bar y) \in S$, the oracle returns an approximate function value $\bar f$ and an approximate separation vector $\bar u \in \mathcal{S}^{n+d-1}$ with the following guarantees:
\begin{itemize}
\item[(i)] There exists an optimal solution in $\argmax_{x\in S}f(x) $ with norm at most $B$,
\item[(ii)] $|f(\bar x, \bar y) - \bar f| \leq \delta$,
\item[(iii)] $\| u - \bar u\|_\infty \leq \delta$ for some $u \in \mathcal{S}^{n+d-1}$ satisfying  $\{(x,y)\in S : f(x,y)\ge f(\bar x, \bar y)\} \subseteq \{(x,y): u\cdot(x,y) \leq u\cdot(\bar x, \bar y)\}$.
\end{itemize}
Then there is an algorithm that computes a point $x^* \in S$ such that $ \max_{x\in S}f(x) - f(x^*) \leq \epsilon$. Moreover, if $n$ is fixed, the algorithm runs in time polynomial in $\log(B)$, $\log(\frac{1}{\epsilon})$ and $d$.
\end{theorem}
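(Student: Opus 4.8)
The plan is to reduce the stated robust/approximate version to the classical mixed-integer convex optimization result of Gr\"otschel, Lov\'asz and Schrijver via a Lenstra-type recursion on $n$, controlling throughout how the oracle inaccuracies $\delta$ propagate. I would first observe that since $f$ is quasi-concave, each superlevel set $L_t := \{(x,y) \in \R^{n+d} : f(x,y) \ge t\}$ is convex, so the exact oracle described in (i)--(iii) is exactly a (weak) separation oracle for these convex sets, and the approximate oracle is a weak separation oracle in the sense of~\cite{GroetschelLovaszSchrijver-Book88}. The boundedness hypothesis (i) gives us a ball of radius $B$ guaranteed to contain an optimizer, which serves as the initial enclosing set $E_0$. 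The main engine is then the standard observation that one can binary-search on the optimal value $t^\star = \max_{x \in S} f(x)$: for a fixed threshold $t$, feasibility of $L_t \cap S$ is itself a convex mixed-integer feasibility problem, solvable by a Lenstra-type algorithm, namely apply the ellipsoid/flatness-theorem machinery in the continuous variables combined with the lattice-width argument in the integer variables, recursing on lower-dimensional slices when the lattice width of the current body is small.

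The key steps, in order, are: (1) set up the weak separation oracle for $L_t \cap S$ from the given approximate oracle, tracking that an error of $\delta$ in the function value translates to querying $L_{t \pm \delta}$ rather than $L_t$, and an error of $\delta$ in the separating direction $\bar u$ only weakens the separation guarantee by a controlled amount once we also shrink/inflate by a factor depending on $B$; (2) invoke the mixed-integer version of the GLS algorithm (as in~\cite{OertelWagnerWeismantel14}) to decide, for a given $t$, whether $L_t \cap S \ne \emptyset$ and to produce a point if so — this is where the hypothesis $\delta \le (C \epsilon/B)^{n+d}$ is used, since the flatness-theorem bounds force the enclosing ellipsoid's eccentricity (and hence the required oracle precision) to scale like an inverse power of $B$ in each of the $n+d$ dimensions; (3) binary search on $t$ over the interval $[f(\bar x,\bar y) - \delta, \text{(upper bound from (i))}]$, halving the interval a number of times logarithmic in $B/\epsilon$, so that the returned point $x^\star$ satisfies $\max_{x \in S} f(x) - f(x^\star) \le \epsilon$ after accounting for the accumulated $O(\delta)$ slack; (4) assemble the running time: each feasibility call is polynomial in $\log B$, $\log(1/\epsilon)$, and $d$ for fixed $n$ (the $n$ enters only through the depth of the Lenstra recursion and the constant in the flatness theorem), and there are $O(\log(B/\epsilon))$ calls, giving the claimed complexity.

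The main obstacle I expect is \emph{step (2) together with the bookkeeping in (1)}: one must verify carefully that the mixed-integer Lenstra-type algorithm genuinely only needs the approximate oracle with precision $\delta = (C\epsilon/B)^{n+d}$, i.e., that the errors in the function evaluation and in the separating hyperplane do not compound uncontrollably across the recursive lower-dimensional subproblems or across the iterations of the ellipsoid method inside each subproblem. The clean way to handle this is to never ask the algorithm to distinguish $L_t$ from $L_{t'}$ with $|t - t'|$ below the resolution $\delta$, and to always separate with a hyperplane pushed out by a margin comparable to $B\delta$, so that a point certified feasible for the perturbed body $L_{t-\O(B\delta)}$ is returned; choosing $C$ small enough absorbs all these slacks into the final $\epsilon$. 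Once this robustness is established, the rest is the standard reduction and I would simply cite~\cite{GroetschelLovaszSchrijver-Book88,OertelWagnerWeismantel14} for the underlying algorithm and its complexity bound, rather than re-deriving the flatness-theorem estimates.
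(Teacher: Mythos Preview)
The paper does not prove this theorem. It is stated as a known result, introduced with the remark that ``an appropriate reference for this version is~\cite{OertelWagnerWeismantel14}'' (with~\cite{GroetschelLovaszSchrijver-Book88} as the classical predecessor), and is then used as a black box in the algorithms of Section~\ref{sec:algorithms}. There is therefore no proof in the paper to compare your proposal against.

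Your outline is a reasonable sketch of how the cited result is actually established --- quasi-concavity gives convex superlevel sets, a weak separation oracle for these sets is assembled from the approximate evaluation/separation data, and a Lenstra-type recursion combined with binary search on the level $t$ yields the algorithm --- and you correctly identify~\cite{GroetschelLovaszSchrijver-Book88,OertelWagnerWeismantel14} as the places where the details live. Since the paper itself is content to cite the result, your final sentence (``simply cite \ldots\ rather than re-deriving'') is in fact exactly what the paper does; the preceding two paragraphs of your proposal go further than the paper requires.
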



\subsection{Exact Algorithms}\label{sec:exact} \subsubsection{Uniform measure on polytopes}\label{subsubsection:UniformMeasureOnPolytopes}

Since the rationality of the centerpoint for uniform measures on rational polytopes is an open question, we consider an ``exact'' algorithm as one which returns an $\epsilon$-centerpoint and runs in time polynomial in $\log(\frac1\epsilon)$ and the size of the description of the rational polytope.

\begin{theorem}\label{thm:exact-uniform-algo}
Let $n$ be a fixed natural number. 
There is an algorithm which takes as input a rational polytope $P \subseteq \R^n$ and $\epsilon > 0$, and returns an $\epsilon$-centerpoint for $S=\R^n$ and $\mu$, the uniform measure on $P$. The algorithms runs in time polynomial in the size of an irredundant description of $P$ and $\log(\frac1\epsilon)$.
\end{theorem}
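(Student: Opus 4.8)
The plan is to reduce the computation of an $\epsilon$-centerpoint to a mixed-integer (in fact, purely continuous) convex optimization problem of the form handled by Theorem~\ref{thm:timm-conv-min}, applied with $n$ fixed, $d = 0$, and $S = \R^n$. The function to be maximized is the Tukey depth function $f_\mu(x) = \inf_{u\in\mathcal{S}^{n-1}}\mu(H^+(u,x))$, which by Lemma~\ref{quasi-concave} is quasi-concave and upper semicontinuous, and whose maximizers are exactly the centerpoints $\mathcal{C}(\R^n,\mu)$; by Remark~\ref{rem:maximum} the maximum is attained and the superlevel sets are compact, so hypothesis (i) of Theorem~\ref{thm:timm-conv-min} holds with a bound $B$ that is polynomial in the size of $P$ (one can take $B$ to be any bound on the coordinates of vertices of $P$). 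So the entire task is to construct the approximate evaluation-and-separation oracle required by Theorem~\ref{thm:timm-conv-min}: given a query point $\bar x \in \R^n$, produce $\bar f$ with $|f_\mu(\bar x) - \bar f|\le\delta$ and $\bar u\in\mathcal{S}^{n-1}$ within $\delta$ of a true separating direction for the superlevel set $\{f_\mu \ge f_\mu(\bar x)\}$, where $\delta = (C\epsilon/B)^n$.

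The key step is therefore the \emph{inner} problem: for a fixed query point $\bar x$, compute (approximately) $f_\mu(\bar x) = \min_{u\in\mathcal{S}^{n-1}}\mathrm{vol}(P\cap H^+(u,\bar x))/\mathrm{vol}(P)$ and a near-optimal direction $u^\star$. First I would translate so $\bar x = 0$ and note that the map $u\mapsto \mathrm{vol}(P\cap H^+(u,0))$ depends only on the combinatorial type of how the hyperplane $H(u,0)$ through the origin slices $P$. Partition $\mathcal{S}^{n-1}$ into the finitely many (polynomially many, for fixed $n$) cells determined by which subsets of edges/facets of $P$ the hyperplane $\{u\cdot y = 0\}$ crosses; within each cell the half-space volume is a fixed rational function (a ratio of polynomials, from the standard simplicial-decomposition / determinant formula for the volume of a polytope slice) of $u$ of degree bounded in terms of $n$ and the number of facets of $P$. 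Minimizing this piecewise algebraic function over the sphere is a constant-dimensional (dimension $n-1$) problem over semialgebraic sets of polynomially bounded degree, which can be solved to accuracy $\delta$ in time polynomial in $\log(1/\delta)$ and the bit-size of $P$ by standard real-algebraic / quantifier-elimination tools (or by a grid refinement argument combined with Lipschitz bounds on the volume function, which are polynomial in $B$ and the description of $P$). This yields $\bar f$ and a near-minimizing $u^\star$; the direction $u^\star$ then serves as the separating vector $\bar u$, because for any $x'$ with $f_\mu(x')\ge f_\mu(\bar x)$ one has $\mu(H^+(u^\star,x'))\ge f_\mu(\bar x) \ge \mu(H^+(u^\star,\bar x))$ (up to the $\delta$-error), which forces $u^\star\cdot(x' - \bar x)\le 0$ since $\mu = $ uniform measure on a convex body is monotone under parallel translation of a half-space. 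One must carry the $\delta$-slack through this monotonicity argument; absorbing it into the error budget of Theorem~\ref{thm:timm-conv-min} is routine since we may take $\delta$ as small as any inverse polynomial.

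With this oracle in hand, Theorem~\ref{thm:timm-conv-min} (with $d=0$) outputs a point $x^\star$ with $\max_{x} f_\mu(x) - f_\mu(x^\star)\le\epsilon$, i.e. an $\epsilon$-centerpoint, in time polynomial in $\log B$, $\log(1/\epsilon)$ — hence polynomial in the size of an irredundant description of $P$ and $\log(1/\epsilon)$ — which is exactly the claim. Two technical points deserve care and are where I expect the real work to lie. First, one must verify that the superlevel-set separation guarantee (iii) of Theorem~\ref{thm:timm-conv-min} is genuinely satisfied by $u^\star$; the subtlety is that $u^\star$ is only an \emph{approximate} minimizer of the inner problem, so the half-space it defines only \emph{approximately} contains the superlevel set, and one has to check that the theorem's built-in $\delta$-tolerance on the separation vector covers this — this is the main obstacle, and it forces a careful bookkeeping of how the inner-optimization accuracy propagates to the outer separation accuracy. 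Second, the degree and bit-size bounds on the volume function $u\mapsto\mathrm{vol}(P\cap H^+(u,0))$ and on the cell decomposition of $\mathcal{S}^{n-1}$ must be made explicit enough to certify polynomial running time for fixed $n$; this is standard but tedious, relying on the fact that slicing a polytope with $m$ facets by a hyperplane produces a polytope with $O(m)$ facets whose volume is computed by a determinantal formula, and that the arrangement of the corresponding "transition" hypersurfaces on the sphere has polynomially many cells when $n$ is constant.
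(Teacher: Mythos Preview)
Your proposal is correct and follows essentially the same approach as the paper: reduce to Theorem~\ref{thm:timm-conv-min} via the quasi-concavity of $f_\mu$, and implement the required oracle by partitioning $\mathcal{S}^{n-1}$ into cells of constant combinatorial type (the paper indexes these by vertex bipartitions rather than crossed edges, but this is equivalent), observing that the half-space volume is a rational function of $u$ on each cell, and invoking quantifier elimination to optimize it (this is the paper's Lemma~\ref{lem:compute-uX}). Your explicit justification that the minimizing direction $u^\star$ serves as a separation vector via monotonicity of the uniform measure under parallel translation is a point the paper leaves implicit; note also a small notational slip---to get $S=\R^n$ from Theorem~\ref{thm:timm-conv-min} you want the integer dimension to be $0$ and the continuous dimension to be $n$, not $d=0$.
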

%
%

\begin{proof} Since  $f_\mu$ defined in~\eqref{eq:f} is quasi-concave by Lemma~\ref{quasi-concave}, an $x^*$ satisfying $f_\mu(x^*) \geq \F(S,\mu) - \epsilon$ {  can be found using Theorem~\ref{thm:timm-conv-min},} if one has an approximate evaluation oracle for $f_\mu$, and an approximate separation oracle for the level sets. 

Implementing these oracles boils down to the following: Given $\bar x\in \R^n$ and $\delta > 0$, find $\bar u\in \mathcal{S}^{n-1}$ such that \begin{equation}\label{eq:min-u}\mu(H^+(\bar u, \bar x)) \leq\min_{u \in \mathcal{S}^{n-1}}\mu(H^+(u,\bar x)) + \delta \;\;\text{ and }\;\; \| u - \bar u\|_\infty \leq \delta,\end{equation}
 for some $u \in \argmin_{u \in \mathcal{S}^{n-1}}\mu(H^+(u,\bar x))$.



Given $\bar x$, let $\mathcal{P}$ be the set of all partitions of the vertices of $P$ into two sets that can be achieved by a hyperplane through $\bar x$. (Note that, since $n$ is assumed to be fixed, the number of vertices of $P$ is polynomially bounded and they can be computed in time bounded by a polynomial in terms of the input-size of $P$.) This induces a covering of the sphere $\mathcal{S}^{n-1}$: For each $X \in \mathcal{P}$ define $U_X$ to be the set of $u \in \mathcal{S}^{n-1}$ such that the hyperplane $u\cdot x = u\cdot \bar x$ induces the partition $X$ on the vertices of $P$. The number of such partitions is closely related to the VC-dimension of hyperplanes, and in particular, is easily seen to be $O(M^n)$ where $M$ is the number of vertices of $P$. Indeed, let $X\in\mathcal{P}$. Then there exists an $u\in\cl(U_X)$ (where $\cl(\cdot)$ denotes the closure), such that the hyperplane defined by $u$ contains $n-1$ vertices $\{v_1, \ldots, v_{n-1}\}$ of $P$ such that $\{\bar x, v_1, \ldots, v_{n-1}\}$ are affinely independent. Thus, we can construct an $u_X \in U_X$ by perturbing this hyperplane to obtain the partition $X$. Moreover, one can enumerate these partitions in the same amount of time, by picking $n-1$ vertices $\{v_1, \ldots, v_{n-1}\}$ of $P$ such that $\{\bar x, v_1, \ldots, v_{n-1}\}$ are affinely independent.

To solve problem~\eqref{eq:min-u}, we will proceed along these steps.

\begin{enumerate}
\item For each $X \in \mathcal{P}$, find $\bar u_X\in \mathcal{S}^{n-1}$ be such that $$\mu(H^+(\bar u_X, \bar x)) \leq\min_{u \in U_X}\mu(H^+(u,\bar x)) + \delta \;\;\text{ and }\;\; \| u - \bar u_X\|_\infty \leq \delta, $$
 for some $u \in \argmin_{u \in U_X}\mu(H^+(u,\bar x))$.
\item Pick $X^*$ such that $ \mu(H^+(\bar u_{X^*},\bar x)) \leq  \mu(H^+(\bar u_X,\bar x))$ for all $X \in \mathcal{P}$ and report $\bar u_{X^*}$ as the solution to~\eqref{eq:min-u}.
\end{enumerate}

To complete the proof, we need to implement Step 1. above in polynomial time. This is done in Lemma~\ref{lem:compute-uX}. \end{proof}
\begin{lemma}\label{lem:compute-uX}
For a fixed $X \in \mathcal{P}$, one can compute $\bar u_X\in \mathcal{S}^{n-1}$ such that $$\mu(H^+(\bar u_X, \bar x)) \leq\min_{u \in U_X}\mu(H^+(u,\bar x)) + \delta\;\;\text{ and }\;\; \| u - \bar u_X\|_\infty \leq \delta, $$
 for some $u \in \argmin_{u \in U_X}\mu(H^+(u,\bar x))$,
 using an algorithm whose running time is bounded by a polynomial in $\log(\frac1\delta)$ and the size of an irredundant description of $P$.
\end{lemma}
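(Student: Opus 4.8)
The plan is to fix the combinatorial type $X \in \mathcal{P}$ and show that, once we restrict attention to the (relatively open) cell $U_X \subseteq \mathcal{S}^{n-1}$, the function $u \mapsto \mu(H^+(u,\bar x))$ becomes a smooth (in fact, rational in $u$ up to a volume computation) function of $u$, and that minimizing it over $U_X$ is a low-dimensional convex-type optimization problem amenable to Theorem~\ref{thm:timm-conv-min} (with $n$ fixed, $d$ the relevant continuous dimension, and no integer variables). First I would describe the cell $U_X$ explicitly: for the partition $X=(X^+,X^-)$ of the vertex set $V(P)$, $U_X$ is exactly the set of $u\in\mathcal S^{n-1}$ with $u\cdot(v-\bar x)\ge 0$ for $v\in X^+$ and $u\cdot(v-\bar x)\le 0$ for $v\in X^-$; this is the intersection of the sphere with a polyhedral cone $\mathcal{K}_X$ whose facet normals are the rational vectors $v-\bar x$. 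So $U_X$ is described by polynomially many linear inequalities of size polynomial in the input.

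Next I would express $\mu(H^+(u,\bar x))$ on $U_X$. On the cone $\mathcal{K}_X$ the hyperplane $H(u,\bar x)$ cuts $P$ into two pieces whose vertex sets (as a function of which edges of $P$ are crossed) are combinatorially constant — this is precisely what it means for $X$ to be fixed — and $\mu(H^+(u,\bar x)) = \vol(P\cap H^+(u,\bar x))/\vol(P)$. The numerator is the volume of a polytope whose vertices are either vertices of $P$ or points of the form $v + t(w-v)$ on edges $[v,w]$ of $P$ crossed by $H(u,\bar x)$, where the crossing parameter $t = t(u)$ is a ratio of two linear functions of $u$. Hence $\vol(P\cap H^+(u,\bar x))$ is a (piecewise-polynomial, but on $U_X$ a single) ratio/polynomial expression in $u$ that we can evaluate to any accuracy $\delta'$ in time polynomial in $\log(1/\delta')$ and the input size, using standard polytope-volume subroutines for fixed dimension (e.g. triangulation of the fixed-dimensional cross-section). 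This gives the required approximate evaluation oracle for the objective on $U_X$. For the separation oracle on the level sets, I would use quasi-concavity of $u\mapsto -\mu(H^+(u,\bar x))$ restricted to the cell — actually it is cleaner to note that on $U_X$ the objective is real-analytic and we only need an approximate subgradient, which we get by finite differencing the volume formula, again to accuracy controlled polynomially in $\log(1/\delta)$; alternatively, since $U_X$ is convex and the objective has a convexity structure after the standard homogenization $u \in \mathcal{S}^{n-1} \leftrightarrow$ the affine slice, we can appeal directly to the oracle model of Theorem~\ref{thm:timm-conv-min} with $S = \R^{n-1}$ (no integer variables), boundedness constant $B = O(1)$ since $u$ lives on the unit sphere, and the prescribed accuracy $\delta = (C\epsilon'/B)^{n-1}$.

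The final step is bookkeeping: invoking Theorem~\ref{thm:timm-conv-min} produces a point $u^\sharp$ in (a neighborhood of) $U_X$ with $\mu(H^+(u^\sharp,\bar x)) \le \min_{u\in U_X}\mu(H^+(u,\bar x)) + \delta$; renormalizing $u^\sharp$ to the sphere and controlling the perturbation shows it is within $\delta$ in $\ell_\infty$ of an exact minimizer, after possibly shrinking the internal accuracy parameters by a fixed polynomial factor. Since $n$ is fixed, every volume evaluation, every linear-algebra step enumerating the facets of $\mathcal{K}_X$, and the call to Theorem~\ref{thm:timm-conv-min} runs in time polynomial in $\log(1/\delta)$ and the size of the irredundant description of $P$, giving the claimed bound. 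The main obstacle I expect is the bookkeeping around the boundary of $U_X$: the minimizer over the closed cell $\cl(U_X)$ may lie on a lower-dimensional face where the combinatorial type degenerates, so one must either argue that such boundary minimizers are automatically handled by the adjacent cells (so that taking the best $\bar u_X$ over all $X\in\mathcal P$ in Step~2 of the main proof recovers the true optimum), or carefully run the optimization on the closure and verify that the volume formula extends continuously — the latter is true but requires checking that the edge-crossing parametrization degrades gracefully as crossing points merge into vertices of $P$.
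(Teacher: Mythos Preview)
Your setup is right and matches the paper: you correctly describe $U_X$ as the intersection of $\mathcal{S}^{n-1}$ with a rational polyhedral cone, and you correctly argue that on $U_X$ the objective $u\mapsto \mu(H^+(u,\bar x))$ is a ratio of polynomials in $u$ (via the fixed simplicial decomposition of $P\cap H^+(u,\bar x)$ and the edge-crossing parametrization $t(u)=\frac{u\cdot(\bar x-v_2)}{u\cdot(v_1-v_2)}$). The paper does exactly this.

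The gap is in the optimization step. You propose to invoke Theorem~\ref{thm:timm-conv-min}, but that theorem requires the objective to be \emph{quasi-concave}, and you never establish that $u\mapsto -\mu(H^+(u,\bar x))$ is quasi-concave on $U_X$. Your own phrasing reveals the difficulty: you first assert quasi-concavity, then retreat to ``we only need an approximate subgradient,'' then gesture at ``a convexity structure after the standard homogenization.'' None of these are justified. A rational function on a spherical cell has no reason to be quasi-convex, and a subgradient/finite-difference oracle is useless for Theorem~\ref{thm:timm-conv-min} without the underlying quasi-concavity --- the separation hyperplane it returns must actually separate the superlevel set, which fails for non-quasi-concave functions. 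So as written, the appeal to Theorem~\ref{thm:timm-conv-min} is circular.

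The paper avoids this entirely: once the problem is recognized as minimizing $p(u)/q(u)$ subject to $Au\le b$ and $\|u\|_2^2=1$, it rewrites this as the polynomial system $p(u)=z\,q(u)$, $Au\le b$, $\|u\|_2^2=1$ and does binary search on $z$, testing feasibility at each step via quantifier elimination for polynomial systems in a fixed number of variables (Basu--Pollack--Roy / Grigor'ev). This requires no convexity whatsoever and runs in time polynomial in $\log(1/\delta)$ and the input size since $n$ is fixed. That is the missing ingredient in your argument.
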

This lemma can be proved using methods from real algebraic geometry for quantifier elimination in systems of polynomials inequalities.

\begin{proof} 
For a fixed partition $X\in \mathcal{P}$ the feasible region $U_X$ is described by a system of linear inequalities $A u \le b$ and a single quadratic equality  constraint $u_1^2 + u_2^2 + \ldots + u_n^2 = 1$. We claim the objective function can be written as the ratio of two polynomials in $u_1, \ldots, u_n$. Subject to these constraints, we need to minimize $\mu(H^+(u,\bar x))$. Since $X$ is the partition of the vertices of $P$ induced by the hyperplane $u\cdot x = u\cdot \bar x$ (since $u \in U_X$), the set of edges intersected by this hyperplane is fixed. Moreover, the coordinates of the point of intersection of any such edge and this hyperplane can be expressed by a ratio of linear functions of $u_1, \ldots, u_n$. Indeed, suppose the edge intersected is the convex hull of the vertices $v_1, v_2 \in \R^n$. Then there exists $\lambda \in [0,1]$ such that $u\cdot(\lambda v_1 + (1-\lambda)v_2) = u\cdot \bar x$. Thus, $\lambda = \frac{u\cdot(\bar x - v_2)}{u\cdot(v_2 - v_1)}$, and the point of intersection is $\lambda v_1 + (1-\lambda)v_2$ which is a ratio of linear functions of $u$. Also, $P\cap H^+(u,\bar x)$ can be decomposed into a simplicial complex whose combinatorial structure only depends on $X$ and not on the actual values of $u \in U_X$. The volume of $P\cap H^+(u,\bar x))$ can be written as the sum of the volumes of these simplices. Since the volume of a simplex can be written as a polynomial in the coordinates of its vertices, we obtain that $\mu(H^+(u,\bar x))$ is sum of ratios of polynomials in $u_1, \ldots, u_n$ with degree bounded by a function of $n$ only, which can be written as a single ratio of polynomials in $u_1, \ldots, u_n$ where the degrees of the polynomials are bounded by a function of $n$ only. Thus, finding $u_X \in \argmin_{u \in U_X}\mu(H^+(u,\bar x))$ is equivalent to solving a mathematical optimization problem of the following type:

$$
\min_{u_1, \ldots, u_n} \frac{p(u)}{q(u)} \qquad \textrm{s.t. }\; A u \leq b, \;\; u_1^2 + u_2^2 + \ldots + u_n^2 = 1.
$$

The above is equivalent to the following polynomial optimization problem:

$$
\min_{z, u_1, \ldots u_n} z \qquad \textrm{s.t. }\;p(u) = z\cdot q(u), \;\; Au \leq b, \;\; u_1^2 + u_2^2 + \ldots + u_n^2 = 1.
$$

This optimization problem can be solved to within $\delta$ accuracy of the objective and the solution by performing a binary search on the objective value and  using quantifier elimination methods for testing feasibility of polynomial systems of inequalities and equalities. For polynomial systems with a fixed number of variables this can be done in polynomial time in the size of the coefficients~\cite{basu1996combinatorial}. See also ``Remark'' on page 2 of~\cite{grigor1988solving}.
\end{proof}

\subsubsection{Counting measure on the integer points in two dimensional polytopes}\label{chap:centerPoints::subsec:appAlgFor2Dim}
If we use the counting measure on the integer points in a polytope, the algorithm requires no accuracy parameter $\epsilon$.


\begin{theorem}\label{chap:centerPoints::thm:2dAlg}
Let $P=\{ x\in\R^2 : Ax\le b \}$ be a rational polytope, where $A\in\Z^{m \times 2}$ and $b \in \Z^m$, such that $P\cap\Z^2\neq\emptyset$.
Let $\mu$ denote the uniform measure on $P\cap\Z^2$.
Then in polynomial time in the input-size of $A$ and $b$, one can compute a point 
$$z\in\mathcal{C}(\Z^2,\mu).$$
\end{theorem}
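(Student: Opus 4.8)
The plan is to compute the Tukey median of the integer points of a rational polygon $P$ exactly (not approximately), exploiting the fact that in two dimensions the Tukey depth function takes only finitely many relevant values and the centerpoint region is a polygon with rationally-describable vertices. The overall structure I would use mirrors the proof of Theorem~\ref{thm:exact-uniform-algo}: I want to set up $f_\mu$ as a quasi-concave function (Lemma~\ref{quasi-concave}) and feed it into the mixed-integer convex optimization engine of Theorem~\ref{thm:timm-conv-min}, but now with $S = \Z^2$ and with the extra bonus that the optimum is attained at an integer point with no accuracy loss, so $\epsilon$ can be dispensed with entirely.

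\textbf{Step 1: counting integer points in halfplanes.} First I would show that for a fixed query point $\bar x \in \Z^2$, one can evaluate $f_\mu(\bar x) = \inf_{u\in\mathcal{S}^1}\mu(H^+(u,\bar x))$ exactly in polynomial time. The number of integer points in $P \cap H^+(u,\bar x)$ can be computed in polynomial time for fixed dimension (Barvinok's algorithm, or in $2$D elementary lattice-point-counting in polygons). As $u$ rotates around $\mathcal{S}^1$, the partition of $P\cap\Z^2$ induced by the line through $\bar x$ changes only at finitely many ``critical'' directions — those where the line through $\bar x$ passes through another integer point of $P$. There are at most $|P\cap\Z^2|$ such directions, and between consecutive critical directions the count $\mu(H^+(u,\bar x))$ is constant on the half-open arcs (with the closed-halfspace convention; one has to be slightly careful about which side collinear points fall on, but this is a finite bookkeeping matter). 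Hence $f_\mu(\bar x)$ is the minimum over polynomially many explicitly computable rational values, giving an \emph{exact} evaluation oracle. Moreover the same analysis yields an exact separation direction $u$ for the level set $\{f_\mu \ge f_\mu(\bar x)\}$ at $\bar x$ (by quasi-concavity, Lemma~\ref{quasi-concave}, such a separating hyperplane exists; one reads it off as the normal achieving the infimum).

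\textbf{Step 2: reduce the search region and invoke the integer-optimization routine.} A point of maximum Tukey depth of a planar point set lies in the convex hull of the points, so I restrict attention to $\bar x \in P \cap \Z^2$, and I need an a priori bound $B$ on $\|\bar x\|$ — this follows from the rational description of $P$ since $P$ is bounded and its vertices have polynomial encoding length. Now apply Theorem~\ref{thm:timm-conv-min} with $S=\Z^2$, $d=0$, $n=2$, the function $f_\mu$ (quasi-concave), $\epsilon$ set to anything strictly less than the gap between the two smallest distinct values of $f_\mu$ (which are rationals with polynomially bounded denominators, so such an $\epsilon$ of polynomial bit-size exists) — or even more directly, since all quantities are exact rationals, run the integer convex optimization in exact arithmetic. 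This produces $z \in \argmax_{x\in\Z^2} f_\mu(x) = \mathcal{C}(\Z^2,\mu)$ in time polynomial in $\log B$, hence polynomial in the input size of $A, b$, since $n=2$ is fixed.

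\textbf{The main obstacle} I anticipate is handling the boundary/collinearity cases cleanly: the closed-halfspace convention in the definition of $H^+(u,x)$ means that when the line through $\bar x$ with normal $u$ passes through other integer points of $P$, those points must be assigned correctly, and the infimum over $u\in\mathcal{S}^1$ may be approached from one side of a critical direction but not attained at it — so I need to argue that the infimum over the sphere is in fact a minimum over the finite set of (appropriately one-sided limits at) critical directions, and that Remark~\ref{rem:minimum} guarantees it is genuinely achieved. A second, more minor, technical point is confirming that $f_\mu$ (with this finite-atomic $\mu$) still satisfies the oracle hypotheses (i)–(iii) of Theorem~\ref{thm:timm-conv-min} \emph{exactly} rather than approximately, so that the output $z$ is an exact centerpoint and not merely an $\epsilon$-centerpoint — this is where the discreteness of $\mu$ is essential and is what lets us drop the accuracy parameter.
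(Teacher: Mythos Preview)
Your overall framework---use quasi-concavity (Lemma~\ref{quasi-concave}) and the integer optimization routine of Theorem~\ref{thm:timm-conv-min}, and reduce everything to an exact evaluation/separation oracle for $f_\mu$ at integer query points---is exactly what the paper does. The gap is in Step~1, where you propose to evaluate $f_\mu(\bar x)$ by enumerating the ``critical'' directions at which the rotating line through $\bar x$ passes through another point of $P\cap\Z^2$. You bound the number of such directions by $|P\cap\Z^2|$ and call this ``polynomially many''. But $|P\cap\Z^2|$ is \emph{exponential} in the input size of $A,b$ in general (take $P=[0,2^k]^2$, input size $O(k)$, yet $|P\cap\Z^2|\approx 4^k$). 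So your oracle does not run in polynomial time, and the whole algorithm collapses.

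The paper sidesteps this by a different and sharper combinatorial observation. Writing $g(\alpha)=\mu(H^+((\sin\alpha,\cos\alpha),\bar x))$, the function $g$ is indeed piecewise constant with possibly exponentially many pieces, but one does \emph{not} need to enumerate them: it suffices to observe that $g$ is \emph{monotone} on each of a small number of sub-arcs of $\mathcal{S}^1$. Specifically, $g$ is increasing (resp.\ decreasing) according to whether the segment $P\cap\{\bar x+\lambda(\sin(\alpha+\pi/2),\cos(\alpha+\pi/2)):\lambda\ge0\}$ is longer (resp.\ shorter) than the opposite segment; this comparison flips only when the two segments have equal length, and each such event is determined by a pair of facets of $P$. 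Hence there are at most $O(m^2)$ monotonicity-change angles, all computable from the facet description, and the minimum of $g$ is attained at one of these $O(m^2)$ endpoints. Evaluating $g$ at each endpoint is a single lattice-point count in a rational polygon (polynomial by Barvinok/Pick), so the oracle is genuinely polynomial. Your collinearity worries are then a side issue; the real missing idea is replacing ``where does the count change'' (exponentially many events) with ``where does the monotonicity change'' (polynomially many, governed by facets rather than lattice points).
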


\begin{proof}
By utilizing the fact that $f_\mu$ is concave (Lemma~\ref{quasi-concave}) and Theorem~\ref{thm:timm-conv-min}, it suffices to show that for a given $\bar x\in\Z^2$ one can compute in polynomial time
$$\bar u \in \argmin_{u \in \mathcal{S}^{1}}\mu(H^+(u,\bar x)).$$

Let $g: [0,2\pi) \to [0,1]$ be defined as $g(\alpha):=\mu(H^+((\sin(\alpha),\cos(\alpha))^\T,\bar x))$.
The key observations are that $g$ is piecewise constant and that the domain $[0,2\pi)$ can be partitioned into a polynomial number of intervals $S_i$ such that $g$ is monotone on each of them.
This implies, that in order to compute $\bar u$, one only needs to evaluate $g$ at the beginning and the end of each interval $S_i$.

Let $l^+(\alpha)$ denote the line segment $P\cap\{\bar x + \lambda (\sin(\alpha+\pi/2),\cos(\alpha+\pi/2))^\T : \lambda\ge0\}$ and let $l^-(\alpha)$ denote $P\cap\{\bar x + \lambda (\sin(\alpha-\pi/2),\cos(\alpha-\pi/2))^\T : \lambda\ge0\}$.
Observe that $g(\alpha)$ is monotone increasing if the line segment $l^+(\alpha)$ is longer than the line segment $l^-(\alpha)$ and $g(\alpha)$ is monotone decreasing if the line segment $l^+(\alpha)$ is shorter than the line segment $l^-(\alpha)$.
Hence, the monotonicity can only change when the two lengths are equal.
All those critical $\alpha$ can be computed by comparing each pair of facets.
\end{proof}

\subsection{Approximation algorithms}\label{chap:centerPoints::subsec:appAlgForFixDim}

\subsubsection{A Lenstra-type algorithm to compute approximate centerpoints} 


As we already pointed out in Section~\ref{sec:center-opti}, centerpoints can be used to design ``optimal'' oracle-based algorithms for convex mixed-integer optimization problems.
In turn, it is possible to employ linear mixed-integer optimization techniques to compute approximate centerpoints. 
However, this comes with a significant loss in the approximation guarantee.
Recall the definition of $\mu_{mixed,P}$ from~\eqref{eq:mixed}.

\begin{theorem}\label{thm:approximate-mixed-algo}
Let $n,d \in \N$ be fixed and let $P$ be a rational polytope. 
Then in polynomial time in the input-size of $P$, one can find a point 
$$z\in\left\{ x \in \Z^n\times\R^d : f_{\mu_{mixed,P}}(x) \geq \frac{1}{2^{n^2}(d+1)^{(n+1)}} \right\}.$$
\end{theorem}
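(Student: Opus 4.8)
The plan is to reduce the computation of an approximate centerpoint to solving a small number of convex mixed-integer optimization problems of the type handled by Theorem~\ref{thm:timm-conv-min}. The obstacle is that $f_{\mu_{mixed,P}}$ is hard to optimize over exactly in polynomial time (the infimum defining it is over a sphere, and evaluating it is expensive), so instead I would work with a tractable surrogate that still certifies a good enough Tukey depth. The natural surrogate comes from the fiber decomposition: for $z \in \Z^n$, let $P_z := P \cap (\{z\}\times\R^d)$ denote the $z$-th fiber and let $w_z := \vol_d(P_z)$ be its weight. First I would run the algorithm of Theorem~\ref{thm:timm-conv-min} on the (quasi-concave, by Lemma~\ref{quasi-concave}) function $z \mapsto w_z$ over $\Z^n$ to find, up to the required accuracy, a fiber $P_{z^\star}$ of maximum weight; since $P$ has at most polynomially many (in fixed dimension) nonempty fibers, $w_{z^\star} \geq \frac{1}{N}\sum_z w_z$ where $N$ is the number of nonempty fibers, and a cruder but cleaner bound $w_{z^\star} \geq 2^{-n^2}(d+1)^{-n}\sum_z w_z$ can be extracted from a lattice-width/covering argument analogous to the remark following Theorem~\ref{thm:boundForLargeLatticeWidth-reform} (covering $P|_{\R^n}$ by flat directions iteratively).

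Having fixed the heaviest fiber $P_{z^\star}$, which is (a translate of) a $d$-dimensional rational polytope, I would apply the exact algorithm for the uniform measure on polytopes, Theorem~\ref{thm:exact-uniform-algo}, in dimension $d$ to compute a point $y^\star \in \R^d$ that is an $\epsilon$-centerpoint of $P_{z^\star}$ under the uniform measure on $P_{z^\star}$. By Grünbaum's theorem (used in Corollary~\ref{cor:continuous-bound}) every closed halfspace through $y^\star$ captures at least $\left(\frac{d}{d+1}\right)^d \geq \frac{1}{d+1}$ of the mass of $P_{z^\star}$. Now I claim $(z^\star, y^\star) \in \Z^n \times \R^d$ is the desired point: for any unit vector $u \in \mathcal{S}^{n+d-1}$, the halfspace $H^+(u,(z^\star,y^\star))$ contains $H^+(u',y^\star) \cap P_{z^\star}$ where $u'$ is (a normalization of) the restriction of $u$ to the last $d$ coordinates — unless $u$ is orthogonal to the fiber, in which case $H^+$ contains the entire fiber. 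Either way,
\[
\mu_{mixed,P}(H^+(u,(z^\star,y^\star))) \;\geq\; \frac{1}{d+1}\cdot\frac{w_{z^\star}}{\sum_z w_z} \;\geq\; \frac{1}{d+1}\cdot\frac{1}{2^{n^2}(d+1)^n} \;=\; \frac{1}{2^{n^2}(d+1)^{n+1}},
\]
which is exactly the bound in the statement.

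The main obstacle I anticipate is making the weight bound $w_{z^\star}/\sum_z w_z \geq 2^{-n^2}(d+1)^{-n}$ precise and clean: a naive bound gives $1/N$ with $N$ only polynomially bounded, which is worse than the stated constant, so one genuinely needs the iterated-flat-direction argument (find a lattice direction in which $P|_{\R^n}$ has small width, slice, recurse on the at most $(d+1)$-ish — more carefully, a lattice-width-times-Helly — many slabs), and one must verify the bookkeeping yields exactly $2^{n^2}(d+1)^n$ fibers' worth of loss after $n$ levels of recursion. A secondary point requiring care is the handling of the error parameter $\epsilon$ and the $\delta$-approximate oracles: one must check that Theorem~\ref{thm:timm-conv-min}'s requirements on the oracle for $z\mapsto w_z$ (approximate value and approximate separation) can be met in polynomial time — the separator for a superlevel set $\{z : w_z \geq w_{\bar z}\}$ need not be cheap to produce — and that the final point, after all approximations, still satisfies the (strict, non-$\epsilon$) inequality in the theorem statement, which works because the target constant is strictly below the true guarantee $\frac{1}{2^{n^2}(d+1)^n}\left(\frac{d}{d+1}\right)^d$, leaving room to absorb the errors. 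Since the theorem as stated asks only for membership in a fixed superlevel set (no $\epsilon$ in the conclusion), choosing all internal accuracies small enough relative to the slack suffices.
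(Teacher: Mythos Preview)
Your main argument has a genuine gap: the weight bound $w_{z^\star}/\sum_z w_z \geq 2^{-n^2}(d+1)^{-n}$ is simply false. Take $P = [0,B]^n \times [0,1]^d$: all $(B+1)^n$ fibers have identical weight, so the heaviest fiber carries only a $(B+1)^{-n}$ fraction of the total mass, which can be made arbitrarily small while $n,d$ stay fixed. No iterated-flat-direction argument can rescue this, because in this example $P|_{\R^n}$ has lattice width $B$ in \emph{every} direction --- there is no flat direction to iterate on. So the strategy ``locate a single good fiber and take its continuous centerpoint'' cannot yield a depth bound depending only on $n$ and $d$.

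This obstruction is exactly what the paper's proof is built around. The paper argues by a dichotomy on $\omega(P|_{\R^n})$. When the width exceeds a threshold $\bar\omega$ depending only on $n,d$, Theorem~\ref{thm:boundForLargeLatticeWidth} (which is constructive) directly produces a mixed-integer point of depth close to $e^{-1}$; this is what handles your box example. When the width is at most $\bar\omega$, one slices along a flat direction into at most $\bar\omega+1$ slabs $P_i$, each with one fewer integer variable, and recurses to obtain approximate centerpoints $z_i$ of the $P_i$. The aggregation step is \emph{not} ``pick the heaviest slice'': instead one forms the auxiliary finite measure $\bar\mu$ placing mass $\mu(P_i)$ at $z_i$, computes by brute force a centerpoint $z \in \mathcal{C}(\Z^n\times\R^d,\bar\mu)$, and invokes the Helly bound (Theorem~\ref{thm:general:thm:helly}) to get $f_{\bar\mu}(z) \geq (2^n(d+1))^{-1}$. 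Combining with the inductive guarantee on each slice gives
\[
\mu(H^+) \;\geq\; \frac{1}{2^{(n-1)^2}(d+1)^{n}}\,\bar\mu(H^+) \;\geq\; \frac{1}{2^{(n-1)^2}(d+1)^{n}}\cdot\frac{1}{2^n(d+1)} \;\geq\; \frac{1}{2^{n^2}(d+1)^{n+1}}.
\]
Your iterated-flat-direction sketch has the right shape for the small-width branch, but it cannot stand alone: the large-width branch genuinely requires the comparison between mixed-integer and Lebesgue measure (Lemma~\ref{lem:mixedVolume} / Theorem~\ref{thm:boundForLargeLatticeWidth}), which your proposal never invokes.

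A minor point: the quasi-concavity of $z \mapsto w_z$ is a consequence of Brunn--Minkowski, not of Lemma~\ref{quasi-concave}, which concerns the Tukey-depth function $f_\mu$ rather than fiber volumes.
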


\begin{proof}
By Theorem~\ref{thm:exact-uniform-algo}, the statement holds for $n=0$. 
Also, since Theorem~\ref{thm:boundForLargeLatticeWidth} is constructive, there exists a $\bar\w$ that only depends on $n$ and $d$, such that the theorem holds true provided that the lattice-width of $P$ is larger than $\bar\w$.

By induction we assume that the result is true for $n-1$.
Further, we may assume that the lattice width is smaller than $\bar\w$.
Without loss of generality, we assume that the flatness direction of $P$ is equal to $n$-th unit vector, i.e., $\min_{x\in P}x_n\ge0$ and $\max_{x\in P}x_n\le\bar\w$. 
We define $P_i:=P\cap\{x\in\R^{n+d}: x_n=i\}$ and the corresponding uniform measure $\mu_i$.
By the induction hypothesis, we can compute $z_i\in \left\{ x \in \Z^n\times\R^d : f_{\mu_{i}}(x) \geq \frac{1}{2^{(n-1)^2}(d+1)^{(n)}} \right\}$
for $i=0,\dots,\bar\w$.

We define the finite auxiliary measure:
$$\bar\mu(x):=
\begin{cases}
\mu(P_i) & \text{ if } x=z_i,\\
0 & \text {otherwise.}
\end{cases}$$
Then, with a brute force approach, we  compute the centerpoint $z$ in $\mathcal{C}(\Z^n\times\R^d,\bar\mu)$. 

It remains to show that $z\in\left\{ x \in \Z^n\times\R^d : f_{\mu_{mixed,P}}(x) \geq \frac{1}{2^{n^2}(d+1)^{(n+1)}} \right\}$.  
Let $H^+$ be any half-space containing $z$.
Note that, for all $i$ we have $\mu(P_i \cap H^+) \ge\frac{1}{2^{(n-1)^2}(d+1)^{n}}\bar\mu(\{z_i\}\cap H^+)$.
Hence,
$$\mu(P\cap H^+)=\sum_{i=0}^{\bar\w}\mu(P_i\cap H^+)\ge\frac{1}{2^{(n-1)^2}(d+1)^{n}}\sum_{i=0}^{\bar\w}\bar \mu(\{z_i\}\cap H^+)\ge\frac{1}{2^{n^2}(d+1)^{n+1}},$$
where the last inequality comes from Theorem~\ref{thm:general:thm:helly}.
\end{proof}

\subsubsection{Computing approximate centerpoints with a Monte-Carlo algorithm} In this section, we compute $\epsilon$-centerpoints, but for any family of measures from which one can sample uniformly. However, now the algorithm's runtime depends polynomially on $\frac1\epsilon$, as opposed to $\log(\frac1\epsilon)$ as for the uniform measure on rational polytopes from Section \ref{sec:exact}.

Suppose we have access to two black-box algorithms:
\begin{enumerate}
\item OPT is an algorithm which works for some family $\mathcal{S}$ of closed subsets of $\R^n$. OPT takes as input a closed set $S\in \mathcal{S}$ and (approximately) computes $\argmax_{x\in S} g(x)$ for any quasi-concave function $g$, given an (approximate) evaluation oracle for $g$ and an (approximate) separation oracle for the sets $\{x  :  g(x) \geq \alpha\}_{\alpha\in \R}$. Let $T_1(S)$ be the number of calls that OPT makes to the evaluation and separation oracles, and $T_2(S)$ be the number of elementary arithmetic operations OPT makes during its execution.
\item SAMPLE is an algorithm which works for some family of probability measures $\Gamma$. SAMPLE takes as input a measure $\mu \in \Gamma$ and produces a sample point $x\in \R^n$ from the measure $\mu$. Let $T(\mu)$ be the running time for SAMPLE.
\end{enumerate}

We now show that with access to the above two algorithms, one can compute an $\epsilon$-centerpoint for $(S, \mu) \in \mathcal{S} \times \Gamma$. 

\begin{theorem}\label{thm:e-approx}
Let $\mathcal{S}$ be a family of closed subsets of $\R^n$ equipped with an algorithm OPT as described above, and let $\Gamma$ be a family of measures on $\R^n$ equipped with an algorithm SAMPLE as described above. 

There exists a Monte Carlo algorithm which takes as input $(S,\mu) \in \mathcal{S}\times \Gamma$, real numbers $\epsilon,\delta>0$ and computes an $\epsilon$-approximate centerpoint for $S,\mu$ with probability at least $1-\delta$. The running time of this algorithm is $T_1(S)\cdot N^n + T_2(S) + T(\mu)\cdot N$, where $N = O(\frac{1}{\epsilon^2}((n+1) + \log\frac{1}{\delta}))$.

\end{theorem}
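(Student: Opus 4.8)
The plan is to estimate the Tukey depth $f_\mu(x)$ at an arbitrary point $x$ using a Monte Carlo procedure, and then feed this noisy depth oracle into the algorithm OPT applied to the quasi-concave function $g = f_\mu$ over the set $S$. First I would use SAMPLE to draw $N$ i.i.d. points $X_1, \ldots, X_N$ from $\mu$, with $N = O(\tfrac{1}{\epsilon^2}((n+1) + \log\tfrac{1}{\delta}))$. For any query point $x$, define the empirical depth $\hat f(x) := \min_{u \in \mathcal{S}^{n-1}} \tfrac{1}{N}\sum_{j=1}^N \mathbb{1}[u \cdot (X_j - x) \ge 0]$, i.e. the fraction of sample points lying in the worst halfspace through $x$. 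This is exactly the classical (discrete) Tukey depth of $x$ with respect to the point set $\{X_1,\ldots,X_N\}$, so it can be computed by the combinatorial methods referenced earlier, and the worst-case halfspace normal gives an (approximate) separating hyperplane for the superlevel sets of $\hat f$ — this is what OPT needs.

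The key technical step is a uniform concentration bound: with probability at least $1-\delta$, $\sup_{x \in \R^n} |\hat f(x) - f_\mu(x)| \le \epsilon/2$ (or some constant multiple of $\epsilon$). This is where the VC-dimension of halfspaces enters. The collection of halfspaces in $\R^n$ has VC dimension $n+1$, so by the Vapnik–Chervonenkis uniform convergence theorem, $\sup_{H}|\mu(H) - \tfrac{1}{N}\sum_j \mathbb{1}[X_j \in H]| \le \epsilon/2$ with probability $\ge 1-\delta$ once $N = O(\tfrac{1}{\epsilon^2}((n+1)+\log\tfrac1\delta))$. Since $f_\mu(x)$ and $\hat f(x)$ are both infima of $\mu$-measure and empirical measure, respectively, over the family of halfspaces through $x$ (a subfamily of all halfspaces), the sup-deviation over halfspaces controls $|\hat f(x) - f_\mu(x)|$ uniformly in $x$. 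Then the noisy oracle fed to OPT satisfies its accuracy requirements, and OPT returns a point $x^*$ with $\hat f(x^*) \ge \max_{x\in S}\hat f(x) - O(\epsilon)$; combining with the uniform bound gives $f_\mu(x^*) \ge \F(S,\mu) - \epsilon$ after rescaling constants.

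For the running time, I would simply tally: drawing the $N$ samples costs $T(\mu)\cdot N$; each of the $T_1(S)$ oracle calls made by OPT requires evaluating $\hat f$ and its worst halfspace on the fixed $N$-point set, which is polynomial in $N$ and $n$ — absorbing the per-call cost into the $N^n$ factor (the discrete Tukey depth computation being the dominant term, exponential in $n$ as flagged in the introduction via the closed hemisphere problem); and OPT's internal arithmetic is $T_2(S)$. This yields the claimed bound $T_1(S)\cdot N^n + T_2(S) + T(\mu)\cdot N$.

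The main obstacle I anticipate is making the reduction to OPT fully rigorous: OPT as specified expects an approximate evaluation oracle with additive error $\delta'$ and an approximate separation vector within $\delta'$ in $\ell_\infty$, with $\delta'$ possibly needing to be polynomially small in $\epsilon/B$ (cf. Theorem~\ref{thm:timm-conv-min}). The empirical depth $\hat f$ is only within $\epsilon/2$ of the true $f_\mu$, which is a fixed constant, not an arbitrarily small $\delta'$. The resolution is that we are not asking OPT to optimize $f_\mu$ exactly — we run OPT on the proxy objective $\hat f$ itself, which OPT can evaluate and separate essentially exactly (up to the tiny numerical tolerance it is designed for, controlled by a separate internal parameter), and only afterward invoke the uniform bound to transfer near-optimality of $\hat f$ back to near-optimality of $f_\mu$. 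Getting this two-layer error bookkeeping clean — distinguishing OPT's internal precision from the statistical error $\epsilon$ — is the delicate part; the concentration inequality itself is standard.
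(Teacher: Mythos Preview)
Your proposal is correct and is essentially identical to the paper's proof: both draw $N = O(\tfrac{1}{\epsilon^2}((n+1)+\log\tfrac1\delta))$ samples, invoke VC-based uniform convergence over halfspaces (the paper cites Talagrand's sharpening) to get $|\hat f - f_\mu|\le\epsilon$ uniformly, run OPT on the empirical depth $\hat f$ (not on $f_\mu$), and implement the evaluation/separation oracle for $\hat f$ by the $O(N^n)$ brute-force enumeration of hyperplanes through $n-1$ sample points. Your discussion of the ``two-layer error bookkeeping'' is exactly the right resolution and matches what the paper does implicitly.
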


To prove this theorem, we will need a deep result from probability theory that has resulted after a long line of research sparked by the seminal ideas of Vapnik and Chervonenkis~\cite{vapnik1971uniform}, and culminated in a result of Talagrand~\cite{talagrand1994sharper}. 
The following theorem is a rewording of Talagrand's result~\cite{talagrand1994sharper}, specialized for function classes with bounded VC-dimension.

\begin{theorem}\label{thm:sample-complexity}
Let $(X,\mu)$ be a probability space. Let $\F$ be a family of functions mapping $X$ to $\{0,1\}$ and let $\nu$ be the VC-dimension of the family $\F$. There exists a universal constant $C$, such that for any $\epsilon,\delta >0$, if $M$ is a sample of size $C\cdot\frac{1}{\epsilon^2}(\nu + \log\frac{1}{\delta})$ drawn independently from $X$ according to $\mu$, then with probability at least $1-\delta$, for every function $f\in \F$, $\left|\frac{|\{x\in M : f(x)=1\}|}{|M|} - \mu(\{x\in X :  f(x) = 1\})\right| \leq \epsilon$.
\end{theorem}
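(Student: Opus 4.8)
The plan is to follow the classical Vapnik--Chervonenkis route, and then to obtain the sharp sample size --- free of the spurious $\log\tfrac1\epsilon$ factor that the naive argument produces --- by invoking the refinement due to Talagrand. Write $p(f):=\mu(\{x\in X : f(x)=1\})$ and, for a finite sample $M$, $\hat p_M(f):=|M|^{-1}\,|\{x\in M : f(x)=1\}|$; the goal is to bound $\Pr\big[\sup_{f\in\F}|\hat p_M(f)-p(f)|>\epsilon\big]$ when $|M|=m$, and to find the smallest $m$ for which this is at most $\delta$. Throughout I suppress the standard measurability caveats on $\F$, which are handled by the usual ``permissibility'' hypotheses.

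First I would symmetrize. Draw an independent ``ghost'' sample $M'$ of the same size $m$; a standard computation shows that for $m\ge 2/\epsilon^2$,
$$\Pr\Big[\sup_{f\in\F}|\hat p_M(f)-p(f)|>\epsilon\Big]\;\le\;2\,\Pr\Big[\sup_{f\in\F}|\hat p_M(f)-\hat p_{M'}(f)|>\tfrac\epsilon2\Big],$$
which replaces the population quantity $p(f)$ by something depending only on the $2m$ points $T:=M\cup M'$. Conditioning on $T$, the event on the right depends on $f$ only through the pattern $(f(x))_{x\in T}\in\{0,1\}^{2m}$; by the Sauer--Shelah lemma the number of distinct patterns is at most $\sum_{i\le\nu}\binom{2m}{i}\le(2em/\nu)^\nu$. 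Moreover, conditioned on $T$ the partition of $T$ into $M$ and $M'$ is a uniform random bipartition, so $\hat p_M(f)-\hat p_{M'}(f)$ is a Rademacher-type average and Hoeffding's inequality gives, for each fixed pattern, a conditional tail bound $2e^{-m\epsilon^2/8}$. A union bound over the patterns then yields
$$\Pr\Big[\sup_{f\in\F}|\hat p_M(f)-p(f)|>\epsilon\Big]\;\le\;4\,(2em/\nu)^\nu\,e^{-m\epsilon^2/8},$$
and requiring the right-hand side to be at most $\delta$ already proves the theorem with the slightly weaker bound $m=O\big(\epsilon^{-2}(\nu\log\tfrac1\epsilon+\log\tfrac1\delta)\big)$.

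The step I expect to be the main obstacle is removing the extra $\log\tfrac1\epsilon$, which is exactly the improvement that makes the statement a ``deep'' result rather than a textbook one. It arises solely from the crude union bound over all $(2em/\nu)^\nu$ patterns, which wastes the fact that most patterns are mutually close in Hamming distance. The remedy is a chaining argument at the level of the empirical $\ell_2$ metric on $T$: Haussler's packing bound says the $\rho$-packing number of $\F|_T$ is $O(\rho^{-C\nu})$ uniformly in $m$, and Dudley's entropy integral then bounds $\E\,\sup_{f\in\F}|\hat p_M(f)-\hat p_{M'}(f)|$ by a quantity of order $\sqrt{\nu/m}$ with no logarithmic factor; a concentration inequality for the supremum of the empirical process (bounded differences, or Talagrand's concentration inequality for empirical processes) upgrades this expectation bound to the required high-probability statement and yields $m=O\big(\epsilon^{-2}(\nu+\log\tfrac1\delta)\big)$. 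Since this last step is precisely the content of~\cite{talagrand1994sharper}, in the write-up I would cite it as a black box rather than reproduce the chaining and concentration estimates; the earlier steps are retained only so that the statement is self-contained in its classical form.
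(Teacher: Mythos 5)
Your proposal is correct and ultimately takes the same route as the paper: the paper does not prove this theorem at all but states it as a rewording of Talagrand's result~\cite{talagrand1994sharper}, and your argument likewise defers to that result (via chaining with Haussler's packing bound and concentration for empirical processes) for the decisive removal of the $\log\frac{1}{\epsilon}$ factor. The classical symmetrization/Sauer--Shelah sketch you include is a sound account of the weaker bound, but it is not needed beyond motivation, since the sharp statement is exactly what the cited result provides.
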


\begin{proof}[Proof of Theorem~\ref{thm:e-approx}]
We call SAMPLE to create a sample $M$ of size $C\cdot\frac{1}{\epsilon^2}((n+1) + \log\frac{1}{\delta})$ by drawing independently and uniformly at random from $S$ (note that $M$ may contain multiple copies of the same point from $S$). 
Since the VC-dimension of the family of half spaces in $\R^n$ is $n+1$, we know from Theorem~\ref{thm:sample-complexity} that with probability at least $1-\delta$, for every half space $H^+$, $\left|\frac{|H^+\cap M|}{|M|} - \mu(H^+)\right| \leq \epsilon$. Let $\mu'$ be the counting measure on $M$. Then we obtain that $|f_{\mu'}(x) - f_\mu(x)| \leq \epsilon$ for all $x \in \R^n$. Therefore, any $x^* \in \arg\max_{x\in S}f_{\mu'}(x)$ is an $\epsilon$-centerpoint for $S$. This can be achieved by calling OPT to compute $x^* \in \arg\max_{x\in S}f_{\mu'}(x)$. For this, we need to exhibit evaluation and separation oracles for $f_{\mu'}$. But notice that, by Lemma~\ref{quasi-concave}, this can be accomplished by simply implementing the following procedure: given $x\in \R^d$, find the best hyperplane $H$ through $x$ such that $\frac{|H^+\cap M|}{|M|}$ is minimized. This can be done in time $O(|M|^n)$ by simply enumerating all hyperplanes that contain $n-1$ affinely independent points from $M$.
\end{proof}

The following result is a consequence.

%

\begin{theorem}\label{thm:mixed-integer-e-approx}
Let $n$ and $d$ be fixed integers. There exists a Monte Carlo algorithm which takes as input an integer $m \geq 1$, a matrix $A\in \R^{m\times (n+d)}$, a vector $b\in \R^m$, real numbers $\epsilon, \delta > 0$ and returns an $\epsilon$-approximate centerpoint when 
$S=\Z^n\times\R^d$ and $\mu$ is the uniform measure on $\{x \in \Z^n\times \R^d :  Ax \leq b\}$,
with probability $1-\delta$. The running time of the algorithm is a polynomial in $m, {  \log(\max\{|A_{i,j}|,\,|b_k|\})},\frac{1}{\epsilon},\log\frac{1}{\delta}$. 
\end{theorem}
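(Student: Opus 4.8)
The plan is to reduce Theorem~\ref{thm:mixed-integer-e-approx} to the generic Monte Carlo machinery of Theorem~\ref{thm:e-approx}, which requires exhibiting two subroutines: a sampling oracle SAMPLE for the uniform measure on $\{x\in\Z^n\times\R^d : Ax\le b\}$, and an optimization oracle OPT for maximizing a quasi-concave function over $S=\Z^n\times\R^d$. For OPT, I would invoke Theorem~\ref{thm:timm-conv-min}: since $n$ and $d$ are fixed, that algorithm maximizes a quasi-concave function over $\Z^n\times\R^d$ in time polynomial in $\log B$, $\log(1/\epsilon)$, given only an approximate evaluation oracle and an approximate separation oracle for the level sets. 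A bound $B$ on the norm of an optimal centerpoint follows from the fact that $\{x : f_\mu(x)\ge\alpha\}$ is compact (Remark~\ref{rem:maximum}) and can be made explicit from the polytope data; and the level sets of $f_\mu$ are convex by Lemma~\ref{quasi-concave}, with a separating hyperplane computable from the best bisecting hyperplane at a queried point, exactly as in the proof of Theorem~\ref{thm:e-approx}. Thus $T_1(S), T_2(S)$ are polynomial in the input size when $n,d$ are fixed.

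For SAMPLE, I would appeal to the classical result on near-uniform sampling from convex bodies, or more precisely its mixed-integer analogue: one can sample approximately uniformly from the mixed-integer points of a rational polytope in time polynomial in the input size when $n$ is fixed (for instance by enumerating the polynomially many integer fibers $\{z\}\times\R^d$, $z\in\Z^n$, with $P\cap(\{z\}\times\R^d)\ne\emptyset$, computing the $d$-volume of each such slice, picking a slice with probability proportional to its volume, and then sampling uniformly from that $d$-dimensional polytope using a standard randomized algorithm such as hit-and-run). The total sampling time $T(\mu)$ is then polynomial in $m$, $\log(\max\{|A_{i,j}|,|b_k|\})$, and in the inverse accuracy, which folds into $\log(1/\delta)$ and $\log(1/\epsilon)$.

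Plugging these into Theorem~\ref{thm:e-approx} gives an $\epsilon$-approximate centerpoint with probability $1-\delta$ and running time $T_1(S)\cdot N^n + T_2(S) + T(\mu)\cdot N$ with $N=O(\epsilon^{-2}((n+1)+\log(1/\delta)))$; since $n$ is fixed, $N^n$ is still polynomial in $1/\epsilon$ and $\log(1/\delta)$, and all remaining factors are polynomial in $m$, $\log(\max\{|A_{i,j}|,|b_k|\})$, $1/\epsilon$, $\log(1/\delta)$, as claimed.

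The main obstacle I anticipate is handling the approximations cleanly: SAMPLE only samples \emph{nearly} uniformly, OPT only solves the problem \emph{approximately}, and the volume computations of the fibers are themselves approximate. One must track how these errors compound — in particular, a $\pm\epsilon'$ error in the sampling distribution translates (via Theorem~\ref{thm:sample-complexity}) into at most an additive $O(\epsilon')$ perturbation of every halfspace measure, hence of $f_\mu$ uniformly, so choosing the internal accuracies polynomially smaller than $\epsilon$ suffices. A secondary subtlety is confirming that the hypotheses of Theorem~\ref{thm:timm-conv-min} (items (i)--(iii)) are genuinely met by the empirical depth function $f_{\mu'}$ on the sampled point set, which is essentially the argument already given in the proof of Theorem~\ref{thm:e-approx} and carries over verbatim.
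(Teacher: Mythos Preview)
Your overall strategy --- instantiate OPT and SAMPLE and invoke Theorem~\ref{thm:e-approx} --- is exactly the paper's approach, and your treatment of OPT via Theorem~\ref{thm:timm-conv-min} (the paper cites~\cite{GroetschelLovaszSchrijver-Book88} directly, but this is the same machinery) is fine.

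The genuine gap is in your SAMPLE procedure. You propose to enumerate ``the polynomially many integer fibers $\{z\}\times\R^d$''. But the number of such fibers is \emph{not} polynomially bounded in the bit-size of the input: if $P|_{\R^n}$ has width $B$ in each coordinate direction, there can be $\Theta(B^n)$ fibers, which is polynomial in $B$ but \emph{exponential} in $\log B$. The theorem asks for running time polynomial in $\log(\max\{|A_{i,j}|,|b_k|\})$, so fiber enumeration is too slow. (Barvinok's algorithm counts lattice points in fixed dimension in polynomial time, but it does not list them.)

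The paper handles this differently: for $n\ge 1$ it appeals to Igor Pak's sampling algorithm~\cite{pak2000sampling} for lattice points in a polytope (adapted from $d=0$ to $d\ge 1$), combined with polynomial-time mixed-integer volume computation in fixed dimension~\cite{baldoni2013intermediate}. These ingredients let one sample a fiber with the correct probability \emph{without} enumerating all fibers. Your discussion of error propagation is reasonable and would still be needed, but you first need a sampler whose running time depends only logarithmically on the coordinate magnitudes.
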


\begin{proof} By using classical results on maximizing quasi-concave functions over the integer points in a polyhedron~\cite{GroetschelLovaszSchrijver-Book88}, OPT can be implemented for the family $\mathcal{S}$ which is the collection of all sets $S$ that can be represented as the set of mixed-integer points in a rational polytope. For $n=0$, SAMPLE can be implemented for the uniform measure on polytopes using well-studied techniques, e.g., see Vempala's survey~\cite{vempala2010recent}. For $n\geq 1$, SAMPLE can be implemented for the uniform measure on mixed-integer points in a polytope by adapting a result of Igor Pak~\cite{pak2000sampling} on $d=0$ to $d\geq 1$ and using results on computing mixed-integer volumes in polynomial time for fixed dimensions~\cite{baldoni2013intermediate}.
\end{proof}



\section*{Acknowledgments} Parts of the results presented in this paper appear in the second author's dissertation~\cite{oertel2014integer} and in the Proceedings of IPCO 2016~\cite{basu2016centerpoints}. 

We also thank two anonymous referees whose pointers to the literature and numerous suggestions helped to improve the content and presentation of this paper.

\small
\bibliographystyle{plain}
\bibliography{../full-bib}

\end{document}